\documentclass[10pt]{amsart}
\usepackage{amsmath,amsfonts,amssymb,amsthm}
\usepackage{graphics}
\usepackage{epsfig,psfrag}

\voffset=5mm
\oddsidemargin=0pt \evensidemargin=0pt
\headheight=15pt     \topmargin=0pt
\textheight=620pt   \textwidth=460pt

\setlength{\parindent}{0cm}


\newtheoremstyle{thm}
  {9pt}{9pt}{\itshape}{}{\bfseries}{}{.5em}{}
\theoremstyle{thm}
\newtheorem{thm}{Theorem}
\newtheorem{cor}[thm]{Corollary}
\newtheorem{lemma}[thm]{Lemma}
\newtheorem{prop}[thm]{Proposition}
\newtheorem{conj}[thm]{Conjecture}

\newtheoremstyle{defin}
  {9pt}{9pt}{}{}{\bfseries}{}{.5em}{}
\theoremstyle{defin}

\newtheoremstyle{exm}
  {9pt}{9pt}{}{}{\scshape}{}{.5em}{}
\theoremstyle{exm}
\newtheorem*{exm}{Example}

\newtheoremstyle{proof}
  {}{}{}{}{\itshape}{:}{.5em}{}
\theoremstyle{proof}

\newcommand{\set}[1]{\{#1\}}

\newcommand{\N}{{\mathbb N}}

\renewcommand{\wp}{{\widetilde p}}
\newcommand{\bp}{{\bar p}}

\DeclareMathOperator{\syt}{SYT}
\DeclareMathOperator{\rib}{rib}
\DeclareMathOperator{\wt}{wt}
\DeclareMathOperator{\hgt}{ht}
\DeclareMathOperator{\jdt}{jdt}
\DeclareMathOperator{\hs}{hs}
\DeclareMathOperator{\vs}{vs}
\DeclareMathOperator{\br}{br}
\DeclareMathOperator{\sk}{sk}

\title{Skew quantum Murnaghan-Nakayama rule}
\author{Matja\v z Konvalinka}

\begin{document}

\begin{abstract}
 In this paper, we extend recent results of Assaf and McNamara on skew Pieri rule and skew Murnaghan-Nakayama rule to a more general identity, which gives an elegant expansion of the product of a skew Schur function with a quantum power sum function in terms of skew Schur functions. We give two proofs, one completely bijective in the spirit of Assaf-McNamara's original proof, and one via Lam-Lauve-Sotille's skew Littlewood-Richardson rule. We end with some conjectures for skew rules for Hall-Littlewood polynomials.
\end{abstract}

\maketitle

\section{Introduction}

Let us start with some basic definitions. A \emph{partition} $\lambda$ of $n$ is a sequence $(\lambda_1,\lambda_2,\ldots,\lambda_\ell)$ satisfying $\lambda_1 \geq \lambda_2 \geq \ldots \geq \lambda_\ell > 0$ and $\lambda_1+\lambda_2+\ldots+\lambda_\ell = n$; we use the notation $\lambda \vdash n$, $k = \ell(\lambda)$ (\emph{length} of $\lambda$), $n = |\lambda|$ (\emph{size} of $\lambda$), $\lambda_i = 0$ if $i > \ell(\lambda)$. We sometimes write $(\lambda_1^{k_1},\lambda_2^{k_2},\ldots)$ if $\lambda_1$ is repeated $k_1$ times, $\lambda_2 < \lambda_1$ is repeated $k_2$ times etc. The \emph{conjugate} partition of $\lambda$, denoted $\lambda^c$, is the partition $\mu = (\mu_1,\mu_2,\ldots,\mu_{\lambda_1})$ defined by $\mu_i = \max\set{j \colon \lambda_j \geq i}$. The \emph{Young diagram} $[\lambda]$ of a partition $\lambda$ is the set $\set{(i,j) \colon 1 \leq i \leq \ell(\lambda), 1 \leq j \leq \lambda_i}$. For partitions $\lambda,\mu$ we say that $\mu \subseteq \lambda$ if $\mu_i \leq \lambda_i$ for all $i$. If $\mu \subseteq \lambda$, the \emph{skew Young diagram} $[\lambda/\mu]$ of $\lambda/\mu$ is the set $\set{(i,j) \colon 1 \leq i \leq \ell(\lambda), \mu_i < j \leq \lambda_i}$. We denote $|\lambda| - |\mu|$ by $|\lambda/\mu|$. The elements of $[\lambda/\mu]$ are called \emph{cells}. We treat $\lambda$ and $\lambda/\emptyset$ as identical.

\medskip

We say that $\lambda/\mu$ is a \emph{horizontal strip} (respectively \emph{vertical strip}) if $[\lambda/\mu]$ contains no $2 \times 1$ (respectively $1 \times 2$) block, equivalently, if $\lambda_i^c \leq \mu_i^c + 1$ (respectively $\lambda_i \leq \mu_i+1$) for all $i$. We say that $\lambda/\mu$ is a \emph{ribbon} if $[\lambda/\mu]$ is connected and if it contains no $2 \times 2$ block, and that $\lambda/\mu$ is a \emph{broken ribbon} if $[\lambda/\mu]$ contains no $2 \times 2$ block, equivalently, if $\lambda_i \leq \mu_{i-1}+1$ for $i \geq 2$. The Young diagram of a broken ribbon is a disjoint union of $\rib(\lambda/\mu)$ number of ribbons. The \emph{height} $\hgt(\lambda/\mu)$ (respectively \emph{width} $\wt(\lambda/\mu)$) of a ribbon is the number of non-empty rows (respectively columns) of $[\lambda/\mu]$, minus $1$. The height (respectively width) of a broken ribbon is the sum of heights (respectively widths) of the components. Clearly, $\lambda/\mu$ is a horizontal (respectively vertical) strip if and only if it is a broken ribbon of height (respectively width) $0$. Figure \ref{fig1} shows examples of a horizontal strip, vertical strip, ribbon (with $\hgt(\lambda/\mu) = 8$ and $\wt(\lambda/\mu) = 7$) and broken ribbon (with $\hgt(\lambda/\mu) = 6$, $\wt(\lambda/\mu) = 6$ and $\rib(\lambda/\mu) = 3$).

\medskip

\begin{figure}[ht!]
\begin{center}
 \includegraphics[width = 0.5\textwidth]{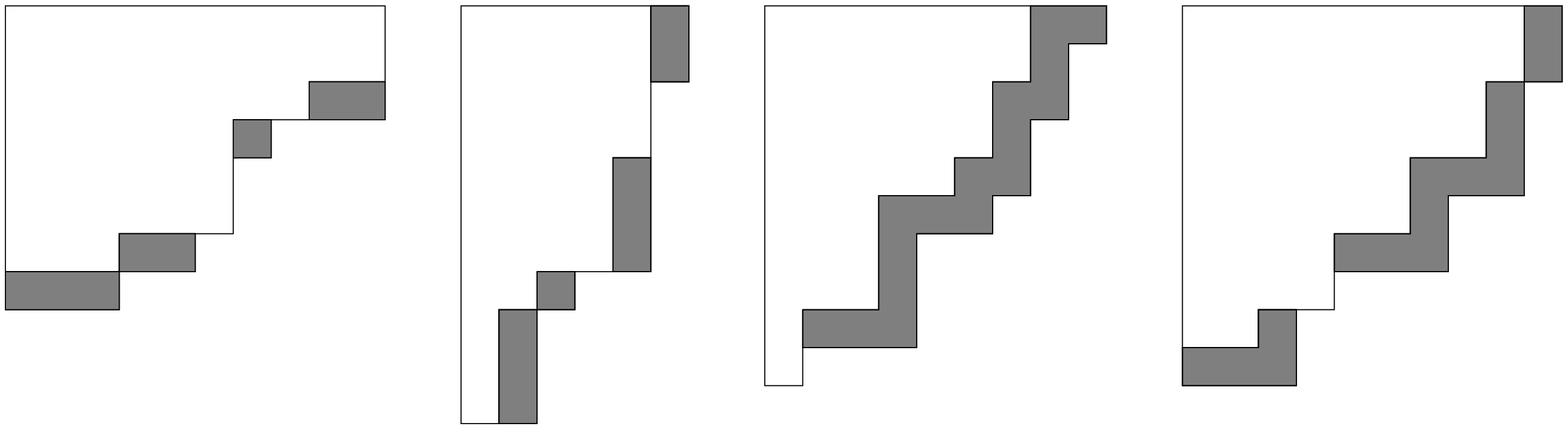}
\end{center}
\caption{}\label{fig1}\end{figure}

A map $T \colon [\lambda/\mu] \to \N$ is called a \emph{skew semistandard Young tableau of shape $\lambda/\mu$} if $T(i,j_1) \leq T(i,j_2)$ for $j_1 < j_2$, and $T(i_1,j) < T(i_2,j)$ for $i_1 < i_2$. If $T$ is a skew semistandard Young tableau, we denote by $t_i(T)$ the number of cells that map to $i$. Define the \emph{skew Schur function}
$$s_{\lambda/\mu} = \sum_T x_1^{t_1(T)} x_2^{t_2(T)} \cdots,$$
where the sum is over all semistandard Young tableaux of shape $\lambda/\mu$. A skew Schur function is a formal power series in $x_1,x_2,\ldots$, and it is easy to see that it is a symmetric function. Moreover, the set of \emph{Schur functions} $\set{s_\lambda \colon \lambda \mbox{ partition}}$ is a basis of the space of symmetric functions. For more details, and for some of the amazing properties of Schur functions, see \cite[\S 7]{stanley}.

\medskip

There are several other bases of the space of symmetric functions. For the purposes of this paper, the most important one is the \emph{power sum basis} $\set{p_\lambda \colon \lambda \mbox{ partition}}$, defined by
$$p_r = x_1^r + x_2^r + \ldots,$$
$$p_\lambda = p_{\lambda_1} p_{\lambda_2} \cdots p_{\lambda_\ell}.$$
Let us also mention the \emph{monomial basis} $\set{m_\lambda \colon \lambda \mbox{ partition}}$, defined by
$$m_\lambda = \sum x_{\pi(1)}^{\lambda_1} \cdots x_{\pi(\ell)}^{\lambda_\ell},$$
where the sum is over all injective maps $\pi \colon \set{1,\ldots,\ell} \to \N$.

\medskip

The product of Schur functions can be (uniquely) expressed as a linear combination of Schur functions:
$$s_\lambda s_\mu = \sum c_{\lambda,\mu}^\nu s_\nu.$$
The coefficients $c_{\lambda,\mu}^\nu$ are called \emph{Littlewood-Richardson coefficients} and can be computed using the celebrated \emph{Littlewood-Richardson rule}, see \cite[Appendix A1.3]{stanley}. This rule is quite complicated, but it is very simple if $\mu$ has only one row or column. Namely, we have the \emph{Pieri rule}:
\begin{equation} \label{pieri}
 s_\lambda s_r = \sum s_{\lambda^+},
\end{equation}
where the sum on the right is over all $\lambda^+$ such that $\lambda^+/\lambda$ is a horizontal strip of size $r$. Similarly, the \emph{conjugate Pieri rule} says that
\begin{equation} \label{conjpieri}
 s_\lambda s_{1^r} = \sum s_{\lambda^+},
\end{equation}
where the sum on the right is over all $\lambda^+$ such that $\lambda^+/\lambda$ is a vertical strip of size $r$.

\medskip

We also have a rule for the product of a Schur function with a power sum symmetric function, the \emph{Murnaghan-Nakayama rule}:
\begin{equation} \label{mnr}
 s_{\lambda} p_r = \sum (-1)^{\hgt(\lambda^+/\lambda)} s_{\lambda^+},
\end{equation}
where the sum on the right is over all $\lambda^+$ such that $\lambda^+/\lambda$ is a ribbon of size $r$. See \cite[Theorem 7.15.7]{stanley}.

\medskip

In \cite{amcn} and \cite{slides}, Assaf and McNamara found a beautiful extension of both Pieri rule and Murnaghan-Nakayama rule.

\begin{thm}[Skew Pieri Rule -- SPR]
 For any partitions $\lambda,\mu$, $\mu \subseteq \lambda$, we have
 $$s_{\lambda/\mu} \cdot s_r = \sum_j (-1)^j \sum s_{\lambda^+/\mu^-},$$
 where the inner sum on the right is over all $\lambda^+,\mu^-$ such that $\lambda^+/\lambda$ is a horizontal strip of size $r-j$, and $\mu/\mu^-$ is a vertical strip of size $j$.
\end{thm}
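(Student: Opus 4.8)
The plan is to deduce the Skew Pieri Rule (SPR) from a single operator identity on the ring $\Lambda$ of symmetric functions and then verify that identity by a short generating-function computation, in a Hopf-algebraic style. Write $h_r=s_{(r)}$, $e_r=s_{(1^r)}$, and for $f\in\Lambda$ let $f^\perp$ be the skewing operator adjoint to multiplication by $f$ with respect to the Hall inner product $\langle\cdot,\cdot\rangle$, so that $s_\mu^\perp s_\lambda=s_{\lambda/\mu}$ (see \cite{stanley}). The identity I am after is
\[
 s_\mu\cdot(h_r^\perp g)\;=\;\sum_{j=0}^{r}(-1)^j\,h_{r-j}^\perp\!\big((e_j^\perp s_\mu)\cdot g\big)\qquad(g\in\Lambda,\ r\ge 0).\qquad(\star)
\]
Granting $(\star)$, SPR follows by substituting $g=s_\rho$ and pairing both sides with $s_\lambda$: the left side is $\langle s_\lambda,s_\mu(s_r^\perp s_\rho)\rangle=\langle s_{\lambda/\mu}s_r,s_\rho\rangle$ by adjointness (applied twice); on the right side, the conjugate Pieri rule \eqref{conjpieri} gives $e_j^\perp s_\mu=\sum_{\mu^-}s_{\mu^-}$ (over $\mu^-$ with $\mu/\mu^-$ a vertical strip of size $j$), adjointness gives $\langle s_\lambda h_{r-j},\,\cdot\,\rangle=\langle s_\lambda,h_{r-j}^\perp(\,\cdot\,)\rangle$, and the Pieri rule \eqref{pieri} gives $s_\lambda h_{r-j}=\sum_{\lambda^+}s_{\lambda^+}$ (over $\lambda^+$ with $\lambda^+/\lambda$ a horizontal strip of size $r-j$), so the right side becomes $\big\langle\sum_j(-1)^j\sum_{\lambda^+,\mu^-}s_{\lambda^+/\mu^-},\,s_\rho\big\rangle$. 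As this holds for every $\rho$, SPR drops out.

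To prove $(\star)$, introduce $H^\perp(t)=\sum_{n\ge0}h_n^\perp t^n$ and $E^\perp(t)=\sum_{n\ge0}e_n^\perp t^n$ as operators on $\Lambda[[t]]$, and use three standard facts: (i) from $\Delta h_n=\sum_{i+j=n}h_i\otimes h_j$ one obtains the co-Leibniz rule $h_n^\perp(xy)=\sum_{i+j=n}(h_i^\perp x)(h_j^\perp y)$, equivalently $H^\perp(t)(xy)=\big(H^\perp(t)x\big)\big(H^\perp(t)y\big)$; (ii) since $(ab)^\perp=a^\perp b^\perp$ as operators, the fundamental relation $H(t)E(-t)=1$ (with $H(t)=\sum h_nt^n$, $E(t)=\sum e_nt^n$) gives $H^\perp(t)\circ E^\perp(-t)=\mathrm{id}$; (iii) multiplication by a fixed element is linear. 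Multiplying $(\star)$ by $t^r$ and summing over $r$, the left side becomes $s_\mu\cdot(H^\perp(t)g)$; the right side, after reindexing with $s=r-j$, becomes $\sum_j(-t)^j H^\perp(t)\big((e_j^\perp s_\mu)g\big)$, which by (i) equals $\big(\sum_j(-t)^j H^\perp(t)e_j^\perp s_\mu\big)\big(H^\perp(t)g\big)=\big(H^\perp(t)E^\perp(-t)s_\mu\big)\big(H^\perp(t)g\big)$, and this equals $s_\mu\cdot(H^\perp(t)g)$ by (ii). Comparing coefficients of $t^r$ yields $(\star)$. Nothing used that the first factor is $s_\mu$, so in fact $F\cdot(h_r^\perp g)=\sum_j(-1)^j h_{r-j}^\perp\big((e_j^\perp F)g\big)$ for every $F\in\Lambda$; this operator identity is the real content, and it is essentially the antipode relation $H(t)E(-t)=1$ in disguise.

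I expect the main obstacle to be the bookkeeping in the first paragraph rather than any hard estimate: one must match the outer horizontal strip $\lambda^+/\lambda$ to the Pieri rule and the inner vertical strip $\mu/\mu^-$ to the conjugate Pieri rule, keep the sign $(-1)^j$ attached to the vertical strip, and check that passing to arbitrary $\lambda$ and $\rho$ is legitimate. I would also give, in the spirit of Assaf--McNamara's original proof, a self-contained bijective version: expand both sides of SPR as signed sums of monomial weights of semistandard Young tableaux of the various shapes $\lambda^+/\mu^-$, and build a weight-preserving, sign-reversing involution whose fixed points are precisely the tableaux with $\mu^-=\mu$, which are enumerated by $s_{\lambda/\mu}h_r$. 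The genuine difficulty there is the involution itself: a filling of the outer horizontal strip $[\lambda^+/\lambda]$ need not read off (by columns or by rows) as a weakly increasing word, so the cancellation cannot come from a naive ``detach the strip'' map and must be organized around a first cell at which the tableau violates a suitably chosen condition.
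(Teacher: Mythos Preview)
Your algebraic argument is correct and complete: the operator identity $(\star)$ is exactly right, your generating-function derivation of it from $H^\perp(t)E^\perp(-t)=\mathrm{id}$ and the multiplicativity of $H^\perp(t)$ is clean, and the deduction of SPR by pairing with $s_\rho$ and applying adjointness twice is fine. This is essentially the short Hopf-algebraic proof due to Lam (appended to \cite{amcn}) and close in spirit to the skew Littlewood--Richardson approach of \cite{lls} that the paper invokes in Section~\ref{another}.

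It is, however, a genuinely different route from the one the paper presents. The paper's proof of SPR (Section~\ref{amcn}) is the Assaf--McNamara sign-reversing involution $\Phi$: one iterates \emph{reverse} row insertion from the top row of $\lambda^+/\lambda$, recording the exiting integers in a word $v$, until a reverse insertion fails to exit in row~$0$; one then compares that exit row with the top row of $\mu/\mu^-$ to decide whether to perform one more reverse insertion or instead an internal insertion from the inner boundary, and finally re-inserts $v$. Fixed points are exactly the tableaux with $\mu^-=\mu$, matched with pairs $(S,v)$ counting $s_{\lambda/\mu}h_r$. The content is Lemma~\ref{insertion} (insertion paths do not cross), which makes $\Phi$ an involution. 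Your final paragraph correctly identifies that the naive ``detach the strip'' map fails, but the resolution is not a ``first violating cell'' argument: it is precisely this repeated reverse-bumping together with the row-comparison rule.

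What each approach buys: your operator proof is short, conceptual, and immediately gives the stronger statement with an arbitrary $F\in\Lambda$ in place of $s_\mu$; the paper's involution is longer but entirely bijective, and---more to the point for this paper---it is the template that is extended (via the companion involution $\Psi$ and the white/gray colorings of Section~\ref{inv}) to a bijective proof of the main theorem SQMNR. Your proof would slot naturally alongside Section~\ref{another} as an alternative algebraic argument, but it does not replace the combinatorial machinery the paper needs for its main result.
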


The skew Pieri rule has a dual, conjugate equivalent.

\begin{cor}[Conjugate skew Pieri rule -- CSPR]
 For any partitions $\lambda,\mu$, $\mu \subseteq \lambda$, we have
 $$s_{\lambda/\mu} \cdot s_{1^r} = \sum_j (-1)^j \sum s_{\lambda^+/\mu^-},$$
 where the inner sum on the right is over all $\lambda^+,\mu^-$ such that $\lambda^+/\lambda$ is a vertical strip of size $r-j$, and $\mu/\mu^-$ is a horizontal strip of size $j$
\end{cor}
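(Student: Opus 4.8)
The plan is to obtain CSPR from SPR by applying the standard involution $\omega$ on the ring of symmetric functions. Recall that $\omega$ is an algebra automorphism with $\omega(s_\nu) = s_{\nu^c}$ for every partition $\nu$, that it satisfies the more general identity $\omega(s_{\alpha/\beta}) = s_{\alpha^c/\beta^c}$ for any skew shape, and that in particular $\omega(s_r) = s_{(r)^c} = s_{1^r}$ (see \cite{stanley}). These facts are classical, so this step amounts to quoting them.

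Next I would apply SPR to the conjugate partitions $\lambda^c$ and $\mu^c$; note $\mu^c \subseteq \lambda^c$ because $\mu \subseteq \lambda$, so this is legitimate. This gives
$$s_{\lambda^c/\mu^c}\cdot s_r \;=\; \sum_j (-1)^j \sum s_{\nu/\kappa},$$
where the inner sum runs over all $\nu,\kappa$ with $\nu/\lambda^c$ a horizontal strip of size $r-j$ and $\mu^c/\kappa$ a vertical strip of size $j$. Applying $\omega$ to both sides and using that it is multiplicative together with $\omega(s_{\alpha/\beta}) = s_{\alpha^c/\beta^c}$ and $\omega(s_r) = s_{1^r}$ yields
$$s_{\lambda/\mu}\cdot s_{1^r} \;=\; \sum_j (-1)^j \sum s_{\nu^c/\kappa^c}.$$

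It then remains only to re-express the summation conditions after conjugating the shapes. Setting $\lambda^+ = \nu^c$ and $\mu^- = \kappa^c$, the assignment $(\nu,\kappa) \mapsto (\lambda^+,\mu^-)$ is a bijection because conjugation is an involution on partitions that preserves size and containment. Using the reformulations recorded in the introduction — $\alpha/\beta$ is a horizontal strip iff $\alpha_i^c \le \beta_i^c + 1$ for all $i$, which is exactly the condition for $\alpha^c/\beta^c$ to be a vertical strip, and symmetrically for the other case — the condition ``$\nu/\lambda^c$ is a horizontal strip of size $r-j$'' is equivalent to ``$\lambda^+/\lambda$ is a vertical strip of size $r-j$'', and ``$\mu^c/\kappa$ is a vertical strip of size $j$'' is equivalent to ``$\mu/\mu^-$ is a horizontal strip of size $j$''. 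Substituting produces exactly the claimed identity. I do not expect any genuine obstacle here; the only care needed is the bookkeeping with conjugate partitions and invoking the (standard) fact that $\omega$ conjugates the shape of a skew Schur function rather than reproving it.
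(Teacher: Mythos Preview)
Your proposal is correct and matches the paper's own argument: immediately after stating CSPR the paper remarks that it follows from SPR by applying the involution $\omega$, which sends $s_{\lambda/\mu}$ to $s_{\lambda^c/\mu^c}$ and is multiplicative. Your write-up simply spells out this one-line deduction in full, including the bookkeeping that horizontal and vertical strips swap under conjugation; the paper also later gives an alternative direct bijective proof via an involution $\Psi$, but that is presented as a variant rather than the primary proof of the corollary.
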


CSPR can be proved from SPR via the involution $\omega$ on the algebra of symmetric functions, which maps $s_{\lambda/\mu}$ to $s_{\lambda^c/\mu^c}$ and preserves the product. See \cite[\S 7.6 and \S 7.14]{stanley} for details.

\begin{thm}[Skew Murnaghan-Nakayama Rule -- SMNR]
 For any partitions $\lambda,\mu$, $\mu \subseteq \lambda$, we have
 $$s_{\lambda/\mu} \cdot p_r = \sum (-1)^{\hgt(\lambda^+/\lambda)}s_{\lambda^+/\mu} - \sum (-1)^{\hgt(\mu/\mu^-)} s_{\lambda/\mu^-},$$
 where the first (respectively second) sum on the right is over all $\lambda^+$ (respectively $\mu^-$) such that $\lambda^+/\lambda$ (respectively $\mu/\mu^-$) is a ribbon of size $r$.
\end{thm}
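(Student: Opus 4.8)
The plan is to deduce the Skew Murnaghan–Nakayama Rule from the Skew Pieri Rule and its conjugate, exactly as the classical Murnaghan–Nakayama rule is related to the Pieri rules via the Jacobi–Trudi / Newton identities. Recall the Newton identity expressing a single power sum in terms of complete homogeneous and elementary symmetric functions. In the standard form it reads $p_r = \sum_{i=0}^{r-1}(-1)^i h_{r-i} e_i \cdot(\text{sign correction})$; more precisely one has $p_r = \sum_{i+j=r}(-1)^{i}\,(\text{coefficient})\,s_{(j,1^i)}$, the well-known expansion of $p_r$ into hook Schur functions: $p_r = \sum_{i=0}^{r-1}(-1)^i s_{(r-i,1^i)}$. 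So the first step is to multiply $s_{\lambda/\mu}$ by this hook expansion and apply SPR and CSPR to each term $s_{\lambda/\mu}\cdot s_{(r-i,1^i)}$ — but a hook is neither a single row nor a single column, so we first need a skew Pieri-type rule for multiplication by a hook Schur function $s_{(a,1^b)}$.

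The second step, therefore, is to obtain the needed ``skew hook Pieri rule'' by iterating SPR once with CSPR: since $s_{(a,1^b)} = s_a s_{1^b} - s_{a+1}s_{1^{b-1}}$ (the two-term Jacobi–Trudi expansion of a hook), we can expand $s_{\lambda/\mu}\,s_{(a,1^b)}$ as a difference of two products, to each of which SPR or CSPR (composed appropriately — multiply by $s_a$ then by $s_{1^b}$, tracking horizontal strips being added to $\lambda$, vertical strips removed from $\mu$, then vertical strips added and horizontal strips removed) applies. Substituting all of this into the hook expansion of $p_r$ gives a large alternating sum over quadruples $(\lambda^{++},\lambda^{+},\mu^{-},\mu^{--})$ with a product of four strips (two added at the $\lambda$ end, two removed at the $\mu$ end) and a sign depending on all the intermediate sizes.

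The third step is the combinatorial heart: a sign-reversing involution collapsing this sum. Following the philosophy of Assaf–McNamara, most terms will cancel in pairs — the involution will toggle the last cell of some strip between the ``horizontal/added'' regime and the ``vertical/removed'' regime, or merge/split two adjacent strips — and the surviving fixed points will be exactly the configurations where the two strips at the $\lambda$ end fuse into a single ribbon $\lambda^+/\lambda$ of size $r$ (with height accounting for the signs, giving $(-1)^{\hgt(\lambda^+/\lambda)}$ and nothing removed at the $\mu$ end), together with the configurations where the two strips at the $\mu$ end fuse into a single ribbon $\mu/\mu^-$ of size $r$ (contributing $-(-1)^{\hgt(\mu/\mu^-)}$ and nothing added at the $\lambda$ end). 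The alternative, cleaner route — which I would actually prefer to present — is to invoke the Lam–Lauve–Sottile skew Littlewood–Richardson rule, which expands $s_{\lambda/\mu}\cdot s_{\nu/\kappa}$ directly; specializing $\nu/\kappa$ to be a single ribbon and using the classical Murnaghan–Nakayama expansion $p_r = \sum_\nu (-1)^{\hgt(\nu)} s_\nu$ (sum over ribbons $\nu$ of size $r$) turns the whole computation into a bookkeeping exercise on Littlewood–Richardson-type coefficients.

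I expect the main obstacle to be the sign analysis in the involution: keeping track of how $\hgt$ of a fused ribbon relates to the heights/widths of its two pieces and to the intermediate partition sizes, and checking that the involution is well-defined precisely off the set of fixed points (in particular that a strip added to $\lambda$ never interacts with a strip removed from $\mu$, so the two ends of the skew shape decouple). Verifying that decoupling — i.e. that no cancellation crosses between the $\lambda^+$-side terms and the $\mu^-$-side terms — is the delicate point, and it is exactly where the two separate sums in the statement come from.
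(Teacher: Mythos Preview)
Your alternative route via the Lam--Lauve--Sottile skew Littlewood--Richardson rule is essentially the paper's own argument (Section~\ref{another}): one expands $p_r$ as $\sum_{k=1}^{r}(-1)^{r-k}s_{(k,1^{r-k})}$, applies SLRR, and then must count the standard Young tableaux of shape $(\mu/\mu^-)^c * (\lambda^+/\lambda)$ that rectify to the canonical hook tableau. You call this last step ``bookkeeping'', but it is the substantive part: the paper introduces the $k$-NE property and proves (Lemmas~\ref{lemma3} and~\ref{lemma2}) that it is preserved under jeu de taquin slides, which forces the shape to be a broken ribbon and makes the count a binomial coefficient. At $q=1$ that binomial coefficient collapses and one recovers SMNR. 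So this route works, but it is not a triviality.

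Your primary route has a concrete error and a deeper problem. The concrete error: the identity $s_{(a,1^b)} = s_a s_{1^b} - s_{a+1}s_{1^{b-1}}$ is false for $b\ge 2$ (e.g.\ $s_2 s_{11} - s_3 s_1 = s_{211}-s_4\neq s_{211}$); what is true is the telescoping sum $s_{(a,1^b)}=\sum_{j=0}^{b}(-1)^{j}h_{a+j}e_{b-j}$, so you would be iterating SPR/CSPR many times, not twice, and the intermediate configurations are more tangled than a ``quadruple of strips''. The deeper problem: the sign-reversing involution you sketch --- toggling a boundary cell or merging/splitting strips so that only single-ribbon configurations survive --- is exactly the open problem the paper highlights in Section~\ref{final}. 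The paper's bijective machinery proves the identity SQMNR$''$ (which is $(1-q)$ times the quantum rule), and setting $q=1$ there yields $0=0$; extracting SMNR bijectively would require avoiding the $0$-weight objects in the involution, which the paper could not do. Your ``decoupling'' worry is real, and you have not given a mechanism for it; in particular there is no reason the cancellations should respect the $\lambda^+$-side/$\mu^-$-side dichotomy before one has already identified the ribbon structure.

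In short: the paper derives SMNR as the $q=1$ specialization of the skew \emph{quantum} Murnaghan--Nakayama rule, proved either bijectively (Section~\ref{inv}) or via SLRR (Section~\ref{another}); your LLS alternative is the latter, modulo the nontrivial rectification count. Your SPR+CSPR+involution plan, as written, rests on a false identity and an involution that remains open.
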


\begin{exm}
 By SPR, we have
 $$s_{322/11} \cdot s_2 = s_{522/11} + s_{432/11} + s_{4221/11} + s_{3321/11} + s_{3222/11} - s_{422/1} - s_{332/1} - s_{3221/1} + s_{322},$$
 as shown by Figure \ref{fig12}.

 \begin{figure}[ht!]
 \begin{center}
  \includegraphics[height = 1.6cm]{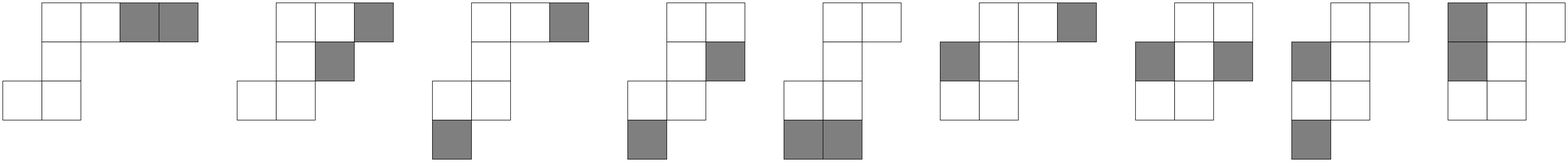}
 \end{center}
 \caption{}\label{fig12}
 \end{figure}

 By CSPR, we have
 $$s_{322/11} \cdot s_{11} = s_{432/11} + s_{4221/11} + s_{333/11} + s_{3321/11} + s_{32211/11} - s_{422/1} - s_{332/1} - s_{3221/1},$$
 as shown by Figure \ref{fig15}.

 \begin{figure}[ht!]
 \begin{center}
  \includegraphics[height = 1.6cm]{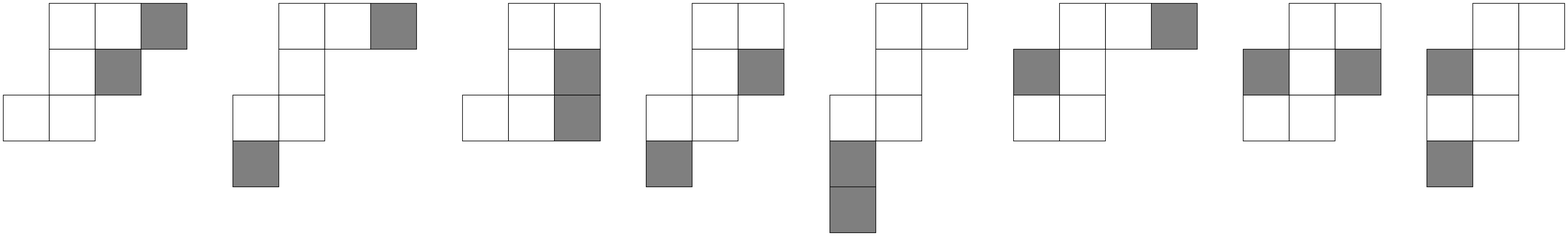}
 \end{center}
 \caption{}\label{fig15}
 \end{figure}

 By SMNR, we have
 $$s_{433/22} \cdot p_3 = s_{733/22} - s_{553/22} + s_{4333/22} - s_{43321/22} + s_{433111/22} + s_{433/1},$$
 as shown by Figure \ref{fig13}.

 \begin{figure}[ht!]
 \begin{center}
  \includegraphics[height = 2.4cm]{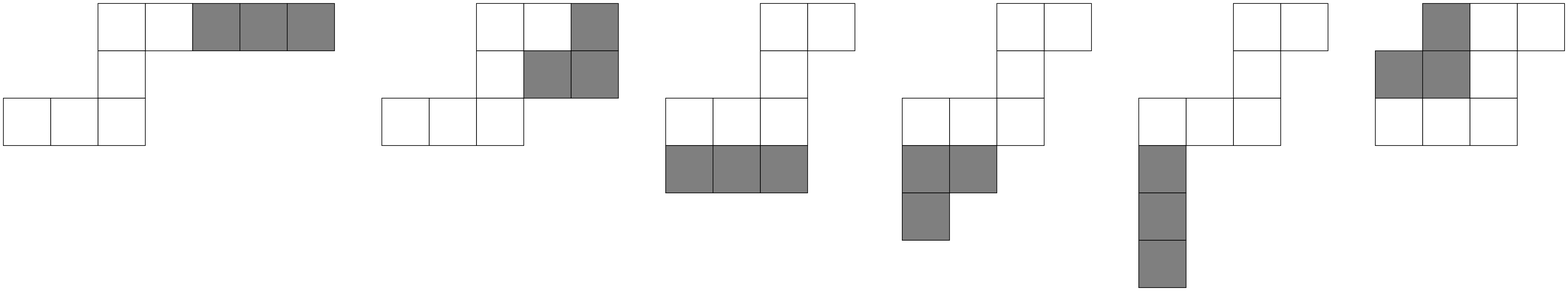}
 \end{center}
 \caption{}\label{fig13}
 \end{figure}

\end{exm}

Note that while the Pieri rule and the Murnaghan-Nakayama rule give the expansion in terms of a basis, their skew versions give only one possible (but obviously special) expansion in terms of skew Schur functions, which are not a basis of the space of symmetric functions.

\medskip

Assaf and McNamara provide an elegant bijective proof of their skew Pieri rule (but not of the skew Murnaghan-Nakayama rule; see Section \ref{final}). We describe this rule in detail in Section \ref{amcn} since an extension of it proves our main result.

\medskip

Define \emph{quantum power sum symmetric functions} by
$$\wp_r = \sum_{\tau \vdash r} (-1)^{\ell(\tau)-1}(q-1)^{\ell(\tau)-1} m_\tau,$$
$$\wp_\mu = \wp_{\mu_1} \wp_{\mu_2} \cdots.$$
For example,
$$\wp_4 = m_4 - (q-1) m_{31} - (q-1) m_{22} + (q-1)^2 m_{211} - (q-1)^3 m_{1111}$$
and
$$\wp_{22} = m_4 - 2(q-1)m_{31} + (q^2-2q+3) m_{22} + 2(q-1)(q-2) m_{211} + 6 (q-1)^2 m_{1111}.$$

The functions $\wp_\mu$ have connections with representation theory (more precisely, characters of the Hecke algebra of type A; see for example \cite[Theorem 6.5.3]{thesis}).

\medskip

We have
$$\wp_r|_{q = 1} = m_r = p_r, \qquad  \wp_r|_{q = 0} = \sum_{\tau \vdash r} m_\tau = s_r, \qquad \lim_{q \to \infty} \frac{\wp_r}{q^{r-1}} = (-1)^{r-1} m_{1^r} = (-1)^{r-1} s_{1^r}.$$

There exists a natural generalization of the Murnaghan-Nakayama rule, the \emph{quantum Murnaghan-Naka\-yama rule} (QMNR):
$$s_{\lambda} \cdot \wp_r = (-1)^{r+1} \sum_{\lambda^+} (-1)^{\wt(\lambda^+/\lambda)} q^{\hgt(\lambda^+/\lambda)} (q-1)^{\rib(\lambda^+/\lambda)-1} s_{\lambda^+},$$
where the internal sum on the right is over $\lambda^+$ such that $\lambda^+/\lambda$ is a broken ribbon of size $r$. See for example \cite[Theorem 6.5.2]{thesis} for a slightly different version.

\medskip

The following is our main result, the skew quantum Murnaghan-Nakayama rule.

\begin{thm}[SQMNR]
 For partitions $\lambda,\mu$, $\mu \subseteq \lambda$, and $r \geq 0$, we have
 $$s_{\lambda/\mu} \cdot \wp_r = \sum_{j=0}^r (-1)^{r+1-j} \! \sum_{\lambda^+,\mu^-} (-1)^{\wt(\lambda^+/\lambda)+\hgt(\mu/\mu^-)} q^{\hgt(\lambda^+/\lambda)+\wt(\mu/\mu^-)} (q-1)^{\rib(\lambda^+/\lambda)+\rib(\mu/\mu^-)-1} s_{\lambda^+/\mu^-},$$
 where the internal sum on the right is over $\lambda^+,\mu^-$ such that $\lambda^+/\lambda$ is a broken ribbon of size $r-j$, and $\mu/\mu^-$ is a broken ribbon of size $j$.
\end{thm}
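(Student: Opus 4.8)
The plan is to derive SQMNR from the skew Pieri rule and its conjugate, after first rewriting $\wp_r$ in the Schur basis. The key preliminary identity is the hook expansion $\wp_r=\sum_{i=0}^{r-1}(-q)^i s_{(r-i,1^i)}$, which follows from the generating‑function identity $\sum_{r\geq1}\wp_r u^r=(q-1)^{-1}\bigl(1-H(u)E(-qu)\bigr)$, where $H(u)=\sum_n h_nu^n$ and $E(u)=\sum_n e_nu^n$; this in turn is immediate from the monomial‑basis definition of $\wp_r$. Expanding each hook via the Jacobi--Trudi identity $s_{(a,1^b)}=\sum_{k=0}^b(-1)^kh_{a+k}e_{b-k}$, interchanging the sums and simplifying a finite geometric series, one obtains the compact form
$$s_{\lambda/\mu}\cdot\wp_r=\sum_{a=1}^{r}(-q)^{r-a}\,[a]_q\;s_{\lambda/\mu}\cdot s_a\cdot s_{1^{r-a}},\qquad [a]_q:=1+q+\cdots+q^{a-1},$$
valid for $r\geq1$ (the case $r=0$ is trivial).

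Next I would apply SPR to $s_{\lambda/\mu}\cdot s_a$ and then CSPR to each resulting $s_{\lambda^+/\mu^-}\cdot s_{1^{r-a}}$. This writes $s_{\lambda/\mu}\cdot\wp_r$ as a signed sum of skew Schur functions $s_{\Lambda/M}$ indexed by chains $\lambda\subseteq\lambda^+\subseteq\Lambda$ and $M\subseteq\mu^-\subseteq\mu$ in which $\lambda^+/\lambda$ is a horizontal strip, $\Lambda/\lambda^+$ a vertical strip, $\mu/\mu^-$ a vertical strip and $\mu^-/M$ a horizontal strip, the four ranks being constrained so that $|\Lambda/\lambda|+|\mu/M|=r$. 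Passing to the Schur basis and collecting, for fixed outer data $(\lambda\subseteq\Lambda,\ M\subseteq\mu)$, all terms over the intermediate shapes $\lambda^+,\mu^-$, the $a$‑sum and the rank‑sums telescope (again through $[a]_q=(q^a-1)/(q-1)$) to
$$\frac{-\,U(\Lambda/\lambda)\cdot W(\mu/M)}{q-1},$$
where $U(\theta)$ is the sum of $(-q)^{|\Lambda/\lambda^+|}$ over the ways of cutting $\theta=\Lambda/\lambda$ into an inner horizontal strip $\lambda^+/\lambda$ and an outer vertical strip $\Lambda/\lambda^+$, and $W(\eta)$ is the sum of $(-1)^{|\mu/\mu^-|}q^{|\mu^-/M|}$ over the ways of cutting $\eta=\mu/M$ into an outer vertical strip $\mu/\mu^-$ and an inner horizontal strip $\mu^-/M$. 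The other half of the telescoped expression, a term proportional to $q^r$, drops out: by the identity $H(u)E(-u)=1$ the relevant signed count is supported on $\Lambda=\lambda$ and $M=\mu$, hence on $r=0$.

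The heart of the argument, and the step I expect to be the main obstacle, is the evaluation of these two signed strip counts:
$$U(\theta)=\begin{cases}(-1)^{|\theta|+\wt(\theta)}\,q^{\hgt(\theta)}\,(q-1)^{\rib(\theta)}&\theta\text{ a broken ribbon,}\\[2pt]0&\text{otherwise,}\end{cases}\qquad W(\eta)=\begin{cases}(-1)^{\hgt(\eta)}\,q^{\wt(\eta)}\,(q-1)^{\rib(\eta)}&\eta\text{ a broken ribbon,}\\[2pt]0&\text{otherwise.}\end{cases}$$
The vanishing statements are short: a $2\times2$ block forces a contradiction with the requirement that the intermediate shape be a partition. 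For the nonzero values, one notes that both sums factor over the ribbon components of a broken ribbon, and then treats a single ribbon by induction on the number of cells, peeling off a corner cell which extends the ribbon either rightward or downward; this is exactly the local analysis behind the classical passage from the Pieri rule to the Murnaghan--Nakayama rule, now carried out on both the $\lambda$‑side and the $\mu$‑side and refined to track $q$. Substituting these evaluations, $-U(\theta)W(\eta)/(q-1)$ becomes precisely the SQMNR coefficient: with $j=|\eta|$ the sign $(-1)^{r+1-j}$ equals $(-1)^{|\theta|+1}$, the $q$‑exponents combine to $\hgt(\theta)+\wt(\eta)$, the remaining sign to $\wt(\theta)+\hgt(\eta)$, and the two powers of $q-1$ merge into $(q-1)^{\rib(\theta)+\rib(\eta)-1}$; since the two sides then have identical Schur expansions, the identity follows. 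Alternatively, the SPR/CSPR step can be replaced by a single application of the Lam--Lauve--Sottile skew Littlewood--Richardson rule to $s_{\lambda/\mu}\cdot s_{(r-i,1^i)}$, followed by the same $q$‑series collapse.
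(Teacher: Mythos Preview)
Your argument is correct, and its route is genuinely different from both proofs in the paper. The paper's first proof is bijective: it interprets the right-hand side via colored tableaux (where a broken ribbon is split into a horizontal and a vertical strip) and then cancels terms using the two sign-reversing involutions $\Psi$ and $\Phi$ from the Assaf--McNamara machinery. The paper's second proof also starts from the hook expansion $\wp_r=\sum_k(-q)^{r-k}s_{k,1^{r-k}}$, but then applies the skew Littlewood--Richardson rule directly and counts standard Young tableaux of broken-ribbon shape that rectify to a fixed hook, via a ``$k$-NE property'' preserved under jeu de taquin.

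Your approach sits between these: like the second proof you begin with the hook expansion, but you avoid SLRR entirely by rewriting hooks through $s_{(a,1^b)}=\sum_k(-1)^k h_{a+k}e_{b-k}$, arriving at $\wp_r=\sum_a(-q)^{r-a}[a]_q\,h_a e_{r-a}$ and then invoking only SPR and CSPR. Your evaluation of the strip sums $U(\theta)$ and $W(\eta)$ is essentially the same combinatorial fact that the paper proves as its colored-tableau lemma (the number of ways to split a broken ribbon into an inner horizontal strip and an outer vertical strip, weighted by $(-q)^{\text{vertical part}}$), so you recover that lemma's content without the tableaux language. The payoff of your route is that it uses only the elementary SPR and CSPR, not SLRR or jeu de taquin; the payoff of the paper's bijective proof is an explicit involution, and of its SLRR proof a direct link to rectification of hook tableaux. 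Two small remarks: your phrase ``passing to the Schur basis'' is misleading---you are collecting by skew shape $(\Lambda,M)$, not by straight Schur functions, and that termwise matching is exactly what SQMNR asserts; and the vanishing of the $q^r$ piece is really the pointwise fact $\sum_{\lambda^+}(-1)^{|\Lambda/\lambda^+|}=[\Lambda=\lambda]$ (and its $\mu$-analogue), which is your $U$ and $W$ specialized at $q=1$, rather than the global identity $H(u)E(-u)=1$. Your closing ``alternatively'' is precisely the paper's second proof.
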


There is another version of the statement that will be slightly more useful for our purposes.

\begin{thm}[SQMNR']
 For partitions $\lambda,\mu$, $\mu \subseteq \lambda$, and $r \geq 0$, we have
 $$s_{\lambda/\mu} \cdot \wp_r = \sum_{\lambda^+,\mu^-} (-1)^{|\mu/\mu^-|}(-q)^{\hgt(\lambda^+/\lambda)+\wt(\mu/\mu^-)} (1-q)^{\rib(\lambda^+/\lambda)+\rib(\mu/\mu^-)-1} s_{\lambda^+/\mu^-},$$
 where the sum on the right is over $\lambda^+,\mu^-$ such that $\lambda^+/\lambda$ and $\mu/\mu^-$ are broken ribbons with $|\lambda^+/\lambda| + |\mu/\mu^-| = r$.
\end{thm}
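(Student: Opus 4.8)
\emph{The plan is} to rewrite $\wp_r$ through the products $h_ae_b=s_{(a)}s_{(1^b)}$, substitute into the skew Pieri rules SPR and CSPR, and then recognize the resulting doubly-indexed signed sum as a product of two instances of the non-skew quantum Murnaghan--Nakayama rule QMNR --- one for the outer shape and one for the inner shape in a conjugated, $q\mapsto q^{-1}$ form. First I would prove the identity $(1-q)\,\wp_m=\sum_{s=0}^m(-q)^s h_{m-s}e_s$ for $m\ge 0$: taking $\lambda=\emptyset$ in QMNR gives the hook expansion $\wp_m=\sum_{b=0}^{m-1}(-q)^b s_{(m-b,1^b)}$, and combining it with the classical Pieri identity $h_{m-s}e_s=s_{(m-s+1,1^{s-1})}+s_{(m-s,1^s)}$ (for $0<s<m$) and telescoping yields the claim. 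Hence $s_{\lambda/\mu}\cdot\wp_r=\tfrac1{1-q}\sum_{s=0}^r(-q)^s\,s_{\lambda/\mu}\cdot s_{r-s}\cdot s_{1^s}$.

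Next I would apply, to each summand, SPR to $s_{\lambda/\mu}\cdot s_{r-s}$ and then CSPR to each resulting skew Schur function times $s_{1^s}$. This writes $s_{\lambda/\mu}\cdot\wp_r$ as $\tfrac1{1-q}$ times a signed sum indexed by chains $\lambda\subseteq\lambda^{(1)}\subseteq\lambda^+$ with $\lambda^{(1)}/\lambda$ a horizontal and $\lambda^+/\lambda^{(1)}$ a vertical strip, and $\mu^-\subseteq\mu^{(1)}\subseteq\mu$ with $\mu^{(1)}/\mu^-$ a horizontal and $\mu/\mu^{(1)}$ a vertical strip, the term $s_{\lambda^+/\mu^-}$ carrying weight $(-q)^s(-1)^{|\mu/\mu^{(1)}|+|\mu^{(1)}/\mu^-|}$. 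Tracking the four strip sizes shows $s=|\lambda^+/\lambda^{(1)}|+|\mu^{(1)}/\mu^-|$ (so $s$ is forced by the chains) and $|\lambda^+/\lambda|+|\mu/\mu^-|=r$; after substituting this value of $s$, the weight factors as $(-q)^{|\lambda^+/\lambda^{(1)}|}\cdot q^{|\mu^{(1)}/\mu^-|}(-1)^{|\mu/\mu^{(1)}|}$.

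Then, fixing $(\lambda^+,\mu^-)$ and collecting the coefficient of $s_{\lambda^+/\mu^-}$, the factorization makes it $\tfrac1{1-q}\,A_\lambda B_\mu$ with $A_\lambda=\sum_{\lambda^{(1)}}(-q)^{|\lambda^+/\lambda^{(1)}|}$ and $B_\mu=\sum_{\mu^{(1)}}(-1)^{|\mu/\mu^{(1)}|}q^{|\mu^{(1)}/\mu^-|}$. By the Pieri rule, $A_\lambda$ is the coefficient of $s_{\lambda^+}$ in $s_\lambda\sum_a(-q)^a h_{M-a}e_a$ with $M=|\lambda^+/\lambda|$, so by Step 1 it equals $(1-q)\langle s_\lambda\wp_M,s_{\lambda^+}\rangle$; by QMNR this vanishes unless $\lambda^+/\lambda$ is a broken ribbon, and rewriting the QMNR coefficient $(-1)^{M+1}(-1)^{\wt}q^{\hgt}(q-1)^{\rib-1}$ as $(-q)^{\hgt}(1-q)^{\rib-1}$ via $\hgt+\wt=M-\rib$ gives $A_\lambda=(-q)^{\hgt(\lambda^+/\lambda)}(1-q)^{\rib(\lambda^+/\lambda)}$. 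Similarly $B_\mu$ is the coefficient of $s_\mu$ in $s_{\mu^-}\sum_{i+k=M'}(-1)^iq^k h_ke_i$ with $M'=|\mu/\mu^-|$, and this inner symmetric function is $q^{M'}$ times the Step 1 generating function evaluated at $q\mapsto q^{-1}$; thus QMNR read over $\mathbb{Q}(q)$ with $q$ replaced by $q^{-1}$ (which interchanges $\hgt$ and $\wt$ on a broken ribbon) yields $B_\mu=(-1)^{\hgt(\mu/\mu^-)+\rib(\mu/\mu^-)}q^{\wt(\mu/\mu^-)}(1-q)^{\rib(\mu/\mu^-)}$, again $0$ off broken ribbons. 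Multiplying $\tfrac1{1-q}A_\lambda B_\mu$ and using $\hgt+\wt+\rib\equiv(\text{size})\pmod 2$ for broken ribbons collapses the overall sign to $(-1)^{|\mu/\mu^-|}$ and reproduces exactly the coefficient $(-1)^{|\mu/\mu^-|}(-q)^{\hgt(\lambda^+/\lambda)+\wt(\mu/\mu^-)}(1-q)^{\rib(\lambda^+/\lambda)+\rib(\mu/\mu^-)-1}$ of SQMNR'.

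I expect the delicate step to be the last one: recognizing that iterating SPR and CSPR produces a sum which, once the endpoints $(\lambda^+,\mu^-)$ are fixed, \emph{factors}, so that each factor is a substituted instance of the already-known QMNR, and then carrying out the sign and exponent bookkeeping cleanly. I would also note two alternatives: a fully bijective proof extending Assaf--McNamara's sign-reversing involution for SPR --- now a broken ribbon attached to $\lambda$ is allowed to ``spill across'' the inner shape $\mu$, leaving a removed broken ribbon, and the involution cancels the overlapping configurations --- and an algebraic one obtained by feeding the hook expansion of $\wp_r$ into the Lam--Lauve--Sottile skew Littlewood--Richardson rule. Finally, the equivalent form SQMNR follows from SQMNR' by grouping the sum according to $j=|\mu/\mu^-|$ and again invoking $\hgt+\wt=(\text{size})-\rib$.
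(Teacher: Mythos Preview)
Your argument is correct, and it is genuinely different from both proofs the paper gives.

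The paper's first proof (Section~3) is bijective: it interprets each term on the right-hand side via a \emph{colored tableau}, obtained by splitting each broken ribbon into a horizontal strip (white cells) and a vertical strip (gray cells); a lemma shows that summing over colorings produces exactly the weight $(-q)^{\hgt+\wt}(1-q)^{\rib}$. Then it applies the CSPR involution $\Psi$ on the gray cells and the SPR involution $\Phi$ on the white cells; the surviving fixed points enumerate the left-hand side. The paper's second proof (Section~4) starts from the hook expansion $\wp_r=\sum_k(-q)^{r-k}s_{k,1^{r-k}}$, feeds each hook into the Lam--Lauve--Sottile skew Littlewood--Richardson rule, and evaluates the resulting coefficients via a jeu-de-taquin analysis (the ``$k$-NE property'' lemmas).

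Your route shares the hook/Pieri starting point with Section~4 --- your identity $(1-q)\wp_m=\sum_s(-q)^sh_{m-s}e_s$ is equivalent to the paper's Lemma~\ref{lemma1} --- but then diverges: instead of SLRR and jeu de taquin, you apply SPR and CSPR as black boxes and observe that, for fixed endpoints $(\lambda^+,\mu^-)$, the sum over intermediate shapes $(\lambda^{(1)},\mu^{(1)})$ \emph{factors} into two independent sums, each recognizable (after your $q\mapsto q^{-1}$ trick on the inner side) as a coefficient in the non-skew QMNR. This is an elegant reduction: it derives SQMNR$'$ purely from SPR, CSPR, and QMNR, with no new combinatorics. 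In hindsight your factorization is the algebraic shadow of the paper's colored-tableaux lemma (the horizontal/vertical splitting), and your successive use of SPR and CSPR mirrors the successive use of $\Phi$ and $\Psi$ --- so your proof can be read as the algebraic skeleton of Section~3, with QMNR replacing the explicit fixed-point enumeration. What you gain is brevity and the absence of any new involution or jeu-de-taquin lemma; what the paper's Section~3 gains is a fully bijective proof (of SQMNR$''$). Your closing remark that the two alternatives are a bijective extension of Assaf--McNamara and an SLRR argument is exactly right: those are precisely the paper's two proofs.
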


To see that these two versions are equivalent, note that
$$q^{\hgt(\lambda^+/\lambda)+\wt(\mu/\mu^-)} (q-1)^{\rib(\lambda^+/\lambda)+\rib(\mu/\mu^-)-1}=$$
$$= (-1)^{\hgt(\lambda^+/\lambda)+\wt(\mu/\mu^-)+\rib(\lambda^+/\lambda)+\rib(\mu/\mu^-)-1} (-q)^{\hgt(\lambda^+/\lambda)+\wt(\mu/\mu^-)} (1-q)^{\rib(\lambda^+/\lambda)+\rib(\mu/\mu^-)-1},$$
which means that the sign of $(-q)^{\hgt(\lambda^+/\lambda)+\wt(\mu/\mu^-)} (1-q)^{\rib(\lambda^+/\lambda)+\rib(\mu/\mu^-)-1}$ of a term on the right-hand side of SQMNR is
$$(-1)^{r+1-j+\wt(\lambda^+/\lambda)+\hgt(\mu/\mu^-)+\hgt(\lambda^+/\lambda)+\wt(\mu/\mu^-)+\rib(\lambda^+/\lambda)+\rib(\mu/\mu^-)-1}.$$
If $\pi/\sigma$ is a ribbon, we have $\wt(\pi/\sigma)+\hgt(\pi/\sigma)+1 = |\pi/\sigma|$. Therefore if $\pi/\sigma$ is a broken ribbon,
\begin{equation} \label{sum}
 \wt(\pi/\sigma)+\hgt(\pi/\sigma)+\rib(\pi/\sigma) = |\pi/\sigma|.
\end{equation}
That means that the sign above is equal to
$$(-1)^{r+1-j+|\lambda^+/\lambda|+|\mu/\mu^-|-1}=(-1)^{2r+j} = (-1)^{|\mu/\mu^-|}.$$

\medskip

The main theorem is a generalization of several statements. The following is a sample:
\begin{itemize}
 \item $q = 0$: a term on the right-hand side of SQMNR' is non-zero if and only if $\hgt(\lambda^+/\lambda)+\wt(\mu/\mu^-) = 0$. In this case, $\lambda^+/\lambda$ has height $0$ (and is a horizontal strip) and $\mu/\mu^-$ has width $0$ (and is a vertical strip). As noted above, $\wp_r|_{q = 0} = s_r$. SQMNR' specializes to the skew Pieri rule due to Assaf-McNamara \cite{amcn}.
 \item $q = 1$: a term on the right-hand side of SQMNR' is non-zero if and only if $\rib(\lambda^+/\lambda)+\rib(\mu/\mu^-)-1 = 0$. In this case, one of $\lambda^+/\lambda$ and $\mu/\mu^-$ is empty, and the other one is a ribbon. As noted above, $\wp_r|_{q = 1} = p_r$. SQMNR' therefore states
 $$s_{\lambda/\mu} \cdot p_r = \sum_{\lambda^+} (-1)^{\hgt(\lambda^+/\lambda)}s_{\lambda^+/\mu} + \sum_{\mu^-} (-1)^{k}(-1)^{\wt(\mu/\mu^-)} s_{\lambda/\mu^-} = $$
 $$= \sum_{\lambda^+} (-1)^{\hgt(\lambda^+/\lambda)}s_{\lambda^+/\mu} - \sum_{\mu^-} (-1)^{\hgt(\mu/\mu^-)} s_{\lambda/\mu^-},$$
 where the first sum is over $\lambda^+$ so that $\lambda^+/\lambda$ is a ribbon, and the second sum is over $\mu^-$ so that $\mu/\mu^-$ is a ribbon. This is the skew Murnaghan-Nakayama rule due to Assaf-McNamara \cite{slides}.
 \item $q \to \infty$: divide SQMNR by $q^{r-1}$ and send $q \to \infty$. The limit of the left-hand side is $(-1)^{r-1} s_{\lambda/\mu} s_{1^r}$. A term on the right is
 $$\lim_{q \to \infty} (-1)^{r+1-j} (-1)^{\wt(\lambda^+/\lambda)+\hgt(\mu/\mu^-)} \frac{q^{\hgt(\lambda^+/\lambda)+\wt(\mu/\mu^-)} (q-1)^{\rib(\lambda^+/\lambda)+\rib(\mu/\mu^-)-1}}{q^{r-1}}=$$
 $$= (-1)^{r+1-j} (-1)^{\wt(\lambda^+/\lambda)+\hgt(\mu/\mu^-)} \lim_{q \to \infty} q^{-(\wt(\lambda^+/\lambda)+\hgt(\mu/\mu^-))},$$
 where we used \eqref{sum}. This is non-zero if and only if $\wt(\lambda^+/\lambda)+\hgt(\mu/\mu^-) = 0$, i.e.\ if $\lambda^+/\lambda$ is a vertical strip and $\mu/\mu^-$ is a horizontal strip, and the limit is $(-1)^{r-1} (-1)^j$. SQMNR therefore implies the conjugate skew Pieri rule.
 \item $\mu = \emptyset$: SQMNR is obviously the quantum Murnaghan-Nakayama rule.
 \item $\mu = \emptyset$, $q = 0$: this is the classical Pieri rule.
 \item $\mu = \emptyset$, $q = 1$: this is the classical Murnaghan-Nakayama rule.
 \item $\mu = \emptyset$, $q \to \infty$: this implies the classical conjugate Pieri rule.
 \item $\lambda = \mu = \emptyset$: this gives the expansion of quantum power sum functions in the basis of Schur functions. The only Young diagrams of size $r$ that are also broken ribbons are hooks, i.e.\ diagrams of partitions of the type $(k,1^{r-k})$ for $1 \leq k \leq r$. Therefore (as we will verify independently in Lemma \ref{lemma1}),
 $$\wp_r = \sum_{k=1}^{r}(-q)^{r-k} s_{k,1^{r-k}}.$$
\end{itemize}

Define a \emph{broken ribbon tableau of shape $\lambda/\mu$ and type $\tau$} (respectively, \emph{reverse broken ribbon tableau of shape $\lambda/\mu$ and type $\tau$}) as an assignment of positive integers to the squares of $\lambda/\mu$ satisfying the following;
\begin{itemize}
 \item every row and column is weakly increasing (respectively, weakly decreasing);
 \item the integer $i$ appear $\tau_i$ times;
 \item the set $T_i$ of squares occupied by $i$ forms a broken ribbon or is empty.
\end{itemize}
For a (reverse) broken ribbon tableau $T$ we define $\hgt(T) = \sum \hgt(T_i)$, $\wt(T) = \sum \wt(T_i)$, $\rib(T) = \sum \rib(T_i)$.

\medskip

The main theorem implies the following corollary.

\begin{cor}
 We have
 $$s_{\lambda/\mu} \cdot \wp_\tau = \sum_{\stackrel{\lambda^+ \supseteq \lambda}{\scriptscriptstyle \mu^- \subseteq \mu}} (-1)^{|\mu/\mu^-|} \chi(\lambda^+,\lambda,\mu,\mu^-;\tau) s_{\lambda^+/\mu^-},$$
 where
 $$\chi(\lambda^+,\lambda,\mu,\mu^-;\tau) = \sum (-q)^{\hgt(T')+\wt(T'')} (1-q)^{\rib(T')+\rib(T'')-1}$$
 with the sum over all pairs $(T',T'')$ of a broken ribbon tableau and a reverse broken ribbon tableau of shapes $\lambda^+/\lambda$ and $\mu/\mu^-$, respectively, and types $\tau'$ and $\tau''$, respectively, so that $\tau'+\tau'' = \tau$.
\end{cor}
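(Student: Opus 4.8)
The plan is to obtain the corollary from SQMNR' by iterating it once for each part of $\tau$. Write $\wp_\tau = \wp_{\tau_1}\wp_{\tau_2}\cdots\wp_{\tau_\ell}$ with $\ell = \ell(\tau)$ and induct on $\ell$. When $\ell = 1$ the statement is exactly SQMNR': a broken ribbon tableau (resp.\ reverse broken ribbon tableau) of type the single part $(\tau_1)$ is just a filling of the whole skew shape by $1$'s, so the only condition is that the shape itself be a broken ribbon, and $\chi$ collapses to the summand of SQMNR' with $r = \tau_1$ (here the exponents $\rib(T')+\rib(T'')-1$ and $\rib(T')+\rib(T'')-\ell(\tau)$ coincide; see below).

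For the inductive step I would put $\rho = (\tau_2,\ldots,\tau_\ell)$, so that $s_{\lambda/\mu}\cdot\wp_\tau = \bigl(s_{\lambda/\mu}\cdot\wp_{\tau_1}\bigr)\cdot\wp_\rho$. First apply SQMNR' to the bracketed product, expanding it as a signed sum of skew Schur functions $s_{\nu/\kappa}$ over all $\nu\supseteq\lambda$ and $\kappa\subseteq\mu$ with $\nu/\lambda$ and $\mu/\kappa$ broken ribbons and $|\nu/\lambda| + |\mu/\kappa| = \tau_1$; then apply the inductive hypothesis to each $s_{\nu/\kappa}\cdot\wp_\rho$. Fixing the final pair $(\lambda^+,\mu^-)$ and collecting terms, a contribution is indexed by a choice of intermediate $(\nu,\kappa)$ together with a pair $(U',U'')$ consisting of a broken ribbon tableau of shape $\lambda^+/\nu$ and a reverse broken ribbon tableau of shape $\kappa/\mu^-$, with types summing to $\rho$.

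The crux is to recognize this data as a single pair $(T',T'')$ of the kind appearing in $\chi$: fill the broken ribbon $\nu/\lambda$ with $1$'s and place $U'$ on $\lambda^+/\nu$ with all its entries increased by $1$; dually, fill $\mu/\kappa$ with $1$'s and place $U''$ on $\kappa/\mu^-$ with entries increased by $1$. Here I would use the standard fact that a filling of a skew shape which is weakly increasing (resp.\ weakly decreasing) along every row and column is the same thing as a chain of nested Young diagrams $\lambda = \nu^{(0)} \subseteq \nu^{(1)} \subseteq \cdots \subseteq \lambda^+$ (resp.\ $\mu = \kappa^{(0)} \supseteq \kappa^{(1)} \supseteq \cdots \supseteq \mu^-$), the cells carrying a fixed value forming a single step of the chain; the extra broken ribbon requirement on each step is furnished by SQMNR' for the value $1$ and by the inductive hypothesis for the higher values. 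This is visibly a bijection, and it respects all the data: the shapes are $\lambda^+/\lambda$ and $\mu/\mu^-$; the types are $\tau' = (|\nu/\lambda|,\rho')$ and $\tau'' = (|\mu/\kappa|,\rho'')$, so $\tau'+\tau'' = \tau$; the sign $(-1)^{|\mu/\mu^-|}$ is the product of $(-1)^{|\mu/\kappa|}$ and $(-1)^{|\kappa/\mu^-|}$; the exponent of $-q$ is $\bigl(\hgt(\nu/\lambda)+\wt(\mu/\kappa)\bigr) + \bigl(\hgt(U')+\wt(U'')\bigr) = \hgt(T')+\wt(T'')$; and the exponent of $1-q$ is $\bigl(\rib(\nu/\lambda)+\rib(\mu/\kappa)-1\bigr) + \bigl(\rib(U')+\rib(U'')-(\ell-1)\bigr) = \rib(T')+\rib(T'')-\ell$. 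Summing over all $(\lambda^+,\mu^-)$ then gives the claimed identity.

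The step I expect to require the most care is precisely the accounting of the power of $1-q$: each of the $\ell$ applications of SQMNR' contributes one factor $-1$ to that exponent, so over $\wp_{\tau_1},\ldots,\wp_{\tau_\ell}$ it accumulates to $\rib(T')+\rib(T'')-\ell(\tau)$ --- i.e.\ the ``$-1$'' in the displayed formula for $\chi$ should be understood as ``$-\ell(\tau)$'', the two agreeing exactly when $\tau$ is a single part (which is why the discrepancy is invisible in SQMNR' itself). Apart from this, the remaining verifications are routine: one checks that $\kappa \subseteq \nu$ holds throughout (immediate from $\mu^- \subseteq \mu \subseteq \lambda \subseteq \lambda^+$), that the ribbons added on the outside are disjoint from the cells removed on the inside, and that distinct chains of partitions yield distinct fillings, so that nothing is counted twice. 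None of these is a genuine obstacle, but they are what turns the iteration into a complete argument.
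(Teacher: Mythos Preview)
Your approach is exactly what the paper intends: the corollary is stated without proof as an immediate consequence of iterating SQMNR', and your induction on $\ell(\tau)$ carries this out correctly.

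More importantly, you have caught a genuine error in the paper's statement of the corollary. Iterating SQMNR' over the $\ell = \ell(\tau)$ parts contributes a factor $(1-q)^{-1}$ at each step, so the exponent of $1-q$ in $\chi(\lambda^+,\lambda,\mu,\mu^-;\tau)$ must be $\rib(T')+\rib(T'')-\ell(\tau)$, not $\rib(T')+\rib(T'')-1$ as printed. Your remark that the two agree precisely when $\ell(\tau)=1$ explains why the discrepancy is invisible in SQMNR' itself. The rest of your bookkeeping (signs, the exponent of $-q$, the bijection between chains of partitions and broken ribbon tableaux via shifting the entries of $U',U''$ up by one) is routine and correct.
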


The paper is structured as follows. In Section \ref{amcn}, we describe the sign-reversing involution of Assaf and McNamara that was used to prove their skew Pieri rule. Furthermore, we show a variant of this involution that proves the conjugate skew Pieri rule. Note that this involution is actually much simpler than the one in \cite{amcn} (but, of course, does not provide a bijective proof of the skew Pieri rule itself). In Section \ref{inv}, we present an extension of these involutions that proves the skew quantum Murnaghan-Nakayama rule. There is quite some work involved to interpret the right-hand side of SQMNR in an appropriate way, but once this is done the involution is just a natural combination of the two involutions in Section \ref{amcn}. In Section \ref{another}, we present another proof of SQMNR, via the skew Littlewood-Richardson rule of Lam-Lauve-Sotille \cite{lls}; since this result (at the moment) only has an algebraic proof, this proof of SQMNR is not completely combinatorial. In Section \ref{hlp}, we give some conjecured skew Pieri-type rules for Hall-Littlewood polynomials, for which our combinatorial methods seem to fail. We finish with some concluding remarks in Section \ref{final}.

\section{Proofs of the skew Pieri rule and its dual} \label{amcn}

One of the most important algorithms on semistandard Young tableaux is the Robinson-Schensted \emph{row insertion}. Given a semistandard Young tableau $T$ of shape $\lambda$ and an integer $k$, we can \emph{insert $k$ into $T$} as follows. Define $k_1 = k$. Find the smallest $j$ so that $T_{1j} > k_1$, replace $T_{1j}$ by $k_1$, and define $k_2$ to be the previous value of $T_{1j}$. Then find the smallest $j$ so that $T_{2j} > k_2$, replace $T_{2j}$ by $k_2$, and define $k_3$ to be the previous value of $T_{2j}$. Continue until, for some $i'$, all elements of row $i'$ are $\leq k_{i'}$. Then define $T_{i',\lambda_i'+1} = k_{i'}$, and finish the algorithm. The result is again a semistandard Young tableau. We say that the insertion of $k$ into $T$ \emph{exits in row $i'$}. See \cite[\S 7.11]{stanley} for details.

\begin{exm}
 Inserting $1$ into the tableau on the left of Figure \ref{fig2} produces the tableau on the right.
 \begin{figure}[ht!]
 \begin{center}
  \includegraphics[height=2.5cm]{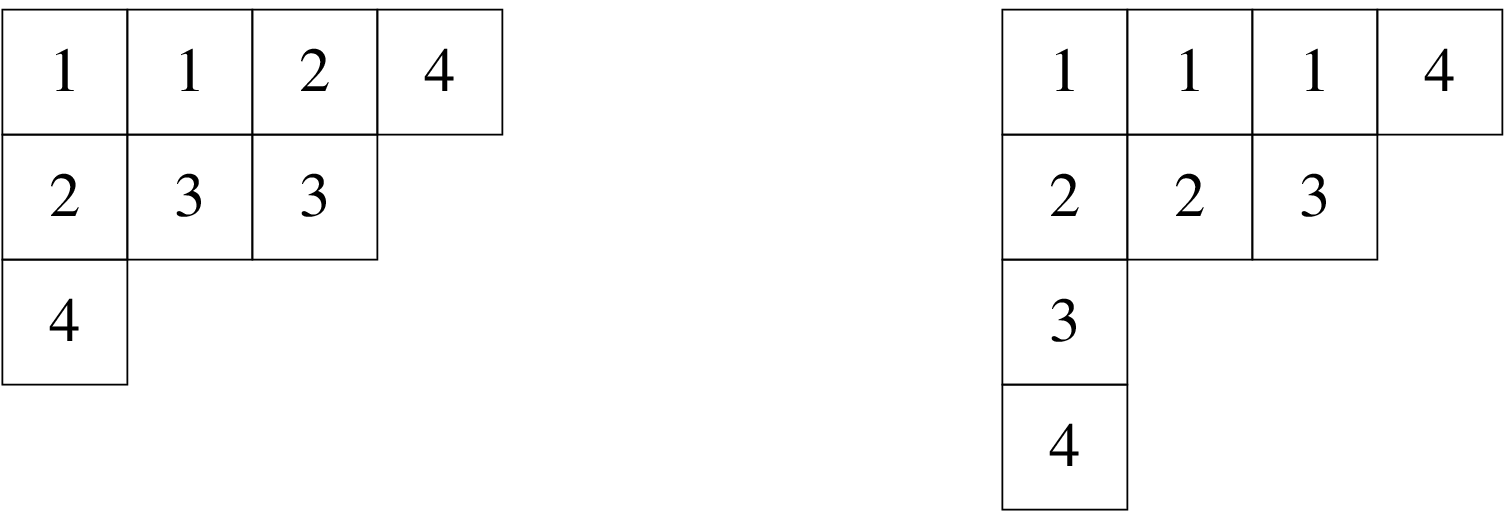}
 \end{center}
 \caption{}\label{fig2}\end{figure}
\end{exm}

\medskip

Now assume we have a \emph{skew} semistandard Young tableau $T$ of some shape $\lambda/\mu$. We can \emph{insert $k$ into $T$} for some integer $k$ in almost exactly the same way. Define $k_1 = k$. Find the smallest $j$, $\mu_1 < j \leq \lambda_1$, so that $T_{1j} > k_1$, replace $T_{1j}$ by $k_1$, and define $k_2$ to be the previous value of $T_{1j}$. Then find the smallest $j$, $\mu_2 < j \leq \lambda_2$, so that $T_{2j} > k_2$, replace $T_{2j}$ by $k_2$, and define $k_3$ to be the previous value of $T_{2j}$. Continue until, for some $i'$, all elements of row $i'$ are $\leq k_{i'}$. Then define $T_{i',\lambda_{i'}+1} = k_{i'}$, and finish the algorithm. The result is again a semistandard Young tableau. We say that the insertion of $k$ into $T$ \emph{exits in row $i'$}.

\medskip

There is, however, another natural kind of insertion. Take $i_0$ so that either $i_0 = 1$ or $\mu_{i_0-1} > \mu_{i_0}$, and take $k_{i_0+1} = T_{i_0,\mu_{i_0}+1}$. We can \emph{insert from row $i_0$ in $T$} as follows. Erase the entry $T_{i_0,\mu_{i_0}+1}$. Find the smallest $j$, $\mu_{i_0+1} < j \leq \lambda_{i_0+1}$, so that $T_{i_0+1,j} > k_{i_0+1}$, replace $T_{i_0+1,j}$ by $k_{i_0+1}$, and define $k_{i_0+2}$ to be the previous value of $T_{i_0+1,j}$. Then find the smallest $j$, $\mu_{i_0+2} < j \leq \lambda_{i_0+2}$, so that $T_{i_0+2,j} > k_{i_0+2}$, replace $T_{i_0+2,j}$ by $k_{i_0+2}$, and define $k_{i_0+3}$ to be the previous value of $T_{i_0+2,j}$. Continue until, for some $i'$, all elements of row $i'$ are $\leq k_{i'}$. Then define $T_{i',\lambda_{i'}+1} = k_{i'}$, and finish the algorithm. The result is again a semistandard Young tableau. We say that the insertion from row $i_0$ in $T$ \emph{exits in row $i'$}.

\begin{exm}
 In the following figures, we have an insertion of $1$ into a tableau, and insertion from row $2$ in a tableau.
 \begin{figure}[ht!]
 \begin{center}
  \includegraphics[height=2.5cm]{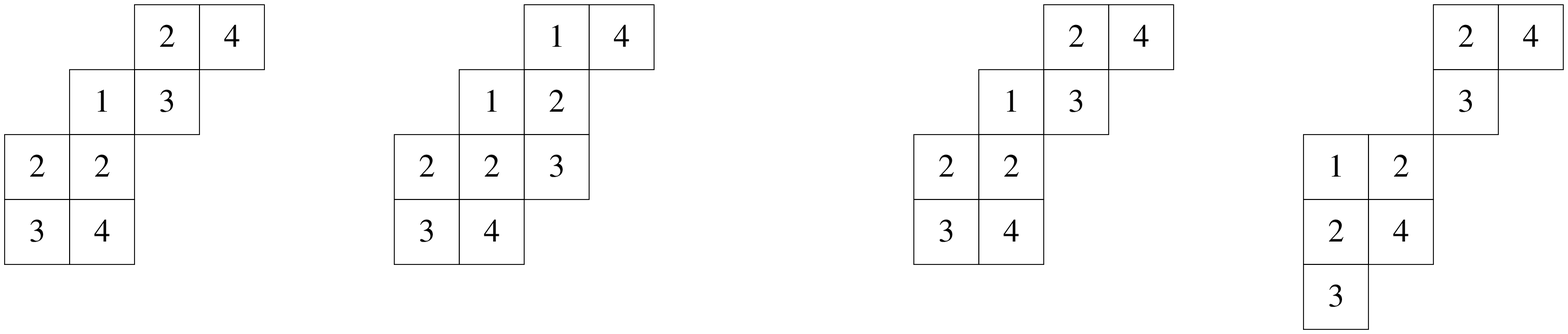}
 \end{center}
 \caption{}\label{fig3}\end{figure}
\end{exm}

Note that insertion into $T$ is in a way a special case of insertion from a row in $T$. Indeed, take $\mu_0 = \lambda_1$, $\lambda_0 = \lambda_1 + 1$, and define $T_{0,\lambda_1} = k$. Then insertion from row $0$ in the new tableau gives the same result as insertion of $k$ into the original tableau.

\medskip

Insertion has an inverse operation, \emph{reverse insertion}. Say we are given a semistandard Young tableau $T$ of shape $\lambda/\mu$. Take $i'$ so that $\lambda_{i'+1} < \lambda_{i'}$. We \emph{reverse insert from row $i'$ in $T$} as follows. Define $k_{i'-1} = T_{i',\lambda_{i'}}$. Erase the entry $T_{i',\lambda_{i'}}$. Find the largest $j$, $\mu_{i'-1} < j \leq \lambda_{i'-1}$, so that $T_{i'-1,j} < k_{i'-1}$, replace $T_{i'-1,j}$ by $k_{i'-1}$, and define $k_{i'-2}$ to be the previous value of $T_{i'-1,j}$. Then find the largest $j$, $\mu_{i'-2} < j \leq \lambda_{i'-2}$, so that $T_{i'-2,j} < k_{i'-2}$, replace $T_{i'-2,j}$ by $k_{i'-2}$, and define $k_{i'-3}$ to be the previous value of $T_{i'-2,j}$. Continue until we have $k_{i_0}$, where either $i_0 = 0$ or all elements of row $i_0$ are $\geq k_{i_0}$. If $i_0 = 0$, the result is a pair $(S,k)$, where $S$ is a semistandard Young tableau and $k = k_0$. We call $k$ the \emph{exiting integer}. If $i_0 \geq 1$ and all elements of row $i_0$ are $\geq k_{i_0}$, define $T_{\mu_{i_0}} = k_{i_0}$. The result is a semistandard Young tableau $S$. We say that the reverse insertion from row $i'$ in $T$ \emph{exits in row $i_0$}.

\begin{exm}
 In the following figures, we have reverse insertion from rows $2$ (which exits in row $0$ with exiting integer $2$) and $4$ (which exits in row $1$).
 \begin{figure}[ht!]
 \begin{center}
  \includegraphics[height=2cm]{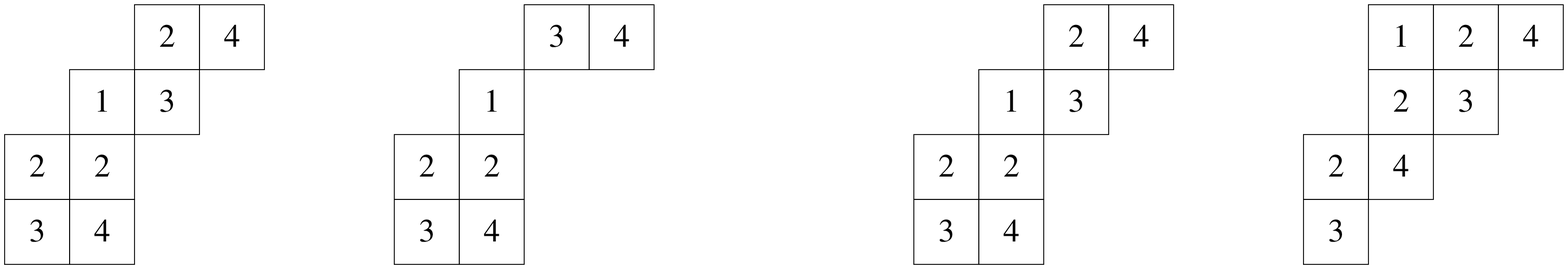}
 \end{center}
 \caption{}\label{fig4}\end{figure}

\end{exm}

In \cite{amcn}, the operations of insertion and reverse insertion are proved to be inverses of one another in the following sense. If the insertion of an integer $k$ into a semistandard Young tableau $T$ exits in row $i'$ and the resulting tableau is $S$, then the reverse insertion from row $i'$ in $S$ exits in row $0$ and the result is $(T,k)$. If the insertion from row $i_0$ into $T$ exits in row $i'$ and the resulting tableau is $S$, then the reverse insertion from row $i'$ in $S$ exits in row $i_0$ and the result is $T$. Similarly, if the reverse insertion from row $i'$ in $T$ exits in row $0$ and the result is $(S,k)$, then the insertion of $k$ into $S$ exits in row $i'$ and the result is $T$. And if the reverse insertion from row $i'$ in $T$ exits in row $i_0 \geq 1$ and the result is $S$, then the insertion from row $i_0$ into $S$ exits in row $i'$ and the result is $T$.

\medskip

We will also need the following property of insertion and reverse insertion. The lemma essentially states that insertions never cross.

\begin{lemma} \label{insertion}
 Say we are given a semistandard Young tableau $T$.
 \begin{enumerate}
  \renewcommand{\labelenumi}{(\alph{enumi})}
  \item If $S$ is obtained by reverse insertion from $i'$ in $T$ that exits in row $i_0 > 0$, and $R$ is obtained by reverse insertion from $i''<i'$ in $S$ that exits in row $i_0'$, then $i_0' < i_0$.
  \item If $S$ is obtained by reverse insertion from $i'$ in $T$ that exits in row $0$ with exiting integer $k'$, and $R$ is obtained by reverse insertion from $i''<i'$ in $S$ that exits in row $i_0'$, then $i_0 = 0$ and the reverse insertion exits with exiting integer $k'' > k'$.
  \item If reverse insertion from $i'$ in $T$ exits in $i_0$ and insertion from $i_0' > i_0$ in $T$ exits in $i''$,then $i'' > i'$.
 \end{enumerate}
\end{lemma}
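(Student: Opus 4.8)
The plan is to prove all three parts by carefully tracking the columns in which successive bumps occur during the insertion procedures, using the fact that each (reverse) insertion path is, in each row it touches, confined to a single cell, and that these cells move weakly in a predictable direction as we go from row to row. The governing principle is monotonicity: in a reverse insertion from row $i'$, the value $k_{i'-1} = T_{i',\lambda_{i'}}$ is placed by finding the largest column $j$ in row $i'-1$ with $T_{i'-1,j} < k_{i'-1}$; since $T$ is a semistandard Young tableau, the column indices visited in rows $i', i'-1, i'-2, \ldots$ are weakly decreasing, and the ejected values $k_{i'-1} > k_{i'-2} > \cdots$ are strictly decreasing. This is the standard ``bumping paths move left and down'' phenomenon, here read upward; I would first record this as a preliminary observation (essentially contained in \cite[\S 7.11]{stanley} and in \cite{amcn}) so that each of (a), (b), (c) becomes a comparison of two such monotone paths.

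For part (a), let $P$ be the reverse-insertion path from row $i'$ in $T$ (touching rows $i', i'-1, \ldots, i_0$, with the entry $T_{i_0,\mu_{i_0}+1}$ newly created when the path stops), and let $Q$ be the reverse-insertion path from row $i'' < i'$ in $S$. First I would argue that $Q$, started strictly above where $P$ ``enters the relevant region'', stays strictly to the left of $P$ in every row they share: this follows by downward induction on the row index, comparing at each step the column chosen by $Q$ (largest $j$ with a strictly smaller entry) with the column occupied by $P$, and using that $P$ only \emph{decreased} entries in the cells it touched, so $Q$'s search is, if anything, pushed further left. Because $Q$ stays left of $P$, and $P$ terminates at row $i_0$ by \emph{creating} a new cell at position $(i_0,\mu_{i_0}+1)$ — i.e. $P$'s ejected value was small enough to stop — the path $Q$, being even further left with an even smaller ejected value propagating upward, must stop no later, hence $i_0' < i_0$ (the inequality is strict since $Q$ starts strictly above and the two paths never occupy the same cell in a common row). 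Part (b) is the same argument in the boundary case $i_0 = 0$: $P$ exits with integer $k'$, $Q$ stays strictly to its left, so $Q$ also runs off the top, and its exiting integer $k''$ is the value ejected from row $1$ along a path lying strictly left of $P$'s path, which forces $k'' > k'$ by the strict-decrease-of-ejected-values observation applied across the two paths. Part (c) is the ``insertion vs.\ reverse insertion'' mixed case: if reverse insertion from $i'$ exits in $i_0$ and then we insert from row $i_0' > i_0$, the forward path $R$ starts strictly below the row where $P$ terminated; I would show $R$ stays strictly below (and to the right of) $P$ as both travel downward, again by induction using that the forward search finds the smallest $j$ with a strictly larger entry while $P$ has only decreased entries, so $R$'s path is unobstructed on the right; since $P$ exited at row $i'$ by creating $T_{i',\lambda_{i'}+1}$, the path $R$ lying strictly below it must exit at a strictly later row $i'' > i'$.

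The routine part is the row-by-row induction establishing ``$Q$ stays strictly left of $P$'' (resp.\ ``$R$ stays strictly right of $P$''): one checks the base case at the starting row and then the inductive step, which amounts to the inequality ``if in row $m$ path $Q$ is strictly left of path $P$ and ejects a value $\leq$ (resp.\ with the appropriate comparison to) what $P$ ejected, then the same holds in row $m-1$'' — this uses only the semistandardness of the current tableau and the left-to-right monotonicity of the entries in each row. The main obstacle, and the place requiring the most care, is bookkeeping the three possible \emph{termination modes} of a (reverse) insertion path — stopping inside an existing row, running off the end of a row to create a new cell, or running off the top/bottom of the tableau — and verifying that in every combination the ``outer'' path $P$ terminating forces the ``inner'' path $Q$ (which is uniformly on the correct side with a dominated ejected value) to terminate at least as soon, with the strictness coming from the paths never sharing a cell. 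Once the monotonicity-of-paths lemma is in place and these cases are enumerated, (a), (b), (c) all follow without further computation.
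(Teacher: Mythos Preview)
Your plan---compare the two (reverse) insertion paths row by row---is exactly the paper's approach, but you have the direction of the comparison reversed throughout, and this is not cosmetic: with your direction the argument yields the wrong inequality. In reverse row insertion the carried value is placed into the row above at the \emph{largest} column with a strictly smaller entry, so the column indices along the path are weakly \emph{increasing} as the row index decreases (not decreasing, as you write), and each cell the path touches has its entry \emph{increased} (it is overwritten by the larger value bumped up from below), not decreased. In part~(a) the second path $Q$ begins at $(i'',\lambda_{i''})$, the rightmost cell of its row, while the first path $P$ passes through row $i''$ at some column $\le\lambda_{i''}$; so already at the first common row $Q$ lies weakly to the \emph{right} of $P$, not strictly to the left. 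The correct inductive claim (the paper's) is $j''\ge j'$, from which one sees that $Q$ carries into each row a value at least as large as $P$ carried; since $P$ did not stop before row $i_0$, neither does $Q$, and $Q$ in fact passes through row $i_0$ and exits strictly above it. Your version---``$Q$ further left with an even smaller ejected value, must stop no later''---would give $i_0'\ge i_0$, the opposite of what is claimed.

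The same reversal recurs in (b) and (c). In (b), with $Q$ weakly to the right, its exit from row~$1$ occurs at a column $j''\ge j'$, and since $P$ has already overwritten position $(1,j')$ with something strictly larger than $k'$, one gets $k''\ge S(1,j')>k'$; your ``strictly left'' picture has no mechanism to force $k''>k'$. In (c) you have reverse insertion running the wrong way (it \emph{starts} at row $i'$, removing the cell $(i',\lambda_{i'})$, and exits at row $i_0$; it does not ``exit at row $i'$ by creating $T_{i',\lambda_{i'}+1}$''); the paper shows that the forward-insertion path from $i_0'>i_0$ lies weakly to the \emph{left} of the reverse-insertion path in every common row, hence reaches row $i'$ and exits strictly below. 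Once all the directions are flipped, your row-by-row induction scheme is precisely what works.
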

\begin{proof}
 (a) If $i_0 \geq i''$, then $i_0' < i'' \leq i_0$ and the claim follows. Assume $i_0 < i''$. We claim that if the reverse insertion from $i'$ in $T$ passes through $(i,j')$ and the reverse insertion from $i''$ in $S$ passes through $(i,j'')$, then $j'' \geq j'$; in other words, reverse insertion from $T$ lies weakly to the right of the reverse insertion from $S$. The statement is true for $i = i''$ because in this case, $j'' = \lambda_i$. If it holds for $i$ and $j' < j''$, the reverse insertion from $i'$ in $T$ bumps the entry $T(i,j')$ into row $i-1$; then the reverse insertion from $i''$ in $S$ bumps the entry $T(i,j'') \geq T(i,j')$ into a position which cannot be the the left of the new position of $T(i,j')$ in row $i-1$. If, on the other hand, $j' = j''$, the reverse insertion from $i'$ in $T$ again bumps the entry $T(i,j')$ into row $i-1$ and is itself replaced by a strictly larger entry. Then reverse insertion from $i''$ in $S$ bumps this strictly larger entry into the next row into a position which cannot be the the left of the new position of $T(i,j')$ in row $i-1$.\\
 This means that the reverse insertion from row $i''$ in $S$ passes through row $i_0$ and so it exits in row $< i_0$.
 (b) By the reasoning in (a), the reverse insertion from $S$ is weakly to the right of the reverse insertion from $T$. In particular, reverse insertion from $S$ reaches row $1$, and if the exiting integer $k'$ is bumped from position $(1,j')$, then the exiting integer $k''$ is bumped from $(1,j'')$ for $j'' \geq j'$. In particular, $k'' > k'$.\\
 (c) We claim that if reverse insertion from $i'$ in $T$ passes through $(i,j') \in [T]$, where $i_0' \leq i \leq i'$, then insertion from $i_0'$ in $T$ passes through the cell $(i,j'') \in [T]$ for some $j'' \leq j'$. The statement is true for $i = i_0'$ because in that case, $j'' = \mu_i+1 \leq j'$. If it holds for $i$, then the entry from row $i+1$, say $a$, that was bumped into row $i$ during the reverse insertion from $i'$ in $T$, must be $<a$, and lies in position $(i,j')$ in $T$. Therefore $T(i,j'') < a$ and cannot be bumped into a position to the right of $a$ in row $i+1$ in $T$.\\
 In particular, insertion from row $i_0'$ in $T$ passes through row $i'$, and so the insertion exits in row $i'' > i'$.
\end{proof}

The involution by Assaf and McNamara which proves the skew Pieri rule works as follows. Say we are given a skew shape $\lambda/\mu$ and a semistandard Young tableau $T$ of shape $\lambda^+/\mu^-$, where $\lambda^+/\lambda$ is a horizontal strip and $\mu/\mu^-$ is a vertical strip. Let $v$ be the empty word. Let $i=\infty$ if $\mu = \mu^-$, and let $i$ be the top row of $\mu/\mu^-$ otherwise.

\medskip

While $\lambda^+ \neq \lambda$ and the reverse insertion from row $i'$, the top row of $\lambda^+/\lambda$, in $T$ exits in row $0$ and results in $(S,k)$, attach $k$ to the beginning of $v$, let $T = S$, and let $\lambda^+/\mu^-$ be the shape of the new $T$ (note that $\lambda^+_{i'}$ is decreased by $1$ and $\mu^-$ remains the same).

\medskip

If the while loop stops when $\lambda^+ \neq \lambda$ and the reverse insertion from row $i'$ in $T$ exits in row $i_0$, $0 < i_0 < i$, and results in $S$, let $T = S$.

\medskip

If the while loop stops when $\lambda^+ = \lambda$, $\mu \neq \mu^-$, or when $\lambda^+ \neq \lambda$ and the reverse insertion from row $i'$ in $T$ exits in row $i_0$, $i_0 \geq i$, insert from row $i$ into $T$ and call the resulting tableau $T$.

\medskip

Finish the algorithm by inserting the entries of $v$ from left to right into $T$. The final result is a semistandard Young tableau of some shape $\lambda^{++}/\mu^{--}$, we denote it $\Phi_{\lambda,\mu,\lambda^+,\mu^-}(T)$.

\begin{exm}
 The left drawing shows a skew semistandard Young tableau with $\lambda^+ = 8855432$, $\lambda = 855533$, $\mu = 43222$, $\mu^- = 42111$. The while loop changes $v$ to $2445$ and it stops because after four reverse insertions, the next reverse insertion (from row $7$) exits in row $1$ (see the second drawing). Since this is strictly above the top row of $\mu/\mu^-$, i.e.\ $2$, we also perform this reverse insertion from row $7$ (see the third drawing). Then we insert the integers $2$, $4$, $4$ and $5$ and we get the skew semistandard Young tableau pictured on the right, with $\lambda^+ = 8855431$, $\lambda = 855533$, $\mu = 43222$, $\mu^- = 32111$.\\

 \begin{figure}[ht!]
 \begin{center}
  \includegraphics[height=3.5cm]{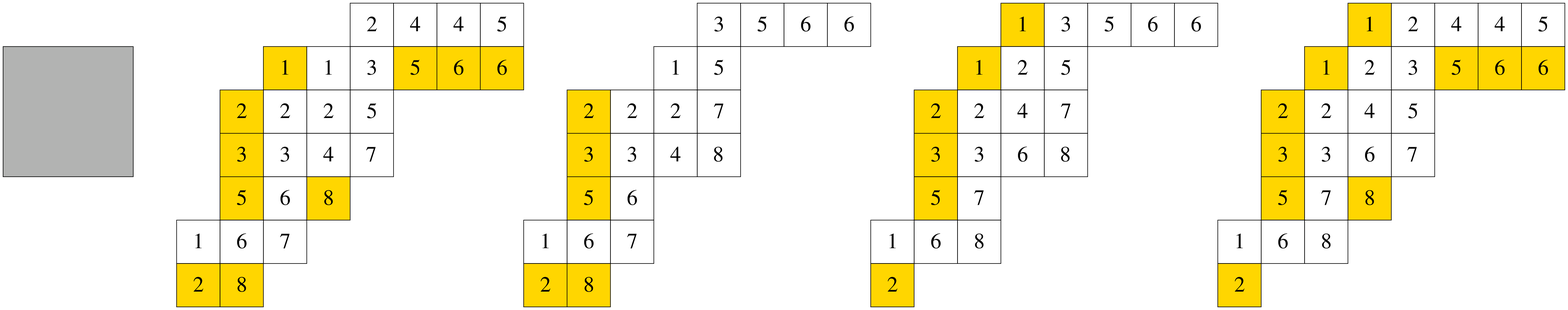}
 \end{center}
 \caption{}\label{fig5}\end{figure}

 In the second example, we start with $\lambda^+ = 8855431$, $\lambda = 855533$, $\mu = 43222$, $\mu^- = 42111$, see the left drawing. The while loop again changes $v$ to $2445$ and it stops because after four reverse insertions, the next reverse insertion (from row $7$) exits in row $5$ (see the second drawing). Since this is not above the top row of $\mu/\mu^-$, we do not perform this reverse insertion. Instead, we insert from the top row of $\mu/\mu^-$, i.e.\ $2$ (see the third drawing). Then we insert the integers $2$, $4$, $4$ and $5$ and we get the skew semistandard Young tableau pictured on the right, with $\lambda^+ = 8855432$, $\lambda = 855533$, $\mu = 43222$, $\mu^- = 43111$.\\

 \begin{figure}[ht!]
 \begin{center}
  \includegraphics[height=3.5cm]{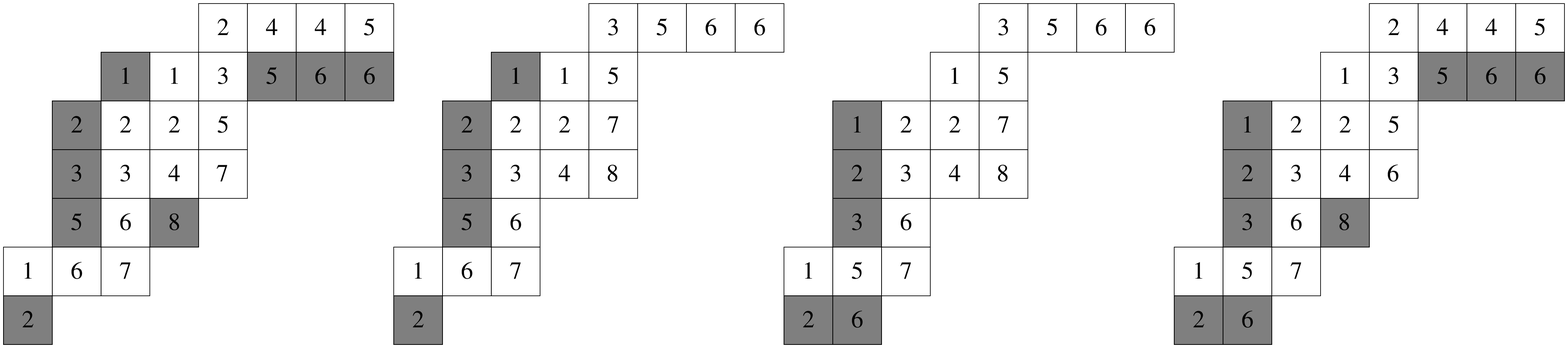}
 \end{center}
 \caption{}\label{fig6}\end{figure}

 In the third example, we start with $\lambda^+ = 9855331$, $\lambda = 855533$, $\mu = 43222$, $\mu^- = 43222$, see the left drawing. The while loop changes $v$ to $11245$ and it stops because after five reverse insertions, $\lambda^+ = \lambda$ and $\mu = \mu^-$. So we insert the integers $1$, $1$, $2$, $4$ and $5$ and we get the original skew semistandard Young tableau, pictured on the right.\\

 \begin{figure}[ht!]
 \begin{center}
  \includegraphics[height=3.5cm]{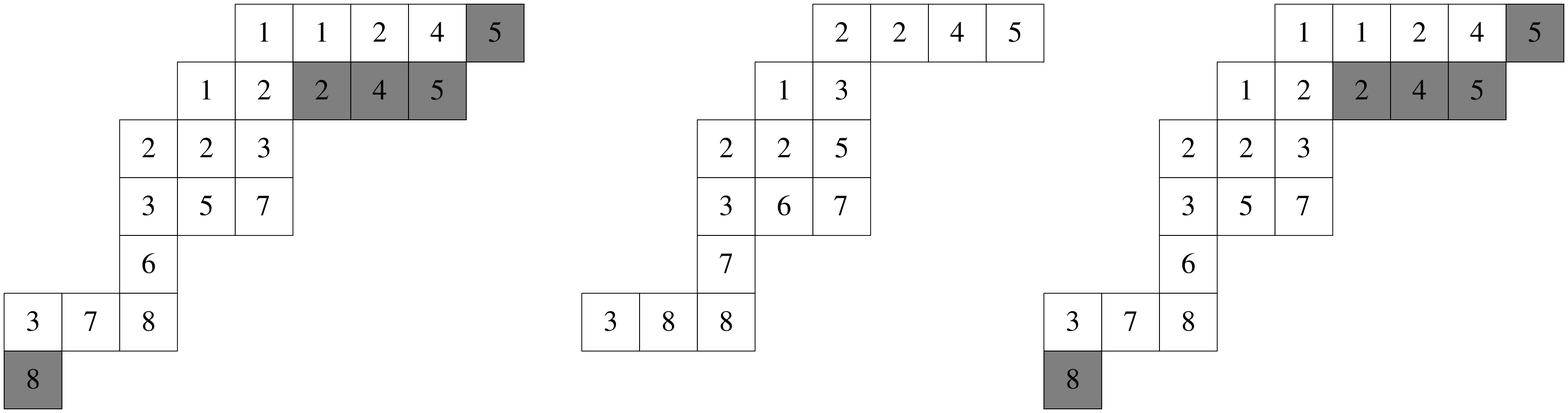}
 \end{center}
 \caption{}\label{fig7}\end{figure}
\end{exm}

It turns out that $\Phi$ is an involution, and $T$ is a fixed point if and only if $\mu = \mu^-$ and the while loop stops when $\lambda^+ = \lambda$. Such fixed points are in one-to-one correpondence with pairs $(S,v)$, where $S$ is a semistandard Young tableau of shape $\lambda/\mu$ and $v$ is a weakly increasing word. Indeed, if we stop the algorithm after the while loop, we have exactly such a pair, and given a pair $(S,v)$, we can insert the entries of $v$ from left to right into $S$ to get the corresponding $T$. Furthermore, if $T$ is not a fixed point, then $|\mu^{--}| = |\mu^-| \pm 1$. It is easy to see that this shows the skew Pieri rule. See \cite{amcn} for details and a precise proof.

\medskip

As mentioned in the introduction, the conjugate skew Pieri rule follows from SPR by applying the involution $\omega$ on the algebra of symmetric functions. There is, however, an involution in the spirit of Assaf-McNamara that proves CSPR.

\medskip

Fix $\lambda,\mu,r$. A term on the right-hand side is represented by a semistandard skew Young tableau of shape $\lambda^+/\mu^-$, where $\lambda^+/\lambda$ is a vertical strip, $\mu/\mu^-$ is a horizontal strip, and $|\lambda^+/\lambda|+|\mu/\mu^-| = r$. Such a tableau $T$ is weighted by $(-1)^{|\mu/\mu^-|}$. Let $i$ denote the bottom row of $\mu/\mu^-$ (unless $\mu = \mu^-$, in which case take $i = 0$). Now reverse insert from row $i'$, the bottom row of $\lambda^+/\lambda$, in $T$ (unless  $\lambda^+ = \lambda$). If the reverse insertion exits the diagram in row $\geq i$ (\emph{except} in the case when $\mu = \mu^-$ and the reverse insertion exits in row $0$), call this new diagram $\Psi(T)=\Psi_{\lambda,\mu,\lambda^+,\mu^-}(T)$. See Figure \ref{fig8}, left. If this reverse insertion exits the diagram in row $<i$, or if $\lambda^+ = \lambda$, insert from row $i$ in $T$ and call the result $\Psi(T)=\Psi_{\lambda,\mu,\lambda^+,\mu^-}(T)$. See Figure \ref{fig8}, middle. When $\mu = \mu^-$ and the reverse insertion exits in row $0$, take $\Psi(T) = \Psi_{\lambda,\mu,\lambda^+,\mu^-}(T) = T$. See Figure \ref{fig8}, right.

\begin{exm}
 For the skew semistandard Young tableau on the left of Figure \ref{fig8}, reverse insertion from row $9$ (the bottom row of $\lambda^+/\lambda$) exits in row $5$, which is weakly below the bottom row of $\mu/\mu^-$. Therefore we perform this reverse insertion, and the result is the left picture of Figure \ref{fig9}. For the skew semistandard Young tableau in the middle of Figure \ref{fig8}, reverse insertion from row $9$ (the bottom row of $\lambda^+/\lambda$) exits in row $4$, which is striclty above the bottom row of $\mu/\mu^-$. Therefore we insert from row $5$ (the bottom row of $\mu/\mu^-$), the result is the middle picture of Figure \ref{fig9}. For the skew semistandard Young tableau on the right of Figure \ref{fig8}, reverse insertion from row $9$ (the bottom row of $\lambda^+/\lambda$) exits in row $0$. This means that the tableau is a fixed point of $\Psi$. Therefore we perform this reverse insertion, and the result is the left picture of Figure \ref{fig9}. The right picture in Figure \ref{fig9} shows the skew semistandard Young tableau that we get if we repeatedly reverse insert from the bottom row of $\lambda^+/\lambda$; the exiting integers are $1,2,3,4,5,6$.

 \begin{figure}[ht!]
 \begin{center}
  \includegraphics[height=4.5cm]{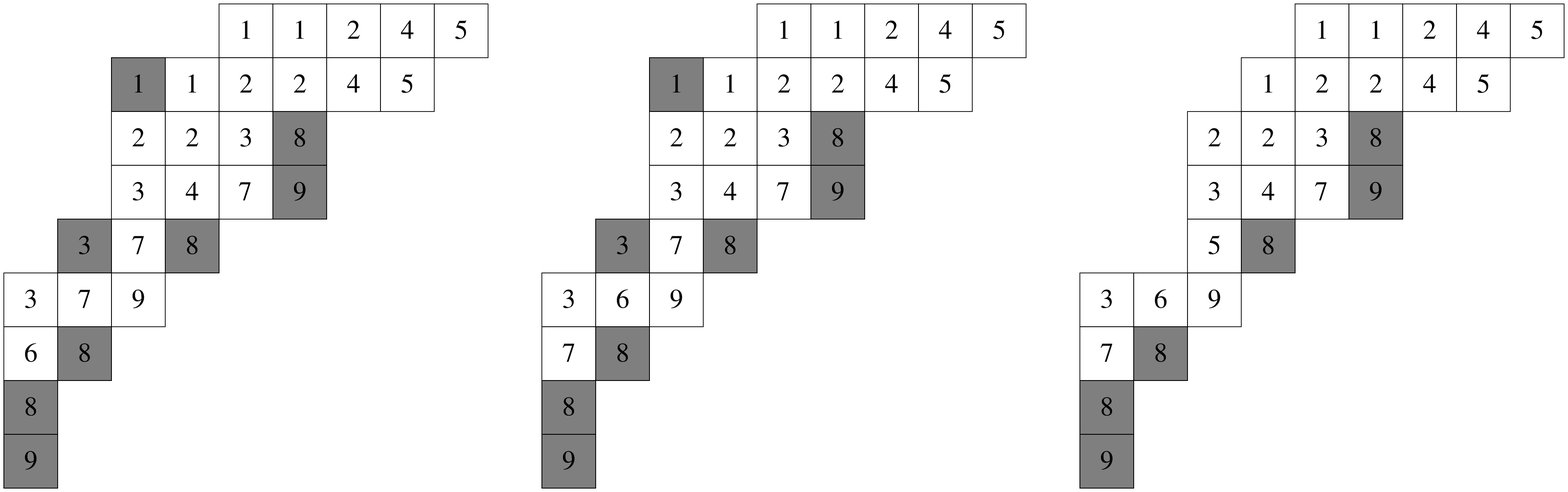}
 \end{center}
 \caption{}\label{fig8}\end{figure}

 \begin{figure}[ht!]
 \begin{center}
  \includegraphics[height=5cm]{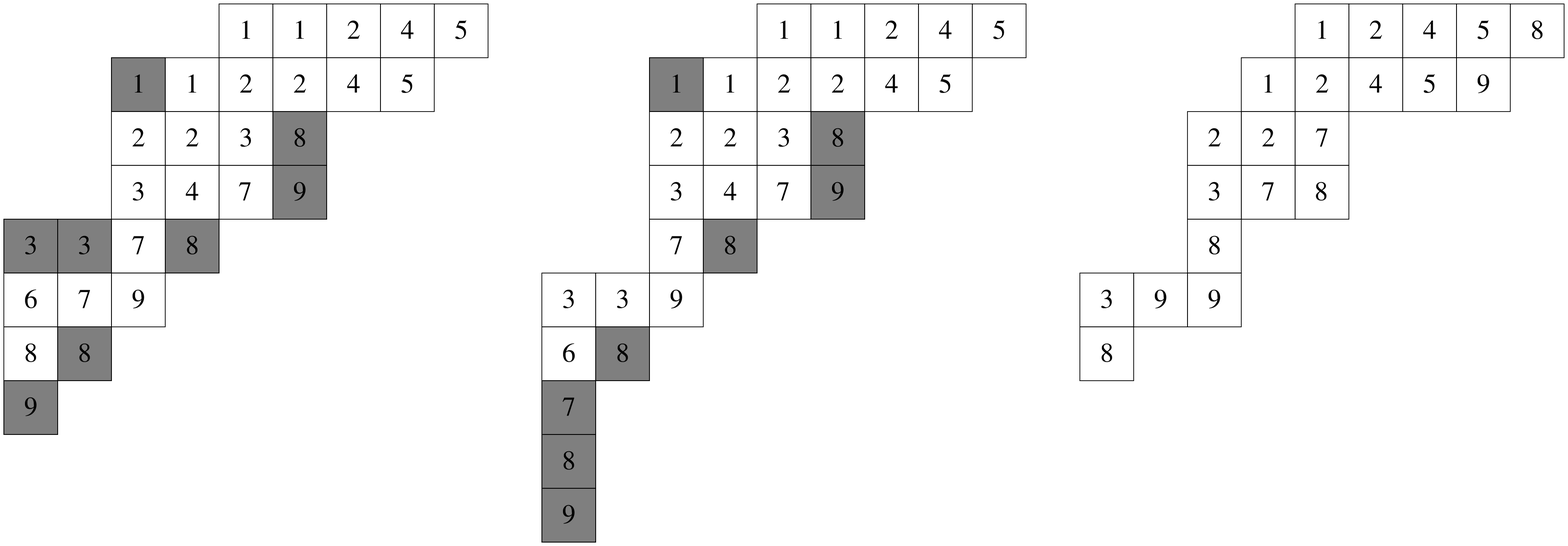}
 \end{center}
 \caption{}\label{fig9}\end{figure}
\end{exm}

\begin{prop}
 The map $\Psi_{\lambda,\mu,\lambda^+,\mu^-}$ is an involution that is sign-reversing except on fixed points. Furthermore, the fixed points are in a natural bijective correspondence with elements on the left-hand side of CSPR.
\end{prop}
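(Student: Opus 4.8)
The plan is to verify, in turn, that $\Psi$ is well defined, that it reverses the sign $(-1)^{|\mu/\mu^-|}$ away from its fixed points, that it is an involution, and that its fixed points are the promised objects. Throughout, write $i$ for the bottom row of $\mu/\mu^-$ (or $0$ if $\mu=\mu^-$) and $i'$ for the bottom row of $\lambda^+/\lambda$, call the three branches of the definition the \emph{reverse branch} (reverse insert from $i'$, exiting in row $\ge i$, excluding the exception), the \emph{forward branch} (insert from row $i$), and the \emph{fixed branch}, and write $\lambda^{++}/\mu^{--}$ for the shape of $\Psi(T)$.

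For well-definedness and the sign, I would argue as follows. In the reverse branch, reverse insertion deletes the unique cell of $\lambda^+/\lambda$ in row $i'$ (so $\lambda^{++}/\lambda$ is still a vertical strip) and, exiting in a row $i_0\ge i\ge 1$, places the exiting integer in a fresh cell of row $i_0$; since $i_0$ lies at or below the bottom of $\mu/\mu^-$ and $\mu^-\subseteq\mu$ are partitions, this cell occupies a column used by no other cell of $\mu/\mu^{--}$, so $\mu/\mu^{--}$ stays a horizontal strip and $|\mu/\mu^{--}|=|\mu/\mu^-|+1$. In the forward branch, inserting from row $i$ deletes the leftmost cell of row $i$ of $\mu/\mu^-$ (so $|\mu/\mu^{--}|=|\mu/\mu^-|-1$) and, by Lemma~\ref{insertion}(c) applied to the reverse insertion from $i'$ (which exits in a row $i_0<i$) and the insertion from row $i>i_0$, the inserted integer exits in a row $i''>i'$; that row contained no cell of the vertical strip $\lambda^+/\lambda$, so $\lambda^{++}/\lambda$ remains a vertical strip. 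Either way $|\lambda^{++}/\lambda|+|\mu/\mu^{--}|=r$ is preserved and the sign flips, while the fixed branch is visibly sign-preserving.

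The heart of the argument is that $\Psi$ is an involution, which I would check branch by branch using the inverse relationship between insertion and reverse insertion quoted before Lemma~\ref{insertion}, together with parts (a) and (b) of that lemma. If $T$ lies in the reverse branch, exiting in row $i_0$, then in $\Psi(T)$ the bottom row of $\mu/\mu^{--}$ is $i_0$ while the bottom row of $\lambda^{++}/\lambda$ has dropped to some $i'_{\mathrm{new}}<i'$; by Lemma~\ref{insertion}(a), reverse insertion from $i'_{\mathrm{new}}$ in $\Psi(T)$ exits in a row $<i_0$, so $\Psi$ takes its forward branch on $\Psi(T)$, namely inserting from row $i_0$, which undoes the original reverse insertion and returns $T$. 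If $T$ lies in the forward branch, then $\Psi(T)$ has a new cell of $\lambda^{++}/\lambda$ in a row $i''>i'$ (its new bottom row), while the bottom row of $\mu/\mu^{--}$ is some $h\le i$; reverse insertion from $i''$ in $\Psi(T)$ then exits in row $i\ge h$, and since $i\ge 1$ this is genuinely the reverse branch (not the row-$0$ exception, even when $\mu^{--}=\mu$), and it recovers $T$. One also notes that reverse-branch outputs have $\mu^{--}\ne\mu$ and forward-branch outputs leave the reverse insertion exiting in a positive row, so neither type can be a fixed point; hence $\Psi$ swaps the reverse and forward branches and fixes exactly the fixed branch.

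It remains to identify the fixed points --- the tableaux $T$ with $\mu^-=\mu$ for which reverse insertion from the bottom row of $\lambda^+/\lambda$ exits in row $0$ --- with the monomials of $s_{\lambda/\mu}\cdot s_{1^r}$. Here I would iterate: repeatedly reverse insert from the bottom row of the (shrinking) vertical strip $\lambda^+/\lambda$. The first step exits in row $0$ with an integer $k_1$ by hypothesis, and by Lemma~\ref{insertion}(b) every later step, performed from a strictly smaller row, again exits in row $0$, and with a strictly larger exiting integer. After $|\lambda^+/\lambda|=r$ steps one is left with an SSYT $S$ of shape $\lambda/\mu$ and integers $k_1<\cdots<k_r$, that is, an element of $s_{\lambda/\mu}$ paired with a column-strict tableau of shape $(1^r)$, of the correct total content. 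Reinserting $k_r,k_{r-1},\dots,k_1$ into $S$ in that order is the inverse map (the decreasing values force the exit rows to increase and the insertion paths not to cross, so a vertical strip reappears and the resulting $T$ has $k_1$ extracted in row $0$ by reverse insertion from the bottom row), and the two maps are mutually inverse by the insertion/reverse-insertion correspondence. The only place where real care is needed is the involution check, where one must track how the bottom rows of $\lambda^+/\lambda$ and $\mu/\mu^-$ move under $\Psi$ and invoke the non-crossing statements of Lemma~\ref{insertion} to be sure the second application of $\Psi$ selects the complementary branch.
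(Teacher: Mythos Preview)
Your proof is correct and follows essentially the same approach as the paper: the same branch-by-branch case analysis, the same appeals to Lemma~\ref{insertion}(a)--(c) to control where the second application of $\Psi$ exits, and the same iterated reverse insertion (via Lemma~\ref{insertion}(b)) to identify the fixed points. The one place you are slightly less explicit than the paper is the degenerate subcase $\lambda^+=\lambda$ of the forward branch, where there is no row $i'$ to feed into Lemma~\ref{insertion}(c) and the paper instead argues directly that the single new cell makes $\lambda^{++}/\lambda$ a one-cell vertical strip whose reverse insertion exits back in row $i$; conversely, you supply the well-definedness checks and the explicit inverse bijection on fixed points that the paper leaves implicit.
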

\begin{proof}
 Say that $\lambda^+ \neq \lambda$ and the reverse insertion from $i'$, the bottom row of $\lambda^+/\lambda$, exits in row $i_0$, $0 \neq i_0 \geq i$, where $i$ is the bottom row of $\mu/\mu^-$, and results in $S$ of shape $\lambda^{++}/\mu^{--}$. Recall that in this case, $\Psi(T) = S$. The partition $\mu^{--}$ differs from $\mu^-$ only in row $i_0$, and $\mu^{--}_{i_0} = \mu^{-}_{i_0} - 1$. Also, $\lambda^{++}$ differs from $\lambda^{+}$ only in row $i'$, and $\lambda^{++}_{i'} = \lambda^{+}_{i'} - 1$. Note that the bottom row of $\mu/\mu^{--}$ is $i_0$. If $\lambda^{++} = \lambda$, then $\Psi(S)$ is obtained by inserting from row $i_0$ in $S$, which is $T$ (because insertion and reverse insertion are inverse operations). If $\lambda^{++} \neq \lambda$, then the bottom row of $\lambda^{++}/\lambda$ is strictly above $i'$; furthermore, reverse insertion from this row exits in row $< i_0$ by Lemma \ref{insertion}, part (a). So we also obtain $\Psi(S)$ by inserting from row $i_0$ in $S$, and we get $T$.\\
 Now assume that $\lambda^+ \neq \lambda$ and the reverse insertion from $i'$, the bottom row of $\lambda^+/\lambda$, exits in row $< i$. Then $S = \Psi(T)$ of shape $\lambda^{++}/\mu^{--}$ is the result of inserting from row $i$ in $T$, assume that this insertion exits in row $i''$. We know that $\mu^{--}$ differs from $\mu^-$ only in row $i$, $\mu^{--}_{i} = \mu^{-}_{i} + 1$, and $\lambda^{++}$ differs from $\lambda^{+}$ only in row $i''$, $\lambda^{++}_{i''} = \lambda^{+}_{i''} + 1$. By Lemma \ref{insertion}, part (c), $i'' > i'$. That means that when we perform $\Psi$ on $S$, we reverse insert from row $i''$ in $S$. The reverse insertion results in $T$ and exits in row $i$, which is weakly below the bottom row of $\mu/\mu^{--}$, so $\Psi(S) = T$.\\
 If $\lambda^+ = \lambda$, we obtain $S = \Psi(T)$ of shape $\lambda^{++}/\mu^{--}$ by inserting from row $i$ in $T$, say that the insertion exits in row $i'$. In $S$, $\lambda^+/\lambda$ has only one cell, which is in row $i'$. Furthermore, reverse insertion from row $i'$ in $S$ exits in row $i$, which is weakly below the bottom row of $\mu/\mu^{--}$. So the result of this reverse insertion, $T$, is also $\Psi(S)$.\\
 Finally, assume that $T$ is a fixed point, i.e.\ that $\mu = \mu^-$ and that the reverse insertion from row $i'$, the bottom row of $\lambda^+/\lambda$, exits in row $0$. Call the resulting tableau $T_1$ (of shape $\lambda^{++}/\mu$) and the exiting integer $k_1$. By Lemma \ref{insertion}, part (b), that means that if we again reverse insert from the bottom row of $\lambda^+_1/\lambda$ in $T_1$, the reverse insertion again exits in row $0$, and the exiting integer $k_2$ is strictly greater than $k_1$. Call the resulting tableau $T_2$, and continue. After $r$ steps, we have a semistandard Young tableau $S = T_r$ of shape $\lambda/\mu$, and a strictly decreasing word $w = k_r k_{r-1} \cdots k_1$. Such pairs $(S,w)$ are obviously enumarated by the left-hand side of CSPR.
\end{proof}

\section{A bijective proof of the main theorem} \label{inv}

The first step of our proof is to interpret the right-hand side of SQMNR' as a weighted sum over some combinatorial objects. The appropriate objects turn out to be skew semistandard Young tableaux with some cells colored gray. To motivate these colorings, observe the following. If we ``glue'' together a vertical strip and a horizontal strip in such a way that the result is a skew diagram, then this skew diagram cannot have any $2 \times 2$ squares. In other words, it is a broken ribbon. This also holds the other way around: if we are given a broken ribbon, we can break it up into a vertical strip and a horizontal strip. See Figure \ref{fig10} for two examples. Note that the right example is special: the white cells (i.e.\ the cells of $\mu$ and the cells of $\lambda/\mu$ we put in the horizontal strip) form a partition. In other words, the cells of the horizontal strip are never have cells of vertical strip to the left or above them.

\begin{figure}[ht!]
 \begin{center}
  \includegraphics[height=4cm]{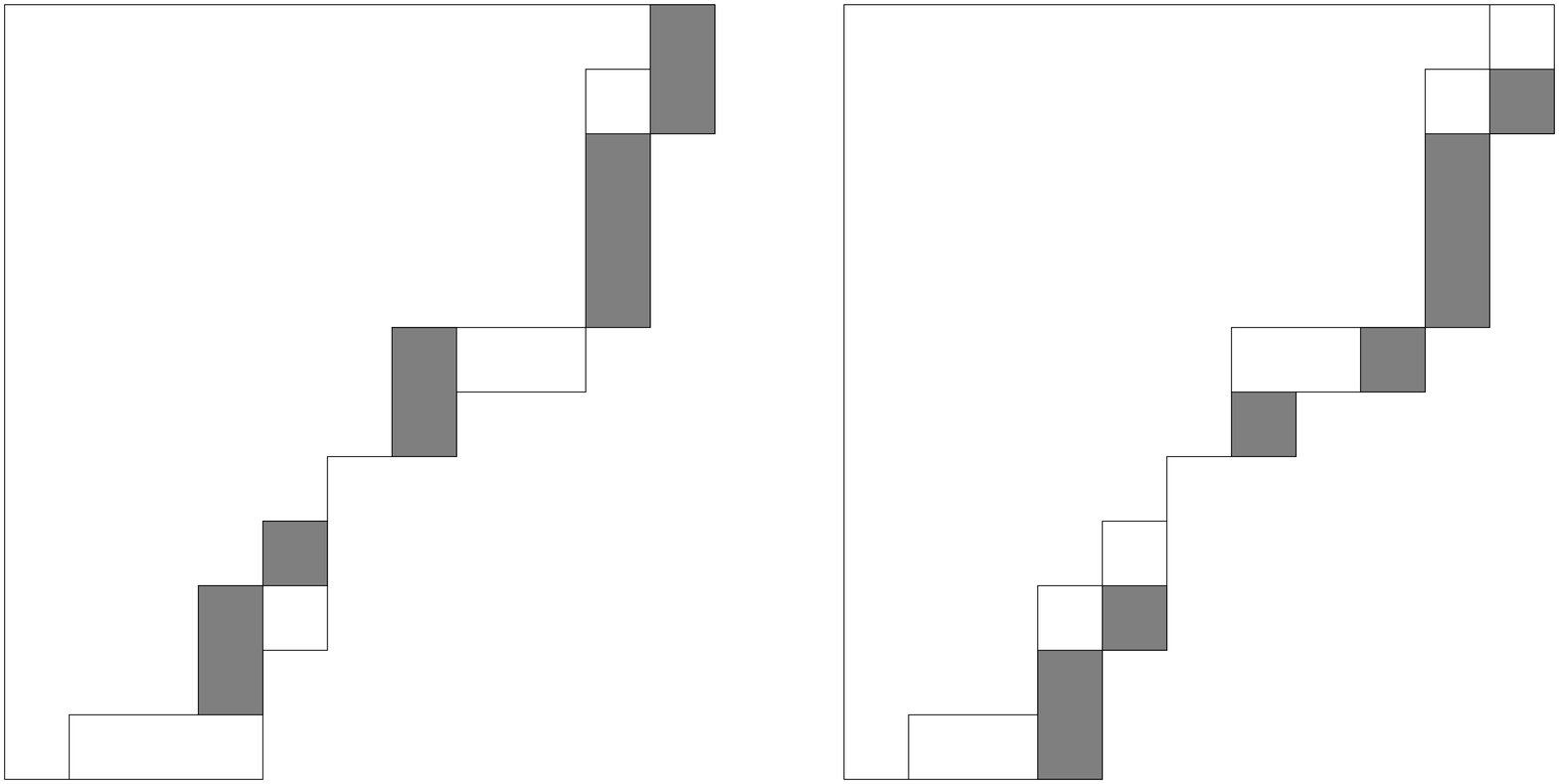}
 \end{center}
\caption{}\label{fig10}\end{figure}

\medskip

Let us multiply both sides of SQMNR' by $1-q$ and call this statement SQMNR'':

$$s_{\lambda/\mu} \cdot \left( \sum_{\tau \vdash r}
(1-q)^{\ell(\tau)} m_\tau\right) = \sum_{\lambda^+,\mu^-} (-1)^{|\mu/\mu^-|}(-q)^{\hgt(\lambda^+/\lambda)+\wt(\mu/\mu^-)} (1-q)^{\rib(\lambda^+/\lambda)+\rib(\mu/\mu^-)} s_{\lambda^+/\mu^-},$$

We have fixed $\lambda,\mu,r$. Say that we are given $\lambda^+,\mu^-$ such that $\lambda^+/\lambda$ and $\mu/\mu^-$ are broken ribbons with $|\lambda^+/\lambda| + |\mu/\mu^-| = r$, and a skew semistandrad Young tableau $T$ of shape $\lambda^+/\mu^-$. Our first goal is to break up each of the broken ribbons $\lambda^+/\lambda$ and $\mu/\mu^-$ into a vertical strip and a horizontal strip. More precisely, we wish to choose partitions $\lambda',\mu'$ such that $\lambda'/\lambda$ and $\mu'/\mu^-$ are horizontal strips, and $\lambda^+/\lambda'$ and $\mu/\mu'$ are vertical strips. We weight such a selection with
$$(-1)^{|\mu/\mu^-|} (-q)^{|\lambda^+/\lambda'|+|\mu'/\mu^-|}.$$

We color the cells of $\lambda^+/\lambda'$ and $\mu'/\mu^-$ gray and leave the other cells white. So our requirements are saying that the gray cells of $\lambda^+/\lambda$ and the white cells of $\mu/\mu^-$ form a vertical strip, and the white cells of $\lambda^+/\lambda$ and the gray cells of $\mu/\mu^-$ form a horizontal strip; also, the white cells form a diagram of some shape $\lambda'/\mu'$ for $\lambda \subseteq \lambda' \subseteq \lambda^+$, $\mu^- \subseteq \mu' \subseteq \mu$. Furthermore, the weight of such an object is $(-1)^{|\mu/\mu^-|} (-q)^j$, where $j$ is the number of gray cells.

\begin{exm}
 Figure \ref{fig11} shows four examples with weights $q^{16}$, $q^{14}$, $q^{13}$ and $q^{11}$.
\begin{figure}[ht!]
 \begin{center}
  \includegraphics[height=3.5cm]{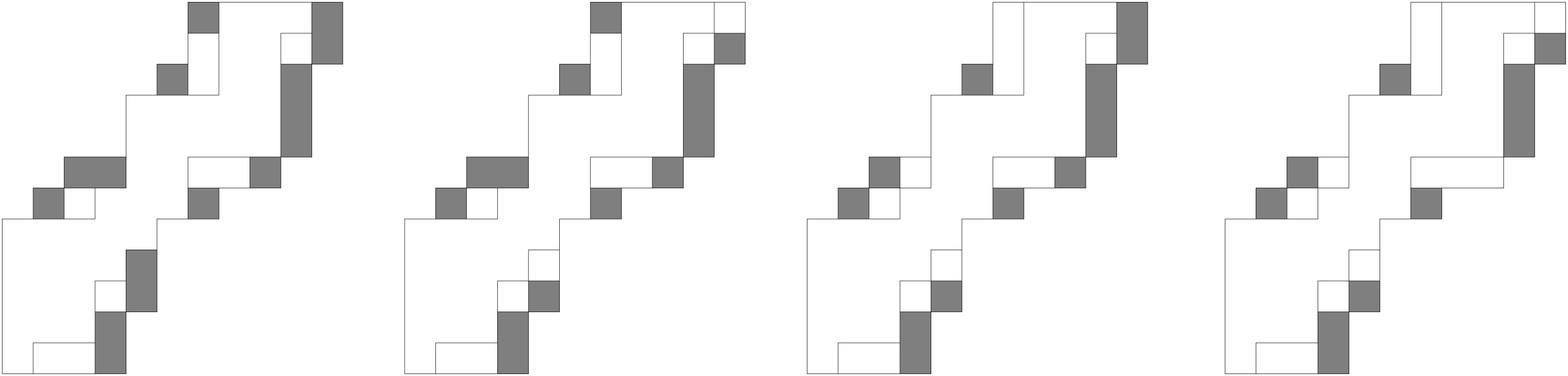}
 \end{center}
\caption{}\label{fig11}\end{figure}
\end{exm}

We claim that these objects indeed enumerate the right-hand side of SQMNR'.

\begin{lemma}
 For fixed $\lambda,\mu,\lambda^+,\mu^-$, we have
 $$\sum_{\lambda',\mu'} (-1)^{|\mu/\mu^-|} (-q)^{|\lambda^+/\lambda'|+|\mu'/\mu^-|} = (-1)^{|\mu/\mu^-|} (-q)^{\hgt(\lambda^+/\lambda)+\wt(\mu/\mu^-)} (1-q)^{\rib(\lambda^+/\lambda)+\rib(\mu/\mu^-)},$$
 where the sum on the left runs over all $\lambda',\mu'$ such that $\lambda'/\lambda$ and $\mu'/\mu^-$ are horizontal strips, and $\lambda^+/\lambda'$ and $\mu/\mu'$ are vertical strips.
\end{lemma}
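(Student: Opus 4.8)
The plan is to strip off the common sign factor, split the sum into a $\lambda$-part and a $\mu$-part, reduce each part to a product over the ribbon components, and finally analyze one ribbon by hand.

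First I would cancel the factor $(-1)^{|\mu/\mu^-|}$, which appears on both sides. The constraints on $\lambda'$ (namely $\lambda\subseteq\lambda'\subseteq\lambda^+$ with $\lambda'/\lambda$ a horizontal strip and $\lambda^+/\lambda'$ a vertical strip) and on $\mu'$ (namely $\mu^-\subseteq\mu'\subseteq\mu$ with $\mu'/\mu^-$ a horizontal strip and $\mu/\mu'$ a vertical strip) involve disjoint data, and $|\lambda^+/\lambda'|$ depends only on $\lambda'$ while $|\mu'/\mu^-|$ depends only on $\mu'$. Hence the left-hand sum factors, and it suffices to prove the two identities
$$\sum_{\lambda'}(-q)^{|\lambda^+/\lambda'|}=(-q)^{\hgt(\lambda^+/\lambda)}(1-q)^{\rib(\lambda^+/\lambda)},\qquad \sum_{\mu'}(-q)^{|\mu'/\mu^-|}=(-q)^{\wt(\mu/\mu^-)}(1-q)^{\rib(\mu/\mu^-)}.$$

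Next I would reduce each of these to the case of a single ribbon. A broken ribbon $\lambda^+/\lambda$ is a disjoint union of its $\rib(\lambda^+/\lambda)$ ribbon components, and since every row and every column of a skew shape is a single interval of cells, distinct components occupy disjoint rows and disjoint columns; moreover, whenever a row lying in one component is adjacent to a row lying in another, one automatically has $\lambda^+_{i+1}\le\lambda_i$, which forces the partition condition on $\lambda'$ and both strip conditions to hold across that pair no matter what is chosen. Therefore choosing $\lambda'$ amounts to choosing, independently for each component, the entries $\lambda'_i$ in its rows, so the sum over $\lambda'$ is a product over the components, and likewise for $\mu/\mu^-$. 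Since $\hgt$, $\wt$ and $\rib$ are additive over components and each component is a single ribbon (with $\rib=1$), the two identities follow from the following statement about one ribbon $R$ of height $h$ and width $w$ (so $|R|=h+w+1$): the admissible decompositions of $R$ into a horizontal strip adjacent to the inner shape and a vertical strip adjacent to the outer shape number exactly two, with vertical parts of sizes $h$ and $h+1$, equivalently horizontal parts of sizes $w+1$ and $w$. Weighting by the size of the vertical part gives $(-q)^h+(-q)^{h+1}=(-q)^h(1-q)$ and by the size of the horizontal part gives $(-q)^w+(-q)^{w+1}=(-q)^w(1-q)$, which are exactly what the two identities require after multiplying over components.

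The heart of the argument, and what I expect to be the main obstacle, is this single-ribbon claim. Say $R$ occupies rows $a,\dots,b$ with $b-a=h$, and recall that consecutive rows of a ribbon overlap in exactly one column, i.e.\ $\lambda^+_{i+1}=\lambda_i+1$ for $a\le i\le b-1$. If the rightmost cell of row $i+1$ (the cell in column $\lambda_i+1$) belonged to the horizontal strip $\lambda'/\lambda$ we would have $\lambda'_{i+1}\ge\lambda_i+1$, contradicting the horizontal-strip inequality $\lambda'_{i+1}\le\lambda_i$; so for every such $i$ that cell lies in the vertical strip, forcing $\lambda'_{i+1}\le\lambda^+_{i+1}-1$. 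Combined with the vertical-strip inequality $\lambda^+_{i+1}-\lambda'_{i+1}\le1$, this pins down $\lambda'_i=\lambda^+_i-1$ for all $i\in\{a+1,\dots,b\}$ (one cell of the vertical part in each of these rows), and in the top row the only freedom left is $\lambda'_a\in\{\lambda^+_a-1,\lambda^+_a\}$. It then remains to check that both of these choices genuinely give a partition $\lambda'$ satisfying both strip conditions in full, not merely at the ``turn'' cells — a short verification using $\lambda^+_{a-1}\ge\lambda^+_a$ and $\lambda_{a+1}=\lambda^+_{a+1}-1=\lambda_a$ — which produces exactly the two claimed decompositions. Reassembling the pieces (product over components, then product of the $\lambda$- and $\mu$-sums, then reinstating $(-1)^{|\mu/\mu^-|}$) finishes the proof.
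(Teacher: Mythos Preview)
Your proof is correct and follows essentially the same approach as the paper's own proof: both identify that, within each ribbon of $\lambda^+/\lambda$ (respectively $\mu/\mu^-$), every cell is forced to lie in the vertical or horizontal piece by the neighbor constraints except the northeast cell, which can go either way, yielding a factor of $(1-q)$ per ribbon on top of the forced $(-q)^{\hgt}$ (respectively $(-q)^{\wt}$). The paper argues this cell-by-cell over the whole broken ribbon and then sums with a binomial coefficient, while you first factor the sum over $\lambda'$ and $\mu'$ and over ribbon components and then analyze a single ribbon row-by-row; these are just two organizations of the same computation.

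One small typo: in your final verification you write ``$\lambda_{a+1}=\lambda^+_{a+1}-1=\lambda_a$'', but what you mean (and use) is $\lambda'_{a+1}=\lambda^+_{a+1}-1=\lambda_a$, since $\lambda_{a+1}$ itself need not equal $\lambda^+_{a+1}-1$ when row $a+1$ of the ribbon contains more than one cell.
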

\begin{proof}
 For each cell of $\lambda^+/\lambda$, we have to decide whether or not to put it in $\lambda'/\lambda$ or in $\lambda^+/\lambda'$ (i.e.\ whether to make it white or gray). If a cell in $\lambda^+/\lambda$ has a right neighbor in $\lambda^+/\lambda$, it cannot be in $\lambda^+/\lambda'$, since its right neighbor would also have to be in $\lambda^+/\lambda'$, and this would contradict the requirement that $\lambda^+/\lambda'$ is a vertical strip. Similarly, if a cell in $\lambda^+/\lambda$ has an upper neighbor in $\lambda^+/\lambda$, it cannot be in $\lambda'/\lambda$, since its upper neighbor would also have to be in $\lambda'/\lambda$, and this would contradict the requirement that $\lambda'/\lambda$ is a horizontal strip.\\
 This means that the colors of all the cells in $\lambda^+/\lambda$ are determined, except for the top right cell of each ribbon of $\lambda^+/\lambda$, which can be either white or gray.\\
 If a cell in $\mu/\mu^-$ has a right neighbor in $\mu/\mu^-$, it cannot be in $\mu/\mu'$, since its right neighbor would also have to be in $\mu/\mu'$, and this would contradict the requirement that $\mu/\mu'$ is a vertical strip. Similarly, if a cell in $\mu/\mu^-$ has an upper neighbor in $\mu/\mu^-$, it cannot be in $\mu'/\mu^-$, since its upper neighbor would also have to be in $\mu'/\mu^-$, and this would contradict the requirement that $\mu'/\mu^-$ is a horizontal strip.\\
 This means that the colors of all the cells in $\mu/\mu^-$ are determined, except for the top right cell of each ribbon of $\mu/\mu^-$, which can be either white or gray.\\
 In other words, we have two choices for each upper right cell of each ribbon of $(\lambda^+/\lambda) \cup (\mu/\mu^-)$. This already means that there are $2^{\rib(\lambda^+/\lambda)+\rib(\mu/\mu^-)}$ terms on the left-hand side.\\
 We have at least $\hgt(\lambda^+/\lambda)$ gray cells in $\lambda^+/\lambda$, and at least $\wt(\mu/\mu^-)$ gray cells in $\mu/\mu^-$. So the weight of a term on the left-hand side is
 $$(-1)^{|\mu/\mu^-|} (-q)^{\hgt(\lambda^+/\lambda)+\wt(\mu/\mu^-)} (-q)^j,$$
 where $j$ is the number of cells that are gray by choice, and these choices are made independently. Of course,
 $$\sum \binom{\rib(\lambda^+/\lambda)+\rib(\mu/\mu^-)}j (-1)^{|\mu/\mu^-|} (-q)^{\hgt(\lambda^+/\lambda)+\wt(\mu/\mu^-)} (-q)^j = $$
 $$= (-1)^{|\mu/\mu^-|} (-q)^{\hgt(\lambda^+/\lambda)+\wt(\mu/\mu^-)} (1-q)^{\rib(\lambda^+/\lambda)+\rib(\mu/\mu^-)},$$
 which finishes the proof of the lemma.
\end{proof}

We have managed to rewrite SQMNR'' as follows:
$$s_{\lambda/\mu} \cdot \left( \sum_{\tau \vdash r}
(1-q)^{\ell(\tau)} m_\tau\right) = \sum_{\lambda^+,\lambda',\mu^-,\mu'} (-1)^{|\mu/\mu^-|}(-q)^{|\lambda^+/\lambda'|+|\mu'/\mu^-|} s_{\lambda^+/\mu^-},$$
where the sum is over partitions $\lambda^+,\lambda',\mu^-,\mu'$ such that $\lambda^+/\lambda$ and $\mu/\mu^-$ are broken ribbons with $|\lambda^+/\lambda| + |\mu/\mu^-| = r$, $\lambda'/\lambda$ and $\mu'/\mu^-$ are horizontal strips, and $\lambda^+/\lambda'$ and $\mu/\mu'$ are vertical strips.

\medskip

For fixed $\lambda,\mu,r$, a term on the right-hand side of SQMNR'' therefore corresponds to a semistandard Young tableau $T$ with some cells colored white and some cells colored gray, such that the following properties are satisfied:
\begin{itemize}
 \item the shape of $T$ is $\lambda^+/\mu^-$ for some $\lambda^+ \supseteq \lambda$ and $\mu^- \subseteq \mu$, $|\lambda^+/\lambda| + |\mu/\mu^-| = r$, and $\lambda^+/\lambda$ and $\mu/\mu^-$ are broken ribbons;
 \item the white cells form a skew diagram $\lambda'/\mu'$ for some partitions $\lambda',\mu'$;
 \item the white cells in $\lambda^+/\lambda$ form a horizontal strip, and the white cells in $\mu/\mu^-$ form a vertical strip;
 \item the gray cells are in $(\lambda^+/\lambda) \cup (\mu/\mu^-)$, and they form a vertical strip in $\lambda^+/\lambda$ and a horizontal strip in $\mu/\mu^-$;
\end{itemize}
We call such an object a \emph{colored tableau} of shape $(\lambda,\mu,\lambda',\mu',\lambda^+,\mu^-)$. We weight a colored tableau by
$$(-1)^{|\mu/\mu^-|} (-q)^{|\lambda^+/\lambda'|+|\mu'/\mu^-|}.$$

Now perform the involution $\Psi$ on the gray cells of a colored tableau. More specifically, find $\Psi_{\lambda',\mu',\lambda^+,\mu^-}(T)$. Since $\lambda^+/\lambda'$ is a vertical strip and $\mu'/\mu^-$ is a horizontal strip, the map is well defined. One gray cell is removed, and one gray cell is added in the process. The result is a colored tableau $T'$ of shape $(\lambda,\mu,\lambda',\mu',\lambda^{++},\mu^{--})$ for some $\lambda^{++}$, $\mu^{--}$; it has the same white cells as $T$, the same number of gray cells as $T$, and with the property that $|\mu/\mu^{--}| = |\mu/\mu^{-}| \pm 1$ unless $T = T'$ is a fixed point.

\medskip

This already cancels a large number of terms. The ones that remain correspond to fixed points of $\Psi_{\lambda',\mu',\lambda^+,\mu^-}$. Each such fixed point consists of a a semistandard skew Young tableau $S$ of some shape $\lambda'/\mu'$, where $\lambda'/\lambda$ is a horizontal strip and $\mu/\mu'$ is a vertical strip, and of a strictly decreasing word $w$. Such an object is weighted by $(-1)^{|\mu/\mu'|} (-q)^{|w|}$.

\medskip

Now apply Assaf-McNamara involution $\Phi$ to the tableau. More specifically, find $\Phi_{\lambda,\mu,\lambda',\mu'}(S)$. The result is a semistandard Young tableaux $S'$ of some shape $\lambda^{+++}/\mu^{---}$ with the property that $|\mu/\mu^{---}| = |\mu/\mu^{-}| \pm 1$ unless $S = S'$ is a fixed point. This cancels more terms. The ones that remain correspond to fixed points of $\Phi_{\lambda,\mu,\lambda',\mu'}$, together with a strictly decreasing word $w$ and the weight $(-q)^{|w|}$. Each such fixed point consists of a semistandard Young tableau $R$ of shape $\lambda/\mu$, together with a weakly increasing word $v$ and a strictly decreasing word $w$. Such an object is weighted by $(-q)^{|w|}$. Furthermore, every such triple $(R,v,w)$ appears as a non-canceling term on the right. Indeed, insert the elements of $v$ into $R$ to get a semistandard Young tableau $S$ of shape $\lambda'/\mu$ for some partition $\lambda'$ so that $\lambda'/\lambda$ is a horizontal strip; then insert the elements of $w$ into $S$ and color the new cells gray to get a colored tableau $T$ of shape $\lambda^+/\mu$ for some partition $\lambda^+$ so that $\lambda^+/\lambda'$ is a vertical strip. Then applying $\Psi$ and $\Phi$ to $T$ yields $(R,v,w)$.

\medskip

It remains to enumerate all triples $(R,v,w)$. If we want $(v,w)$ to contain, say, $\tau_i$ copies of $i$, $1 \leq i \leq \ell$, we can choose any $j$-subset of $\{1,\ldots,\ell\}$ and put the elements in decreasing order to form $w$, and then put the remaining elements of the multiset $\{1^{\tau_1}, 2^{\tau_2}, \ldots,\ell^{\tau_\ell}\}$ in weakly increasing order to form $v$. Furthermore, the weight of $(R,v,w)$ for these $v$ and $w$ is $(-q)^j$. That means that the right-hand side of SQMNR'' becomes, after cancelations,
$$s_{\lambda/\mu} \cdot \left( \sum_{\tau \vdash r} (1-q)^{\ell(\tau)} m_\tau\right),$$
which is the left-hand side of SQMNR''.

\section{A proof via skew Littlewood-Richardson rule} \label{another}

It is informative to use Lam-Lauve-Sotille's \cite{lls} skew Littlewood-Richardson rule to find another proof of SQMNR. The first lemma is a simple computation that allows us to replace the quantum power sum functions with ``hook'' Schur functions and should remind the reader of the enumeration of pairs $(v,w)$ for $v$ a weakly increasing word,$v$ and strictly decreasing word $w$ at the end of the previous section. The second lemma is technical and states that a certain property is preserved in jeu de taquin slides. And the third lemma sheds some light on connections between jeu de taquin, hooks, and decompositions of broken ribbons into vertical and horizontal strips.

\begin{lemma} \label{lemma1}
 For all $r$, we have
 $$\wp_r =  \sum_{k = 1}^{r} (-q)^{r-k} s_{k,1^{r-k}}.$$
\end{lemma}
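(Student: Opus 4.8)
The plan is to verify the identity $\wp_r = \sum_{k=1}^r (-q)^{r-k} s_{k,1^{r-k}}$ directly, by expanding both sides in the monomial basis $\{m_\tau\}$ and comparing coefficients. On the left, by definition $\wp_r = \sum_{\tau \vdash r} (-1)^{\ell(\tau)-1}(q-1)^{\ell(\tau)-1} m_\tau$, so the coefficient of $m_\tau$ is $(-1)^{\ell(\tau)-1}(q-1)^{\ell(\tau)-1} = (1-q)^{\ell(\tau)-1}$. So it suffices to show that the coefficient of $m_\tau$ in $\sum_{k=1}^r (-q)^{r-k} s_{k,1^{r-k}}$ equals $(1-q)^{\ell(\tau)-1}$ for every partition $\tau \vdash r$.

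First I would recall the well-known monomial expansion of a hook Schur function: $s_{k,1^{r-k}} = \sum_\tau K_{(k,1^{r-k}),\tau}\, m_\tau$, where $K$ is the Kostka number counting semistandard tableaux of hook shape $(k,1^{r-k})$ and content $\tau$. For a fixed content $\tau$ with $\ell = \ell(\tau)$ parts, a semistandard filling of the hook $(k,1^{r-k})$ is determined by choosing which of the $\ell$ distinct values appear in the (strictly increasing) leg: the value $1$ must be placed in the corner and hence must also begin the arm, and the remaining leg entries are any $(r-k)$-subset of $\{2,\dots,\ell\}$ placed in increasing order down the leg, with everything else going weakly along the arm. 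Thus $K_{(k,1^{r-k}),\tau} = \binom{\ell-1}{r-k}$ (and this is $0$ unless $1 \le r-k \le \ell-1$, i.e. $\ell - r + 1 \le k \le r$, consistent with the requirement that a hook of size $r$ with content $\tau$ exists only when $\tau$ has at least... one part, which is automatic). Substituting $j = r-k$, the coefficient of $m_\tau$ on the right-hand side is
\[
\sum_{k=1}^r (-q)^{r-k}\binom{\ell-1}{r-k} = \sum_{j=0}^{\ell-1} \binom{\ell-1}{j}(-q)^j = (1-q)^{\ell-1},
\]
by the binomial theorem, which matches the left-hand side.

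I do not expect a genuine obstacle here; the one point requiring a little care is the tableau count $K_{(k,1^{r-k}),\tau} = \binom{\ell(\tau)-1}{r-k}$, in particular checking that the value $1$ is forced into the corner (so that the arm always starts with a $1$ and only the $\ell-1$ values $\{2,\dots,\ell\}$ are "free" to be distributed between arm and leg) and that, once the leg entries are chosen, the arm filling is uniquely the weakly increasing arrangement of the leftover multiset. An alternative route, if one prefers to avoid the tableau bookkeeping, is to use the dual Jacobi–Trudi (Nägeli) expansion $s_{k,1^{r-k}} = \sum_{i} (-1)^i e_{\,\cdot\,} h_{\,\cdot\,}$ for hooks, or simply to cite that $\sum_{k\ge1} s_{k,1^{m-k}} t^{\,\cdot\,}$ has a known generating function; but the monomial/Kostka computation above is the most self-contained and is the approach I would write up.
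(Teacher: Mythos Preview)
Your proposal is correct and follows essentially the same approach as the paper's own proof: both expand in the monomial basis, compute the Kostka number $K_{(k,1^{r-k}),\tau}=\binom{\ell(\tau)-1}{r-k}$ via the same tableau argument (the entry $1$ forced into the corner, then a choice of an $(r-k)$-subset of $\{2,\dots,\ell\}$ for the strictly increasing column), and conclude with the binomial theorem to obtain $(1-q)^{\ell(\tau)-1}$.
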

\begin{proof}
 Let us compute the expansion of the right-hand side in basis $m_\lambda$. Given $\lambda = (\lambda_1,\lambda_2,\ldots,\lambda_\ell)$ and $k$, $1 \leq k \leq r$, it is easy to count the number of semistandard Young tableaux of shape $(k,1^{r-k})$ and type $\lambda$: place $1$ in cell $(1,1)$, choose the elements to place in the (only) cell of rows $2,\ldots,r-k+1$ in $\binom {\ell-1}{r-k}$ ways and place them in the first column in strictly increasing order, and place the remaining elements in weakly increasing order in the first row. This tells us that the coefficient of $m_\lambda$ in the right-hand side is
 $$\sum_{k = 1}^{r} (-q)^{r-k} \binom {\ell-1} {r-k} = (1-q)^{\ell-1},$$
 which is also the coefficient of $m_\lambda$ in $\wp_r$.
\end{proof}

For the second lemma, we have to recall the celebrated \emph{backward (respectively, forward) jeu de taquin slide} due to Sch\" utzenberger. Say we are given a skew standard Young tableau of shape $\lambda/\mu$. Let $c = c_0$ be a cell that is not in $\lambda/\mu$, shares the right or lower edge (respectively, the left or upper edge) with $\lambda/\mu$, and such that $\lambda/\mu \cup c$ is a valid skew diagram. Let $c_1$ be the cell of $\lambda/\mu$ that shares an edge with $c_0$; if there are two such cells, take the one with the smaller entry (respectively, larger entry). Then move the entry occupying $c_1$ to $c_0$, look at the tableau entries below or to the right of $b_1$ (respectively, above or to the left of $b_1$), and repeat the same procedure. We continue until we reach the boundary, say in $m$ moves. The new tableau is a standard Young tableau and is called $\jdt_c(T)$. We say that $c_0, c_1,\ldots,c_m$ is the \emph{path} of the slide.

\medskip

If $T$ is a skew standard Young tableau, we can repeatedly perform backward jeu de taquin slides. The final result $S$ is a standard Young tableau of straight shape, and it is independent of the choices during the execution of the algorithm. We say that $T$ \emph{rectifies} to $S$. See \cite[Appendix A1.2]{stanley}.

\medskip

Say we are given a standard Young tableau $T$ of shape $\lambda/\mu$. We say that $T$ has the \emph{$k$-NE property}1 if the following statements are true:
\begin{itemize}
 \item[NE1] the entry in the last cell of the first non-empty row (i.e.\ the northeast cell) of $\lambda/\mu$ is $k$;
 \item[NE2] if $i,j < k$, then $i$ appears strictly to the left of $j$ in $T$;
 \item[NE3] if $i,j > k$, then $i$ appears strictly above $j$ in $T$.
\end{itemize}

\medskip

The following figure shows some tableaux with $4$-NE property.

\begin{figure}[ht!]
 \begin{center}
  \includegraphics[height=2.5cm]{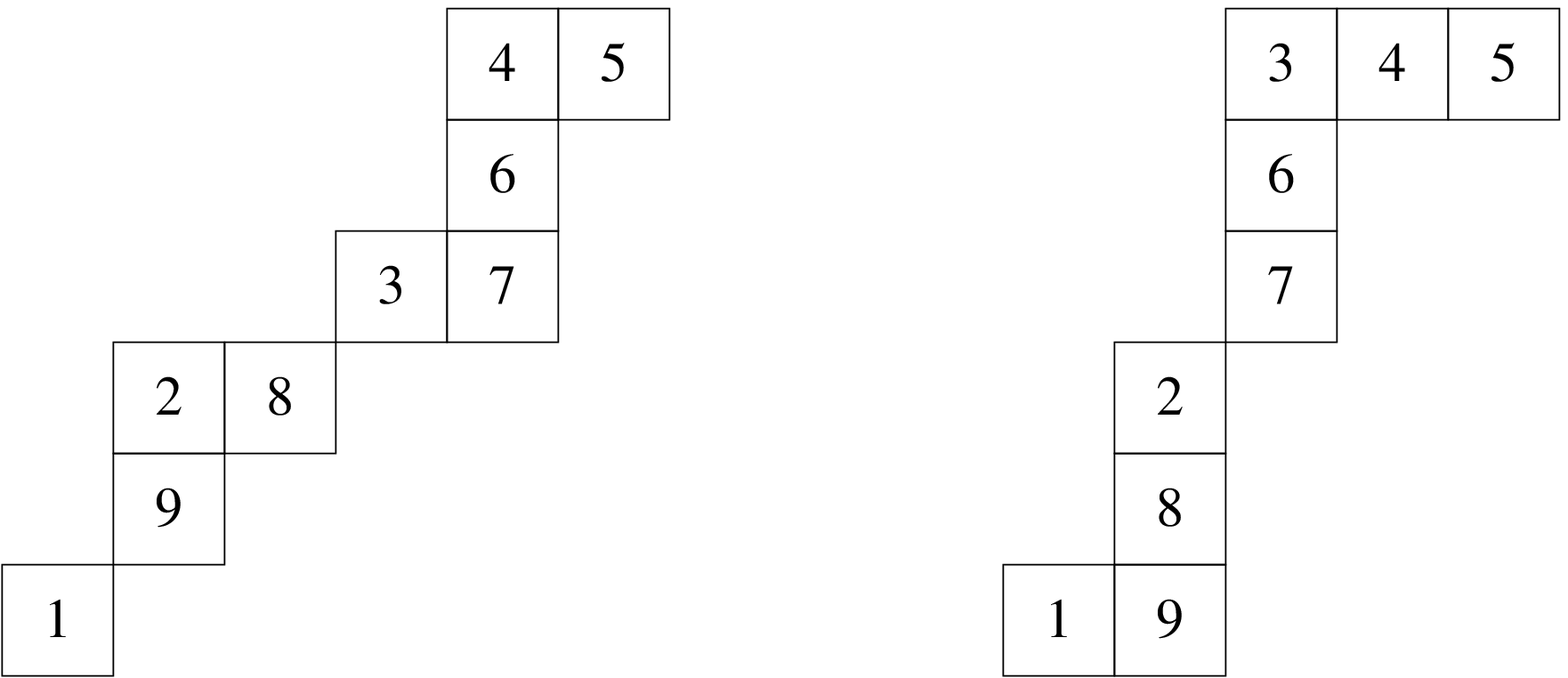}
 \end{center}
\caption{}\label{fig16}\end{figure}

\begin{lemma} \label{lemma3}
 If a tableau $T$ has the $k$-NE property, its shape is a broken ribbon. Furthermore, the $k$-NE property is preserved in a jeu de taquin slide.
\end{lemma}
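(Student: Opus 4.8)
The first assertion is essentially a direct combinatorial consequence of the three conditions. I would argue that if $T$ has the $k$-NE property then $[\lambda/\mu]$ contains no $2\times 2$ block. Suppose it did, say the block with cells $(i,j),(i,j+1),(i+1,j),(i+1,j+1)$. The four entries are four distinct integers; at least two of them are $< k$ or at least two are $> k$ (since at most one entry equals $k$, and among four distinct integers avoiding a fixed value $k$ we cannot have two $\le k$ that are both $<k$ fail... actually among four distinct integers only one can equal $k$, so at least three of them are $\ne k$, hence at least two lie on the same side of $k$). If two of them are $< k$, say $a$ in column $c_a$ and $b$ in column $c_b$ with $a<b$, then NE2 forces $c_a < c_b$; but the four cells of the $2\times 2$ block occupy only two columns, so WLOG one of the two ``$<k$'' entries sits in the left column and one in the right column, and we need to check both candidates in the left column are smaller than both in the right column — but the smaller of the two top/bottom pair in the right column could be bumped. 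The clean way: two entries $a<b$ both $<k$ occupying the $2\times2$ block must, by NE2, have $a$ strictly left of $b$, so $a$ is in column $j$ and $b$ in column $j+1$; but then the \emph{other} cell of column $j$ in the block also carries some entry $e$, and if $e<k$ then $e$ vs $b$ again needs $e$ left of $b$ (fine) but $e$ vs $a$ needs them in different columns, contradiction since both are in column $j$. So at most one of the two cells in column $j$ has entry $<k$, and likewise at most one in column $j+1$; symmetrically at most one cell per row has entry $>k$ by NE3. Combined with the fact that only one cell in the whole tableau has entry $=k$, a $2\times2$ block cannot be filled. I would streamline this casework but the idea is exactly that NE2 forbids two small entries in one column, NE3 forbids two large entries in one row, and NE1 gives uniqueness of $k$, which together make a $2\times 2$ square impossible. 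Hence the shape is a broken ribbon.

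For the second assertion — invariance of the $k$-NE property under a jeu de taquin slide — I would proceed property by property, treating a single backward slide (the forward case is symmetric, or follows by considering the inverse slide). Fix the sliding cell $c=c_0$ and the path $c_0,c_1,\dots,c_m$. The key observation is that a jeu de taquin slide is a bijection on cells that moves each entry of the path exactly one step (left or up, along the path), and leaves every other entry fixed. So I want to track what happens to the positions of $k$, of entries $<k$, and of entries $>k$.

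For NE1: I would show that the NE (northeast-most) cell of the shape either is untouched by the slide, or, if its entry moves, the entry occupying the new NE cell is still $k$. Here is where I would lean on the broken-ribbon structure just proved: the NE cell of a broken ribbon is the top-right cell of the top ribbon, and a slide starting from an empty cell adjacent to the diagram can only change the NE cell in controlled ways; using NE2 (everything to the left of $k$ is ``small'') and NE3, the entry $k$ behaves as a ``pivot'', and one checks that if $k$ is pushed, the cell it vacates becomes non-NE and a neighbor carrying $k$ takes over — actually $k$ is unique, so the correct statement is that the slide path either avoids the NE cell, or the NE cell's entry $k$ slides one step and the cell that becomes the new NE corner is exactly the one $k$ moved into. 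This is the most delicate point. For NE2: take $i<j<k$; before the slide $i$ is strictly left of $j$. A backward slide moves entries left or up; I claim it cannot move $j$ to a column strictly left of $i$'s column, nor move $i$ to a column weakly right of $j$. Since only path entries move, and they move by one column at most, the only danger is if $j$ moves left past $i$, i.e. $j$ is on the path, moves into $i$'s column, and $i$ is not simultaneously moved further left. One rules this out by noting that when the slide bumps $j$ leftward into the column containing $i$, the cell it enters was empty and adjacent, forcing (by the shape being a broken ribbon, hence each column of the strip having height $\le$ something controlled) that $i$ was directly below or that the path continues through $i$ as well. The argument for NE3 with $i,j>k$ and ``above'' is the mirror image. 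I'd organize all of this as: ``a backward jeu de taquin slide is order-preserving on the relevant partial orders'' — precisely, it never changes the relative left-of / above relationships among small entries, or among large entries, in the direction that would break NE2/NE3.

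The main obstacle, as flagged, is NE1 together with the interaction at the $k$-pivot: one has to verify that a slide whose path passes through (or terminates at) the NE corner does not destroy condition NE1, and simultaneously that the displacement of $k$ is compatible with NE2 and NE3 for the entries that get shuffled near $k$. I expect the cleanest route is to first establish (from NE2, NE3, NE1, and the broken-ribbon shape) a \emph{normal form} for $T$ — each ribbon component, read along its cells, has its entries $<k$ forming the ``horizontal part'' read left to right in increasing order and its entries $>k$ forming the ``vertical part'' read top to bottom in increasing order, with $k$ at the hinge of the top component — and then observe that this normal form is manifestly preserved by a single jeu de taquin slide because the slide merely transports the diagram rigidly by one cell along an outer corner path, permuting which cell is the hinge but not the linear orders. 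With the normal form in hand, NE1–NE3 after the slide are immediate. I would present the normal-form lemma explicitly as the engine of the proof, prove it in a short paragraph from NE1–NE3, and then dispatch slide-invariance in a couple of lines.
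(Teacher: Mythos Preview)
Your argument for the first assertion is correct in spirit but overcomplicated. The paper dispatches it in three lines by looking at the single entry $c$ in the lower-left cell of a hypothetical $2\times 2$ block: if $c<k$ then $a<c<k$ puts two small entries in one column (violating NE2); if $c>k$ then $d>c>k$ puts two large entries in one row (violating NE3); and $c=k$ is impossible since $c$ is not the northeast cell. Your counting of ``at most one small per column, at most one large per row, at most one $k$'' does not by itself yield a contradiction (those bounds sum to five), and you would still need the extra observation that the two small entries are forced into the top row, leaving no room for $k$ below.

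For the second assertion there is a genuine gap. Your plan is to track individual entries and argue directly that NE2/NE3 survive because entries move at most one column/row; you flag NE1 as the hard case and then propose a ``normal form'' that a slide ``transports rigidly.'' Neither step is carried out, and the rigid-transport claim is false in general for broken ribbons (a slide can merge two components or split one). The danger case you identify for NE2 --- $j<k$ sliding left into $i$'s column while $i$ stays put --- really can be set up in a broken ribbon, and ruling it out needs more than ``the path continues through $i$.''

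The paper's proof works by a completely different, structural route: it shows that in a broken-ribbon shape the slide path $c_0,c_1,\ldots,c_m$ is \emph{almost straight}. Any turn in the path uses three cells forming an L, and if all three were in $\lambda/\mu$ the fourth corner would also have to be (forcing a $2\times 2$ block); hence a turn can occur only at $c_0,c_1,c_2$, and a short argument using NE2/NE3 rules out the ``right then down'' orientation. So the path is either all in one row, all in one column, or the single L-shape $c_1$ below $c_0$, $c_2$ right of $c_1$ with $m=2$. With only these three cases, preservation of NE1--NE3 is a direct check (the one subtle point being that in the horizontal case, a cell below $c_0$ with small entry would have been slid up instead). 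This path-structure lemma is the missing engine; once you have it, the entry-tracking you propose becomes a finite and short case analysis rather than an open-ended one.
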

\begin{proof}
 For the first statement, assume that there is a $2 \times 2$ square in $\lambda/\mu$, assume it has numbers $a,b$ in the upper row and $c,d$ in the lower row. If $c < k$, then $a < c$ implies $a < k$, and this is a contradiction with property NE2. If $c > k$, then $d > c$ implies $d > k$, and this is contradiction with property NE3. But we cannot have $c = k$ since $c$ is obviously not the northeast cell of $T$.\\
 Now take the path $c_0, c_1,\ldots,c_m$ of a backward slide in $T$. We claim that either all $c_i$ are in the same row, or all $c_i$ are in the same column, or $m = 2$, $c_1$ is below $c_0$, and $c_2$ is to the right of $c_1$.\\
 If they are not in the same row or column, it means that the $c_i$'s take a turn. All three squares involved in the term (i.e.\ $c_i,c_{i+1},c_{i+2}$, where either $c_{i+1}$ lies to the right of $c_i$ and $c_{i+2}$ lies below $c_{i+1}$, or $c_{i+1}$ lies below $c_i$ and $c_{i+2}$ lies to the right of $c_{i+1}$) cannot be in $\lambda/\mu$, since that would imply that there is a $2 \times 2$ block in $\lambda/\mu$. Therefore the only option is if the entries involved in the turn are $c_0,c_1,c_2$. Say that $c_1$ lies to the right of $c_0$ and $c_2$ lies below $c_1$. The fact that we can add $c_0$ to $\lambda/\mu$ implies that there is a cell $c'$ of $\lambda/\mu$ below $c_0$, with entry, say, $a_3$. Say that we have $a_1$ in $c_1$ and $a_2$ in $c_2$. We have $a_3 < k$ (since $a_3 > k$ would imply $a_2 > a_3 > k$, and this would contradict NE3). We also have $a_1 < a_3$, since otherwise we would be sliding from $c'$ into $c_0$ rather than from $c_1$. But then $a_1 < k$ lies to the right of $a_3 < k$, even though $a_1 < a_3$, which contradicts NE2.\\
 That means that $c_1$ lies below $c_0$ and $c_2$ lies to the right of $c_1$. If $m \geq 3$, there must be cells both above $c_2$ and $c_3$ in $\lambda/\mu$, and this would give a $2 \times 2$ block.\\
 Assume first that all $c_i$ are in the same row. Obviously the northest cell is preserved, so property NE1 holds for the new tableau. Futhermore, since all cells of the tableau stay in the same row, NE3 is preserved. If NE2 is violated in the new tableau, it must mean that there is a cell $c'$ with entry $< k$ in $T$ in the same column as $c_0$. But this can only happen if this cell lies immediately below $c_0$; by NE2, its entry is less than the entry of $c_1$, and therefore we would slide from $c'$ into $c_0$, not from $c_1$.\\
 If all $c_i$ are in the same column, the proof that the properties NE1, NE2 and NE3 are preserved is completely analogous. So let us assume that we have $m = 2$, $c_1$ is below $c_0$, and $c_2$ is to the right of $c_1$. There must be a cell $c'$ of $\lambda/\mu$ to the right of $c_0$, say with entry $a_3$. Assume we have $a_1$ in $c_1$ and $a_2$ in $c_2$. Then $a_1 < k$ ($a_1 > k$ would imply $a_2 > k$ and contradict NE3) and $a_2 > k$ ($a_2 < k$ would imply $a_3 < k$ and contradict NE2). So all cells with entries $< k$ stay in the same column, and all cells with entries $>k$ stay in the same row. Therefore NE2 and NE3 are still satisfied, and it is clear that the northeast cell stays in place.\\
 The proof for a forward slide is analogous. This completes the proof of the lemma.
 \end{proof}

\begin{lemma} \label{lemma2}
 Take $r,k$, $1 \leq k \leq r$, and let $S$ be the standard Young tableau of shape $(k,1^{r-k})$ with $1,2,\ldots,k$ in the first row, and $k+1,k+2,\ldots,r$ in rows $2,3,\ldots,r-k+1$. Choose a skew shape $\lambda/\mu$. Then the number of standard Young tableaux of shape $\lambda/\mu$ that rectify to $S$ is $\binom{\rib(\lambda/\mu) - 1} {k - 1 - \wt(\lambda/\mu)}$  if $\lambda/\mu$ is a broken ribbon of size $r$, and $0$ otherwise.
\end{lemma}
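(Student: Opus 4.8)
I would prove Lemma \ref{lemma2} by exploiting the description of standard Young tableaux that rectify to a fixed target $S$ in terms of the $k$-NE property established in Lemma \ref{lemma3}. The key observation is that $S$, the standard tableau of shape $(k,1^{r-k})$ with $1,\ldots,k$ across the first row and $k+1,\ldots,r$ down the first column, has the $k$-NE property: its northeast cell is $k$ (NE1), any $i<j<k$ satisfy "$i$ strictly left of $j$" since they sit in increasing order in the first row (NE2), and any $i<j$ with $i,j>k$ sit in increasing order down the first column, so $i$ is strictly above $j$ (NE3). Since rectification is a composition of forward jeu de taquin slides, and since by Lemma \ref{lemma3} the $k$-NE property is preserved under (forward) slides and is visibly satisfied only by $S$ among straight-shape standard tableaux, I claim that a standard tableau $T$ of shape $\lambda/\mu$ rectifies to $S$ \emph{if and only if} $T$ itself has the $k$-NE property. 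The "only if" direction follows by running backward slides from $S$ and applying Lemma \ref{lemma3} at each step; the "if" direction follows because if $T$ has the $k$-NE property then so does its rectification, which being of straight shape must be $S$ (a short direct check: a straight-shape tableau with the $k$-NE property is forced to be $S$).

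**The counting step.** Granting the reduction above, it remains to count standard Young tableaux $T$ of shape $\lambda/\mu$ with the $k$-NE property. By Lemma \ref{lemma3}, such $T$ can only exist when $\lambda/\mu$ is a broken ribbon, and clearly $|\lambda/\mu| = r$; this gives the "$0$ otherwise" clause. So suppose $\lambda/\mu$ is a broken ribbon of size $r$ with $\rib(\lambda/\mu)$ components. I would argue that specifying such a $T$ amounts to the following combinatorial choice: the entries $1,\ldots,k$ occupy cells forming a sub-configuration in which each is strictly left of the next (by NE2), hence the cells carrying $1,\ldots,k$ form a horizontal strip within the broken ribbon; dually the cells carrying $k,k+1,\ldots,r$ form a vertical strip (by NE3); these two strips share exactly the cell carrying $k$, which must be the northeast cell of the diagram. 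Within each ribbon component, the entries are forced once we decide, at the single "corner" of each component (the top-right cell of each ribbon, of which there are $\rib(\lambda/\mu)$ in total), whether that corner belongs to the horizontal part or the vertical part — exactly as in the proof of the earlier Lemma on decomposing broken ribbons into a horizontal and a vertical strip. The horizontal strip must contain precisely $k$ cells. The minimum possible number of cells in the horizontal part is $\wt(\lambda/\mu)$ (one column-step per unit of width, plus nothing free) wait — more precisely, each ribbon forces a certain number of cells into the horizontal direction, and the total of these forced horizontal cells, together with the freedom at the $\rib(\lambda/\mu)$ corners, must sum to $k$; a clean bookkeeping using $\wt + \hgt + \rib = |\lambda/\mu| = r$ from \eqref{sum} shows that we need to choose a $(k - 1 - \wt(\lambda/\mu))$-subset of the $\rib(\lambda/\mu)-1$ "free" corners (the northeast corner being already pinned down by NE1), yielding the binomial coefficient $\binom{\rib(\lambda/\mu)-1}{k-1-\wt(\lambda/\mu)}$.

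**Main obstacle.** The routine-looking but genuinely delicate part is the bijection in the counting step: making precise how a broken ribbon with a distinguished NE cell decomposes into a horizontal part of size $k$ and a vertical part of size $r-k+1$ overlapping in one cell, and verifying that the entry-filling $1,2,\ldots,r$ is then uniquely determined and valid (weakly increasing rows/columns, strictly in the standard sense), i.e.\ that NE2 and NE3 together with the broken-ribbon structure leave exactly the claimed number of tableaux. I would handle this by processing the ribbon components from northeast to southwest: once the corner choice is made in each component, the relative order of all entries is forced by NE2 (for the part below the global "$k$-level") and NE3 (for the part above it), and one checks the ranges match up using the identity $\wt(\lambda/\mu) + \hgt(\lambda/\mu) + \rib(\lambda/\mu) = r$. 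I would also double-check the edge cases where the binomial coefficient vanishes (e.g.\ $k-1 < \wt(\lambda/\mu)$, meaning the horizontal strip is too small to traverse the required width) against the direct combinatorial impossibility, to confirm the formula including its support.
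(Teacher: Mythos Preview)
Your proposal is correct and follows essentially the same approach as the paper: reduce via Lemma~\ref{lemma3} to counting standard tableaux of shape $\lambda/\mu$ with the $k$-NE property (using that $S$ is the unique straight-shape tableau with this property), and then count by observing that each cell with a right neighbor is forced to be $<k$, each cell with an upper neighbor is forced to be $>k$, so the only freedom is at the $\rib(\lambda/\mu)-1$ non-northeast ribbon corners, of which exactly $k-1-\wt(\lambda/\mu)$ must be chosen to carry entries $<k$. Your framing of the count as a horizontal/vertical strip decomposition sharing the northeast cell is just a repackaging of the paper's direct argument; the content is identical (and your minor slip calling rectification ``forward'' slides is harmless since Lemma~\ref{lemma3} covers both directions).
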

\begin{proof}
 Obviously the number is $0$ unless $|\lambda/\mu| = r$.\\
 Note that $S$ has the $k$-NE property. By Lemma \ref{lemma3}, that means that if $T$ of shape $\lambda/\mu$ rectifies to $S$, $T$ has the $k$-NE property and its shape $\lambda/\mu$ is a broken ribbon. Furthermore, there is only one non-skew standard Young tableau that has the $k$-NE property, and that is $S$.\\
 It remains to assume that $\lambda/\mu$ is a broken ribbon of size $r$, and to count the number of standard Young tableaux of shape $\lambda/\mu$ that have the $k$-NE property. Place $k$ in the northeast cell. If a cell in $\lambda/\mu$ has a right neighbor in $\lambda/\mu$, then the entry has to be less than $k$ (otherwise both this entry and the entry to the right would be greater than $k$, and this would contradict NE3). Similarly, if a cell in $\lambda/\mu$ has an upper neighbor in $\lambda/\mu$, then the entry has to be greater than $k$ (otherwise both this entry and the entry above it would be less than $k$, and this would contradict NE2).\\
 This means that there are at least $\wt(\lambda/\mu)$ elements that are $< k$. We can choose the northeast element of any ribbon except the northeast ribbon and make it $< k$. Since there are $k-1$ elements total that are less than $k$, we have
 $$\binom{\rib(\lambda/\mu) - 1} {k - 1 - \wt(\lambda/\mu)}$$
 choices.
\end{proof}

Finally, recall the following result from \cite{lls}. For standard Young tableaux $T$ and $S$, we let $T * S$ be the tableau we get by placing $T$ below and to the left of $S$. See Figure \ref{fig14} for an example.

\begin{figure}[ht!]
 \begin{center}
  \includegraphics[height=3.5cm]{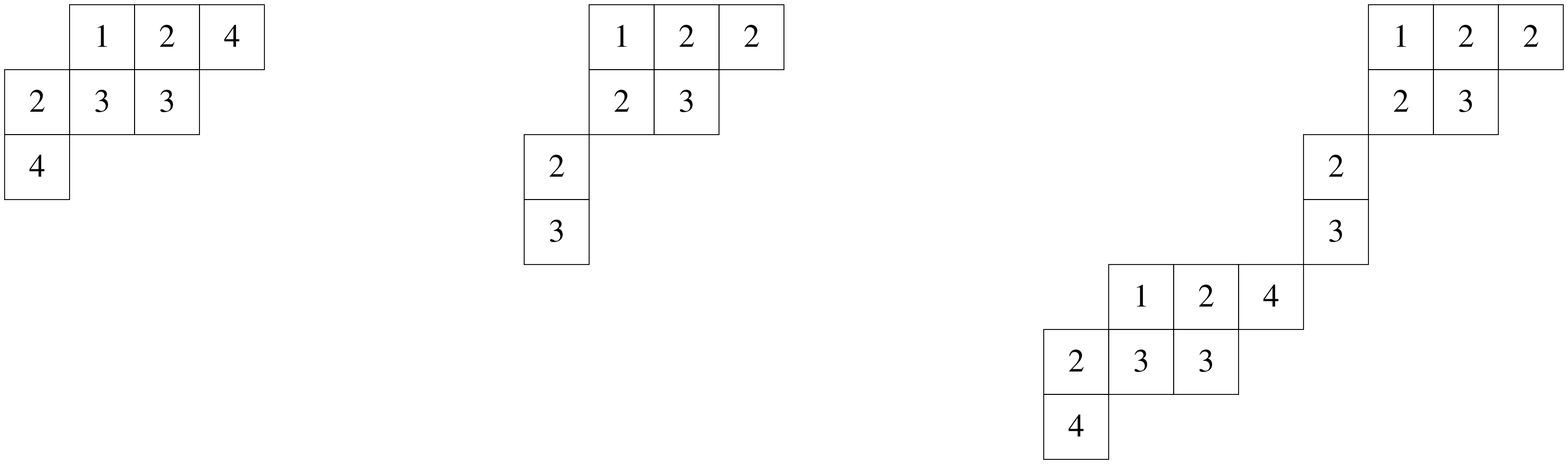}
 \end{center}
\caption{Tableaux $T$, $S$ and $T*S$.}\label{fig14}\end{figure}

\begin{thm}[Skew Littlewood-Richardson rule -- SLRR]
 Let $\lambda$, $\mu$, $\sigma$, $\tau$ be partitions and fix a tableau
 $T$ of shape $\sigma$. Then
 $$s_{\lambda/\mu} s_{\sigma/\tau} = \sum (-1)^{|R^-|} s_{\lambda^+/\mu^-},$$
 where the sum is over triples $(R^-, R^+, R)$ of standrd Young tableaux of respective shapes $(\mu/\mu^-)^c$, $\lambda^+/\lambda$ and $\tau$ such that $R^- * R^+ * R$ rectifies to $T$.
\end{thm}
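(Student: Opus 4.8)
The plan is to follow the self-dual Hopf-algebra framework of Lam--Lauve--Sottile \cite{lls}, since (as the paper notes) no purely combinatorial proof of this statement is known. I would first record the structure of $\Lambda$, the ring of symmetric functions, as a graded, commutative and cocommutative Hopf algebra that is self-dual under the Hall inner product $\langle\,,\rangle$. Write the coproduct as $\Delta s_\nu = \sum_{\alpha,\beta} c^\nu_{\alpha\beta}\, s_\alpha \otimes s_\beta$, so that $\langle fg,h\rangle = \langle f\otimes g,\Delta h\rangle$, and recall that the antipode acts by $S(s_\mu)=(-1)^{|\mu|} s_{\mu^c}$ (see \cite[\S 7]{stanley}). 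For $f\in\Lambda$ let $f^\perp$ denote the adjoint of multiplication $m_f$; then $s_\mu^\perp s_\lambda = s_{\lambda/\mu}$, and a one-line adjointness computation gives the distributive ``coproduct rule''
$$f^\perp(gh)=\sum (f_{(1)}^\perp g)(f_{(2)}^\perp h), \qquad \Delta f = \sum f_{(1)}\otimes f_{(2)}.$$

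The heart of the argument is a single straightening identity in the Heisenberg double of $\Lambda$ that moves a multiplication operator past a skewing operator at the cost of an antipode. Starting from $s_{\sigma/\tau}=s_\tau^\perp s_\sigma$, I would rewrite $s_{\lambda/\mu}\cdot s_{\sigma/\tau}=m_{s_{\lambda/\mu}} s_\tau^\perp s_\sigma$ and, using the antipode axiom $\sum f_{(1)} S(f_{(2)})=\epsilon(f)\,1$ together with coassociativity, express $m_a\circ b^\perp$ as a sum of operators of the form $\sum (\cdots)^\perp\circ m_{(\cdots)}$ in which the antipode $S$ is applied to the part of the coproduct coming from $\mu$. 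Because $S(s_\mu)=(-1)^{|\mu|}s_{\mu^c}$, this is precisely the mechanism that turns ``removing a strip from $\mu$'' into the \emph{conjugate} shape $(\mu/\mu^-)^c$ and simultaneously produces the sign $(-1)^{|\mu/\mu^-|}=(-1)^{|R^-|}$. After straightening, the product collapses to $s_\nu^\perp$ applied to a single product of Schur functions, where $\nu$ ranges over partitions built from the coproducts of $s_\mu$ and $s_\sigma$.

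With the product reduced to a single skew of a product, I would expand everything in the Schur basis and match coefficients. The coefficient of $s_\rho$ on the left becomes a sum of products of three Littlewood--Richardson coefficients (one from the coproduct splitting of $\nu$, one from $c^\lambda_{\mu\,\cdot}$, and one from $c^\sigma_{\tau\,\cdot}$), weighted by the antipode sign. To identify this with the right-hand side I would invoke the jeu de taquin characterisation of Littlewood--Richardson coefficients \cite[Appendix A1]{stanley}: the number of standard Young tableaux of a fixed skew shape that rectify under $\jdt$ to a fixed straight-shape tableau $U$ equals the corresponding structure constant, independently of $U$. The $*$-concatenation $R^-*R^+*R$ is arranged so that rectifying it to the single fixed tableau $T$ of shape $\sigma$ simultaneously encodes all three rectifications; translating the triple product of LR coefficients through this characterisation yields exactly the signed count $\sum (-1)^{|R^-|}$ over triples $(R^-,R^+,R)$ of shapes $(\mu/\mu^-)^c$, $\lambda^+/\lambda$, $\tau$ in the statement.

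The main obstacle is twofold. First, establishing the straightening identity with the antipode in the correct position and with the correct sign is delicate Sweedler-notation bookkeeping, and it is here that commutativity, cocommutativity, and self-duality of $\Lambda$ are genuinely used. Second, and more subtle, is verifying that the \emph{conjugation} on $R^-$ is exactly right: the antipode contributes $s_{\mu^c}$-type terms, and one must check that transposing shapes interacts correctly with rectification so that the conjugate shape $(\mu/\mu^-)^c$ (rather than $\mu/\mu^-$ itself) appears, with no residual sign discrepancy. Once these two points are pinned down, the coefficient-by-coefficient match in the Schur basis is forced, completing the proof.
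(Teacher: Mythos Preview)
The paper does not prove SLRR at all: it is quoted verbatim as a known result of Lam--Lauve--Sottile \cite{lls} (introduced with ``Finally, recall the following result from \cite{lls}''), and is used as a black box in Section~\ref{another}. So there is no ``paper's own proof'' to compare against.

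Your sketch is a faithful outline of the argument actually given in \cite{lls}: the Hopf-algebra structure of $\Lambda$, the antipode $S(s_\mu)=(-1)^{|\mu|}s_{\mu^c}$, the commutation/straightening of multiplication past skewing via Sweedler-notation manipulations, and the final translation of a product of Littlewood--Richardson coefficients into the rectification count via jeu de taquin. The two points you flag as obstacles (getting the Sweedler bookkeeping right so the sign and antipode land correctly, and checking that conjugation of $\mu/\mu^-$ interacts with rectification as claimed) are exactly where the content of \cite{lls} lies, and you have not actually carried them out here; as written this is a plan, not a proof. If you intend to include a self-contained argument you would need to state and verify the key identity $m_f\circ g^\perp = \sum (S(g_{(1)})f)^\perp \circ m_{g_{(2)}f_{(2)}}$ (or whichever variant you choose) explicitly, rather than describe it. But since the paper itself treats SLRR as an imported theorem, simply citing \cite{lls} would be consistent with the paper's approach.
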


The lemmas indeed prove SQMNR as follows. By Lemma \ref{lemma1},
$$s_{\lambda/\mu} \cdot \wp_r = \sum_{k = 1}^{r} (-q)^{r-k} s_{\lambda/\mu} \cdot s_{k,1^{r-k}}.$$
By SLRR,
$$s_{\lambda/\mu} \cdot s_{k,1^{r-k}} = \sum_{R^-,R^+} (-1)^{|R^-|} s_{\lambda^+/\mu^-},$$
where the sum is over $R^- \in \syt((\mu/\mu^-)^c)$, $R^+ \in \syt(\lambda^+/\lambda)$ such that $R^- * R^+$ rectifies to $T$, where $T$ is the standard Young tableau of shape $(k,1^{r-k})$ with $1,2,\ldots,k$ in the first row, and $k+1,k+2,\ldots,r$ in rows $2,3,\ldots,r-k+1$. By Lemma \ref{lemma2}, the sum on the right is over $\lambda^+,\mu^-$ such that $\lambda^+/\lambda$ and $\mu/\mu^-$ are broken ribbons, and for such $\lambda^+,\mu^-$, the coeffiecient of $s_{\lambda^+/\mu^-}$ is
$$(-1)^{|\mu/\mu^-|} \binom{\rib(\lambda^+/\lambda) + \rib(\mu/\mu^-) - 1} {k - 1 - \wt(\lambda^+/\lambda) - \hgt(\mu/\mu^-)}.$$
This means that the coefficient of $s_{\lambda^+/\mu^-}$ in $s_{\lambda/\mu} \cdot \wp_r$ is
$$(-1)^{|\mu/\mu^-|} \sum_k (-q)^{r-k}  \binom{\rib(\lambda^+/\lambda) + \rib(\mu/\mu^-) - 1} {k - 1 - \wt(\lambda^+/\lambda) - \hgt(\mu/\mu^-)}.$$
Since $r = \rib(\lambda^+/\lambda) + \rib(\mu/\mu^-) + \wt(\lambda^+/\lambda) + \wt(\mu/\mu^-) + \hgt(\lambda^+/\lambda) + \hgt(\mu/\mu^-)$, the sum equals
$$(-q)^{\hgt(\lambda^+/\lambda) + \wt(\mu/\mu^-)} \sum_k (-q)^{\rib(\lambda^+/\lambda) + \rib(\mu/\mu^-) - 1 - (k - 1 - \wt(\lambda^+/\lambda) - \hgt(\mu/\mu^-))} \textstyle \binom{\rib(\lambda^+/\lambda) + \rib(\mu/\mu^-) - 1} {k - 1 - \wt(\lambda^+/\lambda) - \hgt(\mu/\mu^-)}$$
$$ = (-q)^{\hgt(\lambda^+/\lambda) + \wt(\mu/\mu^-)} (1-q)^{\rib(\lambda^+/\lambda) + \rib(\mu/\mu^-) - 1}$$
by the binomial theorem. This is SQMNR'.

\section{Some conjectures involving Hall-Littlewood polynomials} \label{hlp}

The quantum power sum functions $\wp_r$ are equal to Hall-Littlewood polynomials $P_r$ (with parameter $q$ instead of the usual $t$), see e.g.\ \cite[page 214]{mcd}. So while SPR gives the expansion of $s_{\lambda/\mu} s_r$, SQMNR  gives the expansion of $s_{\lambda/\mu} P_r$. Of course, the expansion of $P_\lambda P_r$ and $P_\lambda P_{1^r} = P_\lambda e_r$ in terms of $P_{\lambda^+}$ are two of the basic results for Hall-Littlewood polynomials (see \cite[\S III, (3.2) and (3.10)]{mcd}). The following questions naturally arise. Can we exchange the roles of $P$ and $s$ in SQMNR, i.e.\ is there a natural expansion of $P_{\lambda/\mu} s_r$ in terms of $P_{\lambda^+/\mu^-}$? What about $P_{\lambda/\mu} e_r$? And can we find a skew version of the Pieri rule for Hall-Littlewood polynomials, an expansion of $P_{\lambda/\mu} P_r$? The following conjectures suggest that the answers to all these questions are in the affirmative.

\medskip

Recall the definition of the $q$-binomial coefficient,
$$\begin{bmatrix} n \\ k \end{bmatrix}_q = \frac{[n]!}{[k]![n-k]!}, \mbox{ where } [i]! = 1(1+q)(1+q+q^2) \cdots (1+q+q^2+\ldots+q^{i-1}).$$

\medskip

For a horizontal strip $\lambda/\mu$, define 
$$\hs(\lambda/\mu) = \prod_{\begin{array}{c} {\scriptstyle \lambda^c_i = \mu^c_i+1} \\ {\scriptstyle \lambda^c_{i+1} = \mu^c_{i+1}} \end{array}} (1-q^m_i(\lambda)).$$
For a vertical strip $\lambda/\mu$, define
$$\vs(\lambda/\mu) = \prod_{i \geq 1} \begin{bmatrix} \lambda^c_i - \lambda^c_{i+1} \\ \lambda^c_i - \mu^c_i \end{bmatrix}_q.$$
For a broken ribbon $\lambda/\mu$, define 
$$\br(\lambda/\mu) = (-q)^{\hgt(\lambda/\mu)} (1-q)^{\rib(\lambda^+/\lambda)}.$$
For any skew shape $\lambda/\mu$, define
$$\sk(\lambda/\mu) = q^{\sum_i \binom{(\lambda^+)^c_i - \lambda^c_i}{2}} \prod_i \begin{bmatrix}  (\lambda^+)^c_i - \lambda^c_{i+1} \\ m_i(\lambda) \end{bmatrix}_q.$$

With this notation, SQMNR' can be expressed as
$$s_{\lambda/\mu} \cdot P_r = \frac 1{1-q} \sum_{\lambda^+,\mu^-} (-1)^{|\mu/\mu^-|} \br(\lambda^+/\lambda) \br((\mu/\mu)^c) s_{\lambda^+/\mu^-},$$
where the sum on the right is over $\lambda^+,\mu^-$ such that $\lambda^+/\lambda$ and $\mu/\mu^-$ are broken ribbons with $|\lambda^+/\lambda| + |\mu/\mu^-| = r$.

\begin{conj}
 For partitions $\lambda,\mu$, $\mu \subseteq \lambda$, and $r \geq 0$ we have
 $$P_{\lambda/\mu} \cdot s_r = \sum (-1)^{|\mu/\mu^-|} \sk(\lambda^+/\lambda) P_{\lambda^+/\mu^-},$$
 where the sum on the right is over all $\lambda^+ \supseteq \lambda$, $\mu^- \subseteq \mu$ such that $\mu/\mu^-$ is a vertical strip and $|\lambda^+/\lambda|+|\mu/\mu^-| = r$.
\end{conj}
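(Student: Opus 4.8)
\emph{Overall strategy.}
The plan is to run the algebraic argument of Section~\ref{another} over the $q$-deformed Hall inner product $\langle\,\cdot\,,\cdot\,\rangle_{q}$ in place of the ordinary one, using that $\langle P_{\lambda},Q_{\mu}\rangle_{q}=\delta_{\lambda\mu}$, that $\Lambda$ is self-dual as a Hopf algebra under this pairing, and that the Cauchy kernel $\sum_{\lambda}P_{\lambda}(x)Q_{\lambda}(y)=\prod_{i,j}\tfrac{1-qx_{i}y_{j}}{1-x_{i}y_{j}}$ now plays the role of $\sum_{\lambda}s_{\lambda}\otimes s_{\lambda}$ (see Macdonald \cite[\S III.4]{mcd}). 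First I would settle the base case $\mu=\emptyset$, the non-skew Pieri rule $P_{\lambda}\cdot s_{r}=\sum_{\lambda^{+}\supseteq\lambda,\ |\lambda^{+}/\lambda|=r}\sk(\lambda^{+}/\lambda)\,P_{\lambda^{+}}$. Since $s_{r}=h_{r}=\sum_{\sigma\vdash r}q^{n(\sigma)}P_{\sigma}$, where $n(\sigma)=\sum_{i}\binom{\sigma^{c}_{i}}{2}$ and $q^{n(\sigma)}=K_{(r)\sigma}(q)$ is the (simplest) Kostka--Foulkes polynomial, the base case reduces to the Hall-polynomial identity $\sum_{\sigma}q^{n(\sigma)}f^{\lambda^{+}}_{\lambda\sigma}(q)=\sk(\lambda^{+}/\lambda)$, where $P_{\lambda}P_{\sigma}=\sum f^{\lambda^{+}}_{\lambda\sigma}(q)P_{\lambda^{+}}$; note $f^{\lambda^{+}}_{\lambda\sigma}(q)\neq 0$ forces $\lambda\subseteq\lambda^{+}$, which is why the support is as claimed. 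This identity should follow from known recursions for the $f^{\lambda^{+}}_{\lambda\sigma}$ together with a $q$-Chu--Vandermonde summation, exactly as the hook count in Lemma~\ref{lemma1} collapses the $m$-expansion of $\wp_r$.

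\emph{Lifting to skew shapes.}
With the base case in hand I would deduce the general statement exactly as the skew Littlewood--Richardson rule is deduced from the ordinary Littlewood--Richardson rule in \cite{lls}. Write $P_{\lambda/\mu}=Q_{\mu}^{\perp}P_{\lambda}$, with $g^{\perp}$ the operator adjoint to multiplication by $g$ for $\langle\,\cdot\,,\cdot\,\rangle_{q}$; these skewing operators satisfy the Leibniz rule $g^{\perp}(fh)=\sum(g^{\perp}_{(1)}f)(g^{\perp}_{(2)}h)$ governed by the cocommutative coproduct $\Delta Q_{\mu}=\sum_{\nu}Q_{\mu/\nu}\otimes Q_{\nu}$. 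Feeding the base-case Pieri rule and the $q$-Cauchy identity into the Lam--Lauve--Sottile computation, with $s\rightsquigarrow P$, $\langle\,\cdot\,,\cdot\,\rangle\rightsquigarrow\langle\,\cdot\,,\cdot\,\rangle_{q}$, and the antipode of $\Lambda$ (which, unlike $S(s_{\lambda})=(-1)^{|\lambda|}s_{\lambda^{c}}$, does \emph{not} fix the $P$-basis) in place of its Schur analogue, should produce an identity $P_{\lambda/\mu}\cdot s_{r}=\sum_{\lambda^{+},\mu^{-}}(-1)^{|\mu/\mu^{-}|}\,c(\lambda^{+},\lambda,\mu,\mu^{-})\,P_{\lambda^{+}/\mu^{-}}$, the sign $(-1)^{|\mu/\mu^{-}|}$ being the shadow of the antipode on the $\mu$-side, just as $(-1)^{|R^{-}|}$ is in the skew Littlewood--Richardson rule. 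An alternative, more elementary route would be to instead expand $P_{\lambda/\mu}$ itself in the Schur basis via inverse skew Kostka--Foulkes polynomials, apply the Schur skew Littlewood--Richardson rule (as quoted in Section~\ref{another}) and Lemma~\ref{lemma1}, and re-collect into the $P_{\lambda^{+}/\mu^{-}}$, at the cost of a heavier change-of-basis bookkeeping.

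\emph{Identifying the coefficients; main obstacle.}
It remains to show $c(\lambda^{+},\lambda,\mu,\mu^{-})=0$ unless $\mu/\mu^{-}$ is a vertical strip, and $c(\lambda^{+},\lambda,\mu,\mu^{-})=\sk(\lambda^{+}/\lambda)$ in that case; this is the analogue of Lemma~\ref{lemma2}, and the part I expect to be hardest. In the Schur setting the collapse was controlled by the $k$-NE property of Lemma~\ref{lemma3} and a single $q$-binomial summation; here the relevant structure constants—Hall polynomials, Kostka--Foulkes polynomials, and the antipode's action on $P_{\mu}$—admit no product formula, so the telescoping must be forced indirectly. Concretely, I would first show that the non-vertical-strip contributions on the $\mu$-side cancel in pairs via a sign-reversing involution constructed on the algebraic side (a purely combinatorial involution in the spirit of Section~\ref{amcn} is tempting, but, as remarked in Section~\ref{hlp}, the Hall--Littlewood weights $\varphi$ and $\psi$ do not factor through a single local move, which is precisely why the naive combinatorial attack fails), and then, once only vertical strips survive on the $\mu$-side, reduce the residual identity $\sum_{\sigma}q^{n(\sigma)}(\cdots)=\sk(\lambda^{+}/\lambda)$ to the $q$-binomial theorem, just as in the closing paragraph of Section~\ref{another}. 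A secondary nuisance is that the $\mu$-side of any Littlewood--Richardson-type identity naturally carries a conjugation $(\mu/\mu^{-})^{c}$, so one must transpose the vertical-strip condition and check that no extra weight is picked up there; verifying this compatibility—which is where $s_{r}=h_{r}$, rather than $q_{r}$ or $e_{r}$, is the delicate instance—I would do last.
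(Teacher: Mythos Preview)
The statement you are attempting to prove is labeled a \emph{Conjecture} in the paper; the paper provides no proof. Indeed, immediately after stating the three conjectures in Section~\ref{hlp}, the author writes explicitly that ``the methods of this paper do \emph{not} seem to work for these three conjectures,'' and that ``positive powers of $q$ cancel in some other, mysterious manner.'' So there is no paper proof to compare against.

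As for your proposal itself: it is a reasonable outline of a strategy, but not a proof. The base case $\mu=\emptyset$ is indeed known (the paper cites it as identity~(2) on page~219 of \cite{mcd}), so that step is fine. The Hopf-algebraic lifting via the Lam--Lauve--Sottile machinery with the $q$-deformed pairing is a natural thing to try, and you correctly flag the complication that the antipode does not act simply on the $P$-basis. But the entire content of the conjecture lies in your third step, ``identifying the coefficients,'' and here you have only a wish list: you say you would ``show that the non-vertical-strip contributions on the $\mu$-side cancel in pairs via a sign-reversing involution constructed on the algebraic side,'' without indicating what that involution is or why it should exist. You yourself note that the Hall--Littlewood structure constants admit no product formula and that the combinatorial involutions of Sections~\ref{amcn}--\ref{inv} fail here; that is exactly the obstruction the paper records. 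Until that cancellation is actually carried out, the argument has a genuine gap at its core, and the conjecture remains open.
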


For $\lambda = \mu = \emptyset$, this is identity (2) on page 219 in \cite{mcd}.

\begin{conj}
 For partitions $\lambda,\mu$, $\mu \subseteq \lambda$, and $r \geq 0$ we have
 $$P_{\lambda/\mu} \cdot e_r = P_{\lambda/\mu} \cdot P_{1^r} = \sum (-1)^{|\mu/\mu^-|} \vs(\lambda^+/\lambda) P_{\lambda^+/\mu^-},$$
 where the sum on the right is over all $\lambda^+ \supseteq \lambda$, $\mu^- \subseteq \mu$ such that $\lambda^+/\lambda$ and $(\mu/\mu^-)^c$ are vertical strips and $|\lambda^+/\lambda|+|\mu/\mu^-| = r$.
\end{conj}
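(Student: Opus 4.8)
Since this is a conjecture, what follows is a plan rather than a proof; I would not expect the combinatorial involutions of Sections~\ref{amcn}--\ref{inv} to carry over, for the reason explained at the end, and so I would argue algebraically in the ring $\Lambda$ of symmetric functions with the Hall--Littlewood inner product $\langle\,\cdot\,,\cdot\,\rangle_q$ (for which $\langle P_\lambda,Q_\mu\rangle_q=\delta_{\lambda\mu}$, where $Q_\lambda=b_\lambda(q)P_\lambda$ and $b_\lambda(q)=\prod_{i\ge1}\prod_{j=1}^{m_i(\lambda)}(1-q^j)$). The two structural facts I would use are: (i) the coproduct of $\Lambda$ is the usual one, coassociative, cocommutative, and basis-independent, and it is adjoint to multiplication under $\langle\,\cdot\,,\cdot\,\rangle_q$ (because $p_r$ is primitive); and (ii) the skew functions $P_{\lambda/\mu}$ are, up to explicit scalars $b_\bullet(q)$, the images of $P_\lambda$ under the skewing operators adjoint to multiplication, so that $\langle P_{\lambda/\mu},Q_\nu\rangle_q$ is, up to such scalars, the coefficient of $P_\lambda$ in $P_\mu\cdot P_\nu$ (see \cite[\S III]{mcd}). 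Before anything else I would record two sanity checks. For $\mu=\emptyset$ the asserted identity is exactly the classical Hall--Littlewood Pieri rule $P_\lambda\cdot e_r=P_\lambda\cdot P_{1^r}=\sum_{\lambda^+}\vs(\lambda^+/\lambda)P_{\lambda^+}$ over vertical $r$-strips \cite[\S III, (3.10)]{mcd} --- this already forces one to check that the coefficient $\vs$ defined here agrees with Macdonald's, a finite comparison of the two product formulas. And at $q=0$ one has $P_{\lambda/\mu}(x;0)=s_{\lambda/\mu}$ while $\vs(\lambda^+/\lambda)$ becomes $1$ on vertical strips, so the identity collapses to the conjugate skew Pieri rule CSPR. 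Both checks make me believe the statement is correct.

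\textbf{Reduction to a structure-constant identity.}
To prove the identity it suffices to pair both sides with $Q_\nu$ for every partition $\nu$. On the left, $\langle P_{\lambda/\mu}\cdot e_r,Q_\nu\rangle_q=\langle P_{\lambda/\mu}\otimes e_r,\Delta Q_\nu\rangle_q$; expanding $\Delta Q_\nu=\sum_\sigma Q_{\nu/\sigma}\otimes Q_\sigma$ and using $e_r=P_{1^r}$ (so pairing with $e_r$ picks out columns) gives $\langle P_{\lambda/\mu},Q_{\nu/(1^r)}\rangle_q$, and then expanding $Q_{\nu/(1^r)}=e_r^\perp Q_\nu$ by the classical vertical-strip dual Pieri rule turns the left side into a sum over $\sigma\subseteq\nu$ with $\nu/\sigma$ a vertical $r$-strip of Hall--Littlewood structure constants weighted by the coefficients $\vs(\nu/\sigma)$. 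On the right, $\langle P_{\lambda^+/\mu^-},Q_\nu\rangle_q$ is again a Hall--Littlewood structure constant. After this bookkeeping the conjecture becomes the scalar identity
$$\sum_{\sigma}g^{\lambda}_{\mu\sigma}(q)\,\vs(\nu/\sigma)=\sum_{\lambda^+,\mu^-}(-1)^{|\mu/\mu^-|}\,\vs(\lambda^+/\lambda)\,g^{\lambda^+}_{\mu^-\nu}(q),$$
valid for all $\lambda,\mu,\nu$ and all $r\ge0$, where $g^{\alpha}_{\beta\gamma}(q)$ is the coefficient of $P_\alpha$ in $P_\beta\cdot P_\gamma$, the displayed equation suppresses the rational factors $b_\bullet(q)$ whose precise form has to be tracked and is the one genuinely delicate part of the reduction, the left sum runs over $\sigma$ with $\nu/\sigma$ a vertical $r$-strip, and the right sum runs over $\lambda^+/\lambda$ vertical and $\mu/\mu^-$ horizontal with $|\lambda^+/\lambda|+|\mu/\mu^-|=r$. (The analogous reduction for the other two conjectures would replace $e_r=P_{1^r}$ by $s_r=h_r$, respectively by $P_r$, and the input (3.10) by \cite[\S III, (3.2)]{mcd} or the corresponding identity on page~219 of \cite{mcd}.)

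\textbf{Proving the identity, and the main obstacle.}
For the displayed identity I see two routes. The first mirrors Section~\ref{another}: establish a Hall--Littlewood analogue of the Lam--Lauve--Sotille skew Littlewood--Richardson rule \cite{lls}, i.e.\ an expansion of $P_{\lambda/\mu}\cdot P_{\sigma/\tau}$ in the skew Hall--Littlewood basis $\{P_{\alpha/\beta}\}$, and then specialise $\sigma/\tau=(1^r)$; but no such rule is presently known, so this route essentially presupposes a substantial new theorem. The second is an induction on $|\mu|$: the base case $\mu=\emptyset$ is \cite[\S III, (3.10)]{mcd}, and the inductive step would remove a corner cell of $\mu$ and push the computation through the coproduct $\Delta e_r=\sum_{i+j=r}e_i\otimes e_j$ together with the twisted-Leibniz rule that expresses a skewing operator applied to a product via the coproduct of its label. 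Whichever route one takes, I expect the real difficulty --- and exactly the feature that makes the combinatorial methods of this paper ``seem to fail'' --- to be the alternating sum over the horizontal strips $\mu/\mu^-$ on the right-hand side: it carries only the bare sign $(-1)^{|\mu/\mu^-|}$, with no compensating weight, and it must be shown to telescope against the Hall--Littlewood structure constants $g^{\alpha}_{\beta\gamma}(q)$, which themselves possess no known cancellation-free combinatorial model, and against the rational normalising factors $b_\lambda(q)/b_\mu(q)$ concealed in $P_{\lambda/\mu}$. Making that cancellation explicit --- ideally by a sign-reversing, weight-preserving involution on a $q$-weighted tableau model for the Hall--Littlewood coefficients, in the spirit of the involutions $\Phi$ and $\Psi$ of Sections~\ref{amcn} and~\ref{inv} --- is where I would expect essentially all of the work to lie.
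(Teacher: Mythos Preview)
This statement is a \emph{conjecture} in the paper, not a theorem: the paper gives no proof, and in fact explicitly remarks that ``the methods of this paper do \emph{not} seem to work for these three conjectures.'' You correctly recognised this and offered a plan rather than a proof, so there is no argument in the paper to compare yours against.

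Your proposed reduction to a structure-constant identity via the Hall--Littlewood inner product is a reasonable line of attack, and your diagnosis of the obstacle---that the bare sign $(-1)^{|\mu/\mu^-|}$ must cancel against the $q$-deformed Hall--Littlewood structure constants in a way that the involutions $\Phi$ and $\Psi$ of Sections~\ref{amcn}--\ref{inv} do not capture---matches the paper's own assessment. One small slip: for the $\mu=\emptyset$ base case you cite \cite[\S III, (3.10)]{mcd}, but the paper attributes the $P_\lambda\cdot e_r$ rule to \cite[\S III, (3.2)]{mcd}; (3.10) is the $P_\lambda\cdot P_r$ rule relevant to the third conjecture. This does not affect the structure of your plan, which in any case remains a sketch pending either a Hall--Littlewood analogue of the skew Littlewood--Richardson rule or the inductive cancellation you describe---both of which, as you say, would amount to substantial new results.
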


For $\mu = \emptyset$, this is \cite[\S III, (3.2)]{mcd}

\begin{conj}
 For partitions $\lambda,\mu$, $\mu \subseteq \lambda$, and $r \geq 0$ we have
 $$P_{\lambda/\mu} \cdot P_r = \frac 1{1-q} \sum (-1)^{|\mu/\mu^-|} \hs(\lambda^+/\lambda) \br((\mu/\mu^-)^c) P_{\lambda^+/\mu^-},$$
 where the sum on the right is over all $\lambda^+ \supseteq \lambda$, $\mu^- \subseteq \mu$ such that $\lambda^+/\lambda$ is a horizontal strip, $\mu/\mu^-$ is a broken border strip and $|\lambda^+/\lambda|+|\mu/\mu^-| = r$.
\end{conj}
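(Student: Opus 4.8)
The plan is to follow the strategy of Section~\ref{another}: derive the identity from the classical Pieri rule for Hall--Littlewood functions by a coproduct (Hopf-algebraic) argument, and then simplify the resulting expansion to the stated broken-ribbon form.

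\emph{Reformulation.} Multiplying through by $1-q$ and using $Q_r=(1-q)P_r$ for $r\ge1$, the conjecture becomes
$$P_{\lambda/\mu}\cdot Q_r \;=\; \sum_{\lambda^+,\mu^-}(-1)^{|\mu/\mu^-|}\,\hs(\lambda^+/\lambda)\,\br((\mu/\mu^-)^c)\,P_{\lambda^+/\mu^-},$$
the sum over $\lambda^+\supseteq\lambda$ with $\lambda^+/\lambda$ a horizontal strip, $\mu^-\subseteq\mu$ with $\mu/\mu^-$ a broken ribbon, and $|\lambda^+/\lambda|+|\mu/\mu^-|=r$. For $\mu=\emptyset$ this reads $P_\lambda Q_r=\sum_{\lambda^+}\hs(\lambda^+/\lambda)P_{\lambda^+}$, which is Macdonald's Pieri formula \cite[\S III]{mcd}; in particular $\hs(\lambda^+/\lambda)=\varphi_{\lambda^+/\lambda}(q)$, the classical Pieri coefficient, and this case is the input of the argument.

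\emph{Hopf-algebraic step.} Equip $\Lambda$ with the Hall--Littlewood scalar product, under which $\{P_\lambda\}$ and $\{Q_\lambda\}$ are dual bases; since this pairing still makes multiplication and comultiplication of $\Lambda$ mutually adjoint, $\Lambda$ is self-dual with respect to it, skewing is the adjoint operation ($Q_\mu^\perp P_\lambda=P_{\lambda/\mu}$), and duality together with cocommutativity give $\Delta Q_\mu=\sum_\nu Q_{\mu/\nu}\otimes Q_\nu$ and $\Delta Q_r=\sum_{a+b=r}Q_{(a)}\otimes Q_{(b)}$. Applying the Lam--Lauve--Sottile Leibniz identity $(a^\perp h)\cdot b=\sum a_{(1)}^\perp\bigl(h\cdot S(a_{(2)})^\perp b\bigr)$, valid in any cocommutative Hopf algebra acting on itself \cite{lls}, with $a=Q_\mu$, $h=P_\lambda$, $b=Q_r$ gives
$$P_{\lambda/\mu}\cdot Q_r \;=\; \sum_\nu\, Q_{\mu/\nu}^\perp\Bigl(P_\lambda\cdot\bigl(S(Q_\nu)\bigr)^\perp Q_r\Bigr).$$
Here $\bigl(S(Q_\nu)\bigr)^\perp Q_r$ is a skewing of the single function $Q_r$ and is computed explicitly from $\Delta Q_r$ and the antipode $S(Q_\nu)=(-1)^{|\nu|}\omega Q_\nu$; multiplication by $P_\lambda$ is resolved by the classical Pieri rule, producing horizontal strips $\lambda^+/\lambda$ with coefficient $\varphi=\hs$; and the outer skewing $Q_{\mu/\nu}^\perp$, combined with the antipodes, governs the $\mu$-side. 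What then remains is to show that, after collecting $q$-weights and summing over $\nu$, the $\mu$-side contribution to the coefficient of $P_{\lambda^+/\mu^-}$ collapses to exactly $(-1)^{|\mu/\mu^-|}\,\br((\mu/\mu^-)^c)$ and is supported on broken ribbons $\mu/\mu^-$; this is the analogue of Lemmas~\ref{lemma2}--\ref{lemma3} and of the binomial-theorem collapse in Section~\ref{another}.

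\emph{Main obstacle.} This last collapse is where the argument would live or die, and it is the reason a sign-reversing involution in the style of Section~\ref{inv} does not seem available: in the Schur case the antipode on $s_\nu$ is a signed permutation of the basis and the skew coefficients obey the clean pattern of Lemma~\ref{lemma2}, whereas $\omega$ acts on Hall--Littlewood $Q$-functions by a genuine $q\mapsto q^{-1}$ duality (one has $\omega Q_r(x;q)=(-q)^r Q_r(x;q^{-1})$, with messier formulas for $\ell(\nu)\ge2$), and the skew coefficients of $P_{\mu/\nu}$ are Hall polynomials with no simple closed form. Equivalently, pairing both sides of the reformulation against $Q_\sigma$ for all $\sigma$ and writing $P_{\alpha/\beta}=\sum_\gamma g^\alpha_{\beta\gamma}(q)P_\gamma$ reduces the conjecture to the scalar identity
$$\sum_\tau\varphi_{\sigma/\tau}(q)\,g^\lambda_{\mu\tau}(q) \;=\; \sum_{\lambda^+,\mu^-}(-1)^{|\mu/\mu^-|}\,\hs(\lambda^+/\lambda)(q)\,\br((\mu/\mu^-)^c)(q)\,g^{\lambda^+}_{\mu^-\sigma}(q),$$
a nontrivial $q$-binomial identity among Hall polynomials (true trivially for $\mu=\emptyset$ and for $r=0$); proving it in general — say by induction on $|\mu|$, peeling off one ribbon of $\mu/\mu^-$ at a time via the recursion for the $g^\lambda_{\mu\tau}$, or by a generating-function analysis of $\bigl(S(Q_\nu)\bigr)^\perp Q_r$ — is the crux of the matter and is what currently obstructs a complete proof.
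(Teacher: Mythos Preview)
The statement you are attempting to prove is labeled a \emph{Conjecture} in the paper, and the paper offers no proof of it. On the contrary, the paragraph immediately following the three conjectures in Section~\ref{hlp} explicitly states that ``the methods of this paper do \emph{not} seem to work for these three conjectures,'' and that the sign-reversing involutions of Sections~\ref{amcn} and~\ref{inv} ``cancel only the constant coefficients on both sides.'' So there is nothing on the paper's side to compare against; what you have written is an attempt to go beyond the paper.

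As an attempt, however, your write-up is not a proof either, and you are candid about this: the ``main obstacle'' paragraph correctly identifies that after the Hopf-algebraic Leibniz step the $\mu$-side must collapse to $(-1)^{|\mu/\mu^-|}\br((\mu/\mu^-)^c)$ supported on broken ribbons, and you do not carry this out. That collapse is exactly the content of the conjecture once one strips away the formal Hopf machinery, so the proposal as written is a reformulation rather than a proof. Two further cautions: first, the antipode formula you quote for Hall--Littlewood functions, $\omega Q_r(x;q)=(-q)^r Q_r(x;q^{-1})$, is not the standard one (the involution $\omega$ interchanges $P$ and $Q$ up to conjugation of shape, not up to $q\mapsto q^{-1}$), so the analysis of $(S(Q_\nu))^\perp Q_r$ would need to be redone carefully; second, the Lam--Lauve--Sottile identity you invoke requires the pairing to make $\Lambda$ a self-dual Hopf algebra, which is true for the Hall scalar product but needs a word of justification for the Hall--Littlewood product before you rely on it. In short, the strategy is reasonable and parallels Section~\ref{another}, but the substantive step remains entirely open, consistent with the paper's assessment.
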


For $\mu = \emptyset$, this is \cite[\S III, (3.10)]{mcd}.

\medskip

The methods of this paper do \emph{not} seem to work for these three conjectures. In other words, the sign-reversing involutions described in Sections \ref{amcn} and \ref{inv} cancels only the constant coefficients on both sides of conjectured equalities; positive powers of $q$ cancel in some other, mysterious manner.

\section{Final remarks} \label{final}

\subsection{} The motivation for this work was the open problem posed by Assaf and McNamara in \cite{slides}: to find a combinatorial proof of the skew Murnaghan-Nakayama rule (SMNR). Even though this paper provides a completely bijective proof of the skew \emph{quantum} Murnaghan-Nakayama rule, which obviously specializes to the non-quantum rule, Assaf-McNamara's problem remains open. Indeed, plugging $q = 1$ into SQMNR'', which is the identity we proved bijectively, gives $0$ on both sides. To get SMNR, we have to divide SQMNR'' by $1-q$ and then set $q = 1$.

\medskip

One possibility seems to be to instead find a bijective proof of SQMNR'. This would mean that one of the northeast corners of ribbons of $(\lambda^+/\lambda) \cup (\mu/\mu^-)$ would have to be colored white (or gray), perhaps the northeast corner of $\lambda^+/\lambda$ or the northeast corner of $\mu/\mu^-$. We were unable to find such a bijection. Even such a bijection, however, would not be enough to construct a bijection that proves SMNR. Indeed, plugging in $q = 1$ makes many of the skew tableaux weighted with $0$, and hence would not appear on the right-hand side of SMNR at all. We would want to avoid such $0$-weight objects in the sign-reversing involution.

\medskip

One possibility seems to to construct an involution-principle type of a bijection. Namely, given a skew semistandard Young tableau of shape $\lambda^+/\mu$, with $\lambda^+/\lambda$ a ribbon of size $r$, we would map it to a tableau of shape $\lambda^{++}/\mu^{--}$, where $\lambda^{++}/\lambda$ is a broken ribbon of size $r-1$ and $|\mu/\mu^{--}|$ a broken ribbon of size $1$, then map this to a tableau of shape $\lambda^{+++}/\mu^{---}$, where $\lambda^{+++}/\lambda$ is a broken ribbon of size $r-2$ and $|\mu/\mu^{--}|$ a broken ribbon of size $2$ etc., until finally reaching a tableau of shape $\mu/\mu^-$, where $\mu/\mu^-$ is a ribbon of size $r$. We leave this as motivation for further work.

\medskip

\subsection{} There is another natural $q$-version of power sum functions, defined by
$$\bp_r = \sum_{\tau \vdash r} q^{r-\ell(\tau)}(q-1)^{\ell(\tau)-1} m_\tau,$$
$$\bp_\mu = \bp_{\mu_1} \bp_{\mu_2} \cdots.$$
For example,
$$\bp_4 = q^3 m_4 + q^2(q-1) m_{31} + q^2(q-1) m_{22} + q(q-1)^2 m_{211} + (q-1)^3 m_{1111}$$
and
$$\bp_{22} = q^2 m_4 + 2q(q-1)m_{31} + (3q^2-2q+1) m_{22} + 2(q-1)(2q-1) m_{211} + 6 (q-1)^2 m_{1111}.$$

We have
$$p_r|_{q = 0} = (-1)^{r-1} m_{1^r}, \qquad \bp_r|_{q = 1} = m_r = p_r, \qquad \lim_{q \to \infty} \frac{\bp_r}{q^{r-1}} = \sum_{\tau \vdash r} m_\tau = s_r$$

\begin{thm}[SQMNR''']
 For partitions $\lambda,\mu$, $\mu \subseteq \lambda$, and $r \geq 0$, we have
 $$s_{\lambda/\mu} \cdot \bp_r = (-1)^{r-1} \sum_{\lambda^+,\mu^-} (-1)^{|\mu/\mu^-|}(-q)^{\wt(\lambda^+/\lambda)+\hgt(\mu/\mu^-)} (1-q)^{\rib(\lambda^+/\lambda)+\rib(\mu/\mu^-)-1} s_{\lambda^+/\mu^-},$$
 where the sum on the right is over $\lambda^+,\mu^-$ such that $\lambda^+/\lambda$ and $\mu/\mu^-$ are broken ribbons with $|\lambda^+/\lambda| + |\mu/\mu^-| = r$.
\end{thm}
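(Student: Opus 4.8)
\medskip

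The plan is to obtain SQMNR''' as a corollary of SQMNR' by applying the involution $\omega$ on the algebra of symmetric functions, exactly in the way CSPR was obtained from SPR. Recall that $\omega$ is a ring homomorphism, that $\omega^2=\mathrm{id}$, and that $\omega(s_{\nu/\rho})=s_{\nu^c/\rho^c}$ for all $\rho\subseteq\nu$.

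\medskip

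The one computational ingredient I would establish first is an analogue of Lemma \ref{lemma1} for $\bp_r$, namely
$$\bp_r=(-1)^{r-1}\sum_{k=1}^r(-q)^{k-1}s_{k,1^{r-k}}=\sum_{k=1}^r(-1)^{r-k}q^{k-1}s_{k,1^{r-k}}.$$
This is proved just as Lemma \ref{lemma1}: the number of semistandard Young tableaux of shape $(k,1^{r-k})$ and type $\lambda$ with $\ell(\lambda)=\ell$ is $\binom{\ell-1}{r-k}$, so the coefficient of $m_\lambda$ on the right is $\sum_k(-1)^{r-k}q^{k-1}\binom{\ell-1}{r-k}$, which after the substitution $i=r-k$ equals $q^{r-1}(1-q^{-1})^{\ell-1}=q^{r-\ell}(q-1)^{\ell-1}$, the coefficient of $m_\lambda$ in $\bp_r$. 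Now, since the conjugate of the hook $(k,1^{r-k})$ is the hook $(r-k+1,1^{k-1})$, applying $\omega$ to the identity of Lemma \ref{lemma1} and reindexing by $j=r-k+1$ gives $\omega(\wp_r)=\sum_{j=1}^r(-q)^{j-1}s_{j,1^{r-j}}$; comparing with the display above yields the key relation
$$\bp_r=(-1)^{r-1}\,\omega(\wp_r).$$

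\medskip

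With this in hand I would apply $\omega$ to both sides of SQMNR'. Its left side becomes $s_{\lambda^c/\mu^c}\cdot\omega(\wp_r)=(-1)^{r-1}s_{\lambda^c/\mu^c}\cdot\bp_r$. For its right side, $\omega(s_{\lambda^+/\mu^-})=s_{(\lambda^+)^c/(\mu^-)^c}$, so one must understand how the index set and the statistics transform under conjugation. A skew shape contains a $2\times2$ block if and only if its conjugate does, and the connected components of the conjugate are the transposes of the connected components of the original; hence conjugation is a bijection between the $\lambda^+\supseteq\lambda$ with $\lambda^+/\lambda$ a broken ribbon and the $\bar\lambda^+\supseteq\lambda^c$ with $\bar\lambda^+/\lambda^c$ a broken ribbon, and likewise on the $\mu$-side; moreover conjugation preserves the number of cells and $\rib(\cdot)$, and exchanges $\hgt(\cdot)$ with $\wt(\cdot)$. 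Writing $\bar\lambda=\lambda^c$, $\bar\mu=\mu^c$, $\bar\lambda^+=(\lambda^+)^c$, $\bar\mu^-=(\mu^-)^c$, the right side of SQMNR' is therefore carried to
$$\sum_{\bar\lambda^+,\bar\mu^-}(-1)^{|\bar\mu/\bar\mu^-|}(-q)^{\wt(\bar\lambda^+/\bar\lambda)+\hgt(\bar\mu/\bar\mu^-)}(1-q)^{\rib(\bar\lambda^+/\bar\lambda)+\rib(\bar\mu/\bar\mu^-)-1}s_{\bar\lambda^+/\bar\mu^-},$$
the sum being over $\bar\lambda^+,\bar\mu^-$ with $\bar\lambda^+/\bar\lambda$ and $\bar\mu/\bar\mu^-$ broken ribbons and $|\bar\lambda^+/\bar\lambda|+|\bar\mu/\bar\mu^-|=r$. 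Equating this with $(-1)^{r-1}s_{\bar\lambda/\bar\mu}\cdot\bp_r$ and multiplying through by $(-1)^{r-1}$ gives exactly the statement of SQMNR''' with $\bar\lambda,\bar\mu$ in the roles of $\lambda,\mu$; since every pair of partitions $\bar\mu\subseteq\bar\lambda$ arises as a pair of conjugates, the theorem follows.

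\medskip

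I do not anticipate a genuine obstacle here: this is a routine analogue of the derivation of CSPR from SPR. The only points requiring care are the bookkeeping of the overall sign $(-1)^{r-1}$ and the elementary verification of the behaviour of conjugation on broken ribbons and on the statistics $\hgt$, $\wt$, $\rib$. Alternatively, one could bypass $\omega$ and rerun the argument of Section \ref{another}, using the hook-Schur expansion of $\bp_r$ displayed above in place of Lemma \ref{lemma1} together with the $\hgt\leftrightarrow\wt$ variant of Lemma \ref{lemma2}; the route via $\omega$ is shorter.
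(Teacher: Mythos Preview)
Your proposal is correct. The paper does not supply an explicit proof of SQMNR'''; it is stated in the final remarks as an easy variant of SQMNR', and your derivation via the involution $\omega$ (together with the hook expansion $\bp_r=(-1)^{r-1}\sum_k(-q)^{k-1}s_{k,1^{r-k}}$ giving $\bp_r=(-1)^{r-1}\omega(\wp_r)$) is precisely the intended one-line argument, parallel to how the paper obtains CSPR from SPR.
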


For $q = 0$, this is the conjugate skew Pieri rule (multiplied by $(-1)^{r-1}$), for $q = 1$, this is again the skew Murnaghan-Nakayama rule, and if we divide by $q^{r-1}$ and send $q$ to $\infty$, we get the skew Pieri rule.

\medskip

\subsection{} Lam-Lauve-Sotille's skew Littlewood-Richardson rule is very general, but the computation of actual coefficients in the expansion, i.e.\ counting all standard Young tableaux of a given shape that rectify to a given tableau, is complicated in practice. In light of Section \ref{another}, our work can be seen as one possible answer to the following question. For what special shapes of $\lambda,\mu, \sigma,\tau$ can we actually compute the coefficients? SQMNR can be interpreted as saying that if $\tau = \empty$ and $\sigma$ is a hook, the coefficients are certain binomial coefficients, while SPR says that the coefficient is $\pm 1$ if $\tau = \empty$ and $\sigma = r$.

\medskip

It would be interesting to find other examples when the coefficients can be computed and yield elegant answers, both for Schur functions and for other Hopf algebras.

\end{document}